\documentclass[11pt]{amsart}
\usepackage[affil-it]{}
\title{On the poset and asymptotics of Tesler Matrices}
\pdfoutput=1

\author{Jason O'Neill}
\address{ University of California, Los Angeles}
 
\email[J.O'Neill]{jasonmoneill9@gmail.com}

\usepackage{fullpage,enumerate}
\usepackage{amssymb}
\usepackage{amsfonts,amssymb,amsthm,amsmath,mathrsfs,graphicx,wrapfig}
\usepackage{hyperref}
\usepackage{float} 
\usepackage{indentfirst}
\usepackage{stmaryrd}
\usepackage{lmodern}
\usepackage{xfrac}
\usepackage{color}
\usepackage[export]{adjustbox}

\usepackage[toc,page]{appendix}
\usepackage{lipsum}
\usepackage{cite}

\theoremstyle{plain}
\newtheorem{thm}{Theorem}[section]
\newtheorem{lemma}[thm]{Lemma}

\newtheorem{prop}[thm]{Proposition}
\newtheorem{corollary}[thm]{Corollary}
\newtheorem{conj}[thm]{Conjecture}

\theoremstyle{definition} 
\newtheorem{defn}[thm]{Definition} 
\newtheorem{question}[thm]{Question}
\newtheorem{ex}[thm]{Example}

\theoremstyle{remark} 
\newtheorem*{remark}{Remark} 

\definecolor{darkgreen}{rgb}{0,0.7,0}
\definecolor{purplish}{rgb}{0.5,0,0.8}
\definecolor{cobalt}{rgb}{0.0, 0.28, 0.67}
\definecolor{auburn}{rgb}{0.43, 0.21, 0.1}
\definecolor{red}{rgb}{.99,0,0}

%\definecolor{darkgreen}{rgb}{0,0.7,0}
%\definecolor{purplish}{rgb}{0.5,0,0.8}

\hypersetup{breaklinks,colorlinks,citecolor=red,linkcolor=purplish}

\begin{document}
\pagenumbering{gobble}

\begin{abstract}
Tesler matrices are certain integral matrices counted by the Kostant partition function and have appeared recently in Haglund's study of diagonal harmonics. In 2014, Drew Armstrong defined a poset on such matrices and conjectured that the characteristic polynomial of this poset is a power of $(q-1)$. We use a method of Hallam and Sagan to prove a stronger version of this conjecture for posets of a certain class of generalized Tesler matrices. We also study bounds for the number of Tesler matrices and how they compare to the number of parking functions, the dimension of the space of diagonal harmonics.
\end{abstract}

\maketitle 
\pagenumbering{arabic} 

\section{Introduction} 
Tesler matrices were introduced by Glenn Tesler to study Macdonald polynomials. They have been recently studied due to their relationship with diagonal harmonics and Haglund proved in \protect\cite{JHAG2} that the bigraded Hilbert series for the space of diagonal harmonics, denoted $DH_n$, is the sum over Tesler matrices of a bivariate weight.

\begin{equation} \label{tm_sum}
\text{Hilb}( DH_n;q,t) = \sum_{A} \text{wt}_{q,t} (A)
\end{equation}

\noindent where $A=(a_{i,j})$ is a Tesler matrix and the weight $\text{wt}_{q,t}(\cdot)$ is  
\begin{equation}\label{eq:wt_def}
\text{wt}_{q,t}( A ):= (-M)^{ | \{a_{i,j} >0\} | -n} \prod\limits_{a_{i,j}>0} [a_{i,j}]_{q,t} \text{ with  }   M= \frac{t-1}{q-1} \text{ and  }  [b]_{q,t}= \frac{q^b-t^b}{q-t}
\end{equation}

In \eqref{tm_sum}, the Hilbert series is over the space $DH_n$ which has dimension $(n+1)^{n-1}$. For more on this space, see \protect\cite {FBER,JHAG1}. Although the enumeration and asymptotics of Tesler matrices are not known, there are some nice product formulas when considering specializations of the alternating weight $\text{wt}_{q,t}(\cdot)$.  For instance, it was shown in \cite{AGHRS} that 
\begin{equation} \label{alter_result1}
q^{\binom{n}{2}} \sum_{A} \text{wt}_{q,q^{-1}} (A) = [n+1]_q^{n-1} 
\end{equation}
where $[n]_q=1+q+\cdots+q^{n-1}$. Furthermore, it was also shown in \protect\cite{PLEV} that 
\begin{equation} \label{alter_result2}
\sum_{A} \text{wt}_{q,0} (A) = [n]_q!
\end{equation}

Equations \eqref{alter_result1} and \eqref{alter_result2} show product formulas involving alternating sums of Tesler matrices. In this paper, we prove another such result that was initially conjectured by Armstrong in \protect\cite{DARM} by using a different alternating sum. He defines a \textit{poset} on the set of Tesler matrices which we will denote as $P(1^n)$ and refer to as the \textit{Tesler poset}. Recall that the \textbf{characteristic polynomial} on the poset $(P, \preceq)$, denoted $\chi(P;q)$, is a M\"{o}bius function weighted rank generating function. Hence, $$\chi(P;q)= \sum\limits_{A \in P} \mu(\hat{0},A) q^{\rho(P)-\rho(A)}$$ where we use the terminology and notation of \protect\cite[Ch.3]{RSTAN} for the  M\"{o}bius function $\mu(\cdot)$, the rank of an element $A \in P$ and of a poset $P$ as $\rho(A), \rho(P)$ respectively, and $\hat{0}$ for the unique least element. We will look at the characteristic polynomial of the Tesler poset, but we first need to give necessary definitions and conventions to discuss Tesler matrices in a precise manner. 

Let $U_n$ be the set of $n \times n$ upper-triangular matrices with non-negative integer entries. Given $A\in U_n$, where $A=(a_{i,j})$, we define the \textbf {hook sum} $h_k$ for $1 \leq k \leq n$ as follows:
$$h_k := ({a_{k,k}} + {a_{k,k+1}} + \cdots + {a_{k,n}} ) - ({a_{1,k}}  +  {a_{2,k}}  + \cdots  + {a_{k-1,k}}  ) $$
We define the \textbf {hook sum vector} as the $n$-dimensional vector $(h_1,\ldots,h_n)$. A \textbf{Tesler matrix} $A \in U_n$ is such that $h_k=1$ for all $1 \leq k \leq n$. 

\begin{ex}\label{ex:first}
The matrix below is a $3 \times 3$ Tesler matrix as $h_3=2-1-0=\bf{1}$, $h_2=1+1-1=\bf{1}$, and $h_1=0+0+1=\bf{1}$.

\[ \left( \begin{array}{ccc}
0 & 1 & 0 \\
&1 & 1\\
& &2\end{array} \right)\] 
\end{ex}

\medskip  
We denote the number of matrices in $U_n$ with a hook sum vector of $(\alpha_1, \ldots ,\alpha_n)$ as $T(\alpha_1, \ldots ,\alpha_n)$ and the set of such matrices as  $\mathcal{T}(\alpha_1, \ldots ,\alpha_n)$ and refer to these as \textbf{generalized Tesler matrices}. We often use short hand of $T(1^n)$ and $\mathcal{T}(1^n)$ for the number of and set of Tesler matrices respectively.

\begin{conj} [Armstrong \protect\cite{DARM}] \label{conj:armstrong}
Let $P(1^n)$ be the poset on Tesler matrices $\mathcal{T}(1^n)$, then $$\chi( P(1^n);q) = (q-1)^{ \binom{n}{2}} $$
\end{conj}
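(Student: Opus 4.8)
The plan is to realise Armstrong's poset $P(1^n)$ as a quotient of a product of Boolean lattices and then apply the Hallam--Sagan method to that quotient. First I would set up the standard flow description of Tesler matrices: a matrix $A=(a_{ij})\in\mathcal T(1^n)$ records a routing on the acyclic digraph with vertices $1,\dots,n$ together with one extra sink vertex, in which vertex $i$ emits a single unit, the entry $a_{ij}$ (for $i<j$) is the flow along the arc $i\to j$, and $a_{ii}$ is the flow from $i$ straight to the sink; the hook-sum condition $h_k=1$ is exactly conservation at vertex $k$. In this language $\hat 0$ is the direct routing $I_n$, Armstrong's cover relations are the elementary ``lengthen one unit's path by inserting one intermediate stop'' moves, the rank of $A$ is $\bigl(\sum_{i\le j}a_{ij}\bigr)-n$ (total flow minus $n$), and the unique top element is the routing in which every unit takes its longest available path; hence $\rho\bigl(P(1^n)\bigr)=\binom{n+1}{2}-n=\binom n2$, which already matches the exponent in Conjecture~\ref{conj:armstrong}.

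The structural input is that a path from vertex $k$ to the sink is determined by its set of intermediate stops, an arbitrary subset of $\{k+1,\dots,n\}$, so the set of routings ordered by refinement of paths is the product of Boolean lattices $Q:=\prod_{k=1}^{n}B_{n-k}$, and the map $\pi\colon Q\twoheadrightarrow P(1^n)$ sending a tuple of paths to its arc-flow matrix is an order-preserving and rank-preserving surjection. It is genuinely many-to-one: for $n=3$ the routings ``$1\to2\to3\to\text{sink}$, $2\to\text{sink}$'' and ``$1\to2\to\text{sink}$, $2\to3\to\text{sink}$'' give the same matrix, so $|P(1^3)|=7<8=|Q|$. Since $\chi(Q;q)=\prod_{k=1}^{n}\chi(B_{n-k};q)=\prod_{k=1}^{n}(q-1)^{\,n-k}=(q-1)^{\binom n2}$, the conjecture reduces to showing that the characteristic polynomial is unchanged under the quotient $P(1^n)=Q/{\sim}$, where $x\sim y$ iff $\pi(x)=\pi(y)$.

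This is where the Hallam--Sagan method does the work: one verifies that $\sim$ satisfies the hypotheses under which their argument shows $\chi$ descends to the quotient --- that the relation induced on $\sim$-classes is a partial order with $\hat0$ and $\hat1$ each forming a singleton class, and, the real content, that each fibre $\pi^{-1}(A)$ is combinatorially trivial in the sense needed to transport the M\"obius recursion faithfully from $Q$. Concretely I would single out a canonical representative in each fibre (say the lexicographically smallest path-tuple with the prescribed arc flows), check that the resulting section is order-preserving, and track covering relations through $\pi$ so that $\sum_{\rho(A)=i}\mu_{P}(\hat 0,A)$ equals the coefficient of $q^{\binom n2-i}$ in $(q-1)^{\binom n2}$ for every $i$; the $n=3$ case is a useful sanity check, since there $\mu_P(\hat0,\cdot)$ is not identically $\pm1$ (one value equals $2$) yet the rank sums still come out $1,-3,3,-1$.

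The same argument run with several indistinguishable sources at a vertex should give the stronger statement $\chi\bigl(P(\mathbf a);q\bigr)=(q-1)^{e(\mathbf a)}$ for $\mathbf a$ in the class for which the routing poset is still (a quotient of) a product of Boolean lattices, with $e(\mathbf a)=\sum_k a_k(n-k)$, specialising to Conjecture~\ref{conj:armstrong} at $\mathbf a=(1^n)$. I expect the quotient step to be the main obstacle: pinning down the precise Hallam--Sagan hypothesis that forces $\pi$ to preserve $\chi$, and then checking it on the fibres of $\pi$. The fibres are not intervals --- already for $n=3$ one fibre is a two-element antichain, and in $Q$ a single cover of $P(1^n)$ can be covered twice --- so the cleanest ``every class is an interval'' form of a quotient theorem is unavailable, and the effort is in showing the fibres are nonetheless acyclic enough to push the M\"obius recursion through.
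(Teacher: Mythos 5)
Your setup is sound and lives in the same circle of ideas as the paper: the routing model, the rank formula $\rho(A)=\sum_{i\le j}a_{ij}-n$, the identification of each unit's possible paths with a Boolean lattice, and the computation $\chi\bigl(\prod_{k=1}^{n}B_{n-k};q\bigr)=(q-1)^{\binom{n}{2}}$ are all correct, and your product $Q=\prod_{k}B_{n-k}$ with the arc-flow surjection $\pi$ is precisely what the paper's construction becomes after unrolling: the paper adjoins one Boolean factor at a time (Lemma \ref{inductive_step_method}), each factor being the path poset $\mathcal{T}(1,0^{r-1})$ of one emitted unit, and quotients by the additive relation of Definition \ref{defn:equiv}.

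The gap is that you never verify the Hallam--Sagan hypotheses for the global fibre relation, and that verification is the entire content of the theorem. For your one-shot quotient the summation condition amounts to: for every Tesler matrix $A\neq\hat0$, the alternating sum $\sum(-1)^{\rho(y)}$ over all routings $y\in Q$ with $\pi(y)\le A$ vanishes, i.e.\ the sum over the lower order ideal $\pi^{-1}(\Lambda(A))\subseteq Q$, where $\Lambda(A)$ is the lower ideal of $A$ in $P(1^n)$. Such an ideal is neither an interval nor a subproduct of $Q$ (as you note, the fibres are antichains), and a general lower ideal of a product of Boolean lattices has no reason to have vanishing alternating sum; establishing the vanishing here requires global control of the order of $P(1^n)$ pulled back to routings, and neither the lexicographically-least section nor ``tracking covers through $\pi$'' supplies a mechanism for that (a section cannot see the M\"obius recursion, and a single cover of $P(1^n)$ can have several lifts). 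This is exactly what the paper's induction is designed to avoid: at each step it quotients a product of only two posets, $P(\alpha)\times B_{r-1}$, and there the summation condition is proved via the coordinate-isolation Lemmas \ref{first_sum_lemma} and \ref{second_sum_lemma}, which have no analogue for the fibre relation on the full product. Note also that the homogeneity condition in the form quoted before Lemma \ref{HS_method} already fails for your relation at $n=3$: the fibre over the matrix of Example \ref{ex:first} is $\{(\{2,3\},\emptyset),(\{2\},\{3\})\}$, its class lies above the class of the matrix with $a_{13}=a_{22}=1$, $a_{33}=2$, whose unique routing $(\{3\},\emptyset)$ is below the first representative but not the second; so you cannot cite the quotient lemma as a black box and must work with the exact hypotheses of Hallam--Sagan. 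Until you either prove the vanishing of these alternating sums over $\pi^{-1}(\Lambda(A))$, or reduce to one two-factor quotient at a time as the paper does, the argument is incomplete at precisely the step where the theorem lives.
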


The method that we use in this paper extends to the larger class of generalized Tesler matrices with binary hook sums and settles Armstrong's conjecture with a simple calculation. 

\begin{thm} \label{generalized_armstrong_conj}
Let $\alpha =(\alpha_{n-1}, \ldots, \alpha_0) \in \{0,1\}^n$ and $P(\alpha)$ be the poset on generalized Tesler matrices $\mathcal{T}(\alpha)$. Then, letting $w(\alpha)= \sum_{i=0}^{n-1} i\cdot \alpha_i$, we have that $$  \chi(P(\alpha);q) = (q-1)^{w(\alpha)} $$  
\end{thm}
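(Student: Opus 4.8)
The plan is to fit $P(\alpha)$ into the Hallam--Sagan framework and reduce the identity to a short induction, with essentially all the genuine content concentrated in one structural verification. First I would pin down the grading. From the hook-sum condition one reads off that the unique minimal element is the diagonal matrix $\hat 0_\alpha = \operatorname{diag}(\alpha)$ (when all strictly-upper entries vanish, $h_k = a_{k,k}$), and that Armstrong's cover relations each change the quantity $\rho(A) := \sum_{i<j} a_{i,j}$ by exactly one, so $\rho$ is the rank function of $P(\alpha)$. To evaluate $\rho(P(\alpha)) = \max_A \rho(A)$ I would use the standard description of $A \in \mathcal{T}(\alpha)$ as a nonnegative integer flow on the acyclic network with nodes $1 < 2 < \cdots < n < n{+}1$: node $i$ is a source of weight $h_i \in \{0,1\}$, node $n{+}1$ the sink, edge $i \to j$ ($i < j \le n$) carries $a_{i,j}$, and edge $i \to n{+}1$ carries $a_{i,i}$. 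Then $\rho(A)$ counts traversals of strictly-upper edges, and a unit of flow issued at source $i$ can traverse at most the edges along $i \to i{+}1 \to \cdots \to n \to n{+}1$, i.e.\ at most $n - i$ of them. Hence $\rho(P(\alpha)) = \sum_{i :\, h_i = 1}(n - i) = w(\alpha)$, the maximum being attained when every source routes along this longest path.

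With the rank established, the heart of the argument is a recursion peeling $P(\alpha)$ one hook at a time; I would induct on $n$, the base case $n = 1$ (or $\alpha = 0\cdots0$) being the one-element poset with $\chi \equiv 1 = (q-1)^0$. Let $\alpha_{n-1}$ be the coordinate of $\alpha$ corresponding to the first hook $h_1$, and put $\alpha' = (\alpha_{n-2}, \dots, \alpha_0)$. If $\alpha_{n-1} = 0$, then $h_1 = a_{1,1} + \cdots + a_{1,n} = 0$ forces the whole first row of every $A \in \mathcal{T}(\alpha)$ to vanish; deleting the first row and column is then a rank-preserving poset isomorphism $P(\alpha) \cong P(\alpha')$, and since $w(\alpha) = w(\alpha')$ we finish by induction. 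If $\alpha_{n-1} = 1$, the goal is to split off a factor $(q-1)^{n-1}$: here I would invoke Hallam and Sagan's method for factoring characteristic polynomials, which under suitable hypotheses decomposes $P(\alpha)$ via a homogeneous equivalence relation $\theta$ (compatible with $\rho$, with convex and order-coherent blocks) whose quotient $P(\alpha)/\theta$ is again a poset of this form on $n-1$ nodes, yielding $\chi(P(\alpha);q) = (q-1)^{n-1}\,\chi\!\left(P(\alpha)/\theta;q\right)$. Roughly, a $\theta$-block should collect the generalized Tesler matrices that agree once the single unit of flow generated at node $1$ has been absorbed, so that passing to the quotient forgets node $1$ while a block records how that unit is routed out of node $1$ along the $n-1$ available strictly-upper edges; combined with induction this gives $\chi(P(\alpha);q) = (q-1)^{n-1}(q-1)^{w(\alpha')} = (q-1)^{w(\alpha)}$.

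The crux --- and essentially the whole proof --- is the verification that such a $\theta$ genuinely satisfies the hypotheses of the Hallam--Sagan reduction. Two points require care. First, order-coherence: one must check that $x \preceq y$ between elements of distinct blocks is well-defined on blocks and that each block is a rank-$(n-1)$ interval contributing exactly $(q-1)^{n-1}$. Second, and more subtle, passing from $\mathcal{T}(\alpha)$ to $\mathcal{T}(\alpha')$ naively (simply deleting a row and column after routing) can push a coordinate of the hook-sum vector from $1$ to $2$ and leave the binary class; the equivalence $\theta$ must be arranged precisely so that these degenerate matrices get identified with others and the quotient stays within the family $\{P(\beta) : \beta \in \{0,1\}^{n-1}\}$ to which the induction applies. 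Once these structural facts are secured, the recursion $\chi(P(\alpha);q) = (q-1)^{(n-1)\alpha_{n-1}}\,\chi(P(\alpha');q)$ unwinds immediately to $\chi(P(\alpha);q) = (q-1)^{w(\alpha)}$; specializing to $\alpha = 1^n$, for which $w(1^n) = \sum_{i=0}^{n-1} i = \binom{n}{2}$, recovers Conjecture~\ref{conj:armstrong}.
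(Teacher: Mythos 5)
Your inductive skeleton is sound and is essentially the paper's: stripping a leading $0$ is free (the first row is forced to vanish), and stripping a leading $1$ should contribute a factor $(q-1)^{n-1}$, which is exactly what the paper proves (Corollary \ref{cor:leadingbinword}, and in the interior-position form Lemma \ref{inductive_step_method}, iterated from the base case $P(1,0^{n-1})\cong B_{n-1}$ of Proposition \ref{boolean_lattice_bijection}). The gap is in how you propose to obtain that factor. Hallam--Sagan (Lemma \ref{HS_method}) does not work by quotienting $P(\alpha)$ itself by an equivalence $\theta$ with ``convex, order-coherent blocks'' each a rank-$(n-1)$ interval contributing $(q-1)^{n-1}$; it goes in the opposite direction: one forms the \emph{product} $P(\alpha')\times B_{n-1}$, quotients \emph{it} by a homogeneous, rank-preserving relation satisfying the summation condition \eqref{summation_eq}, and shows the quotient is isomorphic to $P(\alpha)$. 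The factor $(q-1)^{n-1}$ comes from $\chi(B_{n-1};q)$, not from any uniform block structure inside $P(\alpha)$ --- and indeed no such structure exists: already for $\alpha=(1,1,1)$ the poset $P(1^3)$ has $7$ elements, so it cannot be partitioned into blocks that are rank-$2$ diamond intervals each contributing $(q-1)^2$; what actually happens is that the $8$-element product $P(1,1)\times B_{2}$ maps onto the $7$ elements of $P(1^3)$ with exactly one equivalence class of size $2$ and the rest singletons. Your second worry --- that ``forgetting node $1$'' can push a hook sum to $2$ and leave the binary family --- is an artifact of this inverted setup: in the correct construction one never deletes node $1$, but instead maps $\mathcal{T}(\alpha')$ into $\mathcal{T}(\alpha)$ by adding a matrix $S\in\mathcal{T}(1,0^{n-1})\cong B_{n-1}$ placed appropriately, and identifies $(A_1,S_1)\sim(A_2,S_2)$ exactly when $A_1+S_1=A_2+S_2$ (Definition \ref{defn:equiv}), so the question never arises.

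Beyond the misstatement of the mechanism, the substantive content is left unproved. You correctly flag that ``essentially the whole proof'' is the structural verification, but you neither define the equivalence precisely nor verify the hypotheses that make Lemma \ref{HS_method} applicable: that the images of the maps $\phi_S$ cover $\mathcal{T}(\alpha)$ (surjectivity of the quotient identification), that the resulting bijection is order-preserving in both directions, and above all the summation condition, which is the delicate point and is handled in the paper by Lemmas \ref{first_sum_lemma} and \ref{second_sum_lemma} (no element of $P(\alpha')$ can be ``first coordinate isolated'' while some element of $B_{n-1}$ is ``second coordinate isolated''). The conditions you propose to check instead (rank-$(n-1)$ interval blocks, order-coherence of a partition of $P(\alpha)$) are not the hypotheses of the theorem and, as noted, cannot hold. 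So as written the key step $\chi(P(1,\alpha');q)=(q-1)^{n-1}\chi(P(\alpha');q)$ is asserted rather than proved; once it is established by the product-and-quotient argument of the paper, your peeling induction does indeed yield $(q-1)^{w(\alpha)}$, and the rank computation and the $\alpha_{n-1}=0$ reduction in your first paragraph are fine.
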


To see why this theorem settles Armstrong's conjecture, note that $w(1,1,\ldots,1) = \binom{n}{2}$. In addition, this theorem is also consistent with a well known result on the Boolean lattice (see Prop. \ref{boolean_lattice_bijection}). In order to prove this theorem, we will adapt a method \protect\cite{HSAG} of Joshua Hallam and Bruce Sagan. We also show that certain powers of $(q-1)$ divide the characteristic polynomial of the Tesler poset corresponding to a hook sum vector with either a trailing or a leading binary word. (See Corollary \ref{cor:binarywords}.)

\medskip
Although Tesler matrices have been connected in \protect\cite{JHAG1} to diagonal harmonics via a bivariate weight and in \protect\cite{MMR} were shown to be a solution to the {\textit Kostant partition function}, there are still many enumerative questions on Tesler matrices that have yet to be answered. For example, at the start of this paper, the previously known bounds for $T(1^n)$ were $n! \leq T(1^n) \leq 2^{\binom{n}{2}}$ \protect\cite[\S 4]{MMR}. In Section \ref{sec:armpoly}, through simple observations of an enumerative tool that we call the {\it Armstrong polynomial}, we are able to improve the lower bound such that $$ T(1^n) \geq (2n-3)!! $$ In addition, we can similarly get a tighter upper bound. There are also interesting enumerative results when considering generalized Tesler matrices. Let $C_i=\frac{1}{i+1}\binom{2n}{n}\sim \sfrac{4^n}{\left(\sqrt\pi n^{\frac{3}{2}}\right)}$ be the $i$th Catalan number. Zeilberger \protect\cite{DZE} showed that $$T(1,2,\ldots, n) =\prod\limits_{i=1}^n {C_i}$$ Thus $ T(1,2,\ldots, n)= e^{\Theta(n^2)}$, which motivated the following question.   

\begin{question}[Pak]\label{question:pak} True or False: The number of Tesler matrices have the following asymptotics
$$T(1^n)= e^{\Theta(n^2)}$$
\end{question}

\begin{remark}
Note that even the improved lower bound needs to be significantly improved further to give an affirmative answer to Question \ref{question:pak}. However, the existing data in the OEIS \href{http://oeis.org/A008608}{A008608} suggests that $log(T(1^n)) = O(n^{1.6})$ as noted in \protect\cite{AHMJP}.
\end{remark}

We denote the hook sum vector $(1,1,\ldots,1,0,0, \ldots, 0)$ with $k$ $1$'s and $(n-k)$ $0$'s as $(1^k,0^{n-k})$. This set of generalized Tesler matrices have previously been studied in \protect\cite{HIO} and we analyze the set $\mathcal{T}(1^k,0^{n-k})$ in Section \ref{sec:gen_tm} to get some insight into Tesler matrices. We will show that $$ T(1^k,0^{n-k}) \geq (k+1)^{n-1} \text{  for sufficiently large $n$} $$ 

This leads us to conjecture that the number of Tesler matrices can eventually be bounded below by the dimension of $DH_n$, which is $(n+1)^{n-1}$ (also the number of parking functions of size $n$). We also find generating functions $T_k(x)$ for particular values of $k$. When $k=1$, $T(1,0^{n-1}) = 2^{n-1}$, so this generating function is trivial. However, when $k=2$ we find the generating function in Proposition \ref{prop:genfun2} \protect\cite{HIO}. While the case where $k=3$ is still open, these generating functions could provide insight about a generating function for the number of Tesler matrices.
 
\medskip
\noindent \textbf{Outline:} 
 In Section \ref{sec:Background Information}, we will highlight some previous results and methods that will be pertinent in this paper. Then, in Section \ref{sec:Tesler_poset}, we introduce the Tesler poset, some its properties, and show that a specific hook sum vector yields a poset which is isomorphic to the well-known Boolean lattice that was initially noticed by Alejandro H. Morales in \protect\cite{AHM}. Using these results, we will then prove Theorem \ref{generalized_armstrong_conj} in Section \ref{sec:mainthm} and explore some of its corollaries. Finally, in the last two sections, we will explore asymptotics and other enumerative questions regarding generalized Tesler matrices and also explore the significance of settling Conjecture \ref{conj:armstrong} in respect to the asymptotics of Tesler matrices. 

\section{Background} \label{sec:Background Information}

\subsection{Tesler Generating Algorithm} \label{tesler_generating_process}
We will discuss a method for generating generalized Tesler matrices as given by Drew Armstrong \protect\cite{DARM}. Fix a generalized Tesler matrix $A=(a_{ij})$ of size $n$ with a hook sum vector $(\alpha_1,\ldots,\alpha_n)$. Then, consider the main-diagonal entries of $A$ as an $n$-tuple $(d_1,\ldots,d_n)$ with $d_i:=a_{ii}$. We will create a generalized Tesler matrix $A'=({a_{ij}}')$ with hook sum vector $(\alpha_1,\ldots,\alpha_n, \alpha_{n+1})$ by first constructing its main-diagonal $({d_1}',\ldots,{d_{n+1}}')$. For each entry $d_i$ in the $n$-tuple, we take that entry and replace it with ${d_i}'$ where $0\leq {d_i}' \leq d_i$ and set ${a_{n+1,i}}'=d_i-{d_i}'$ such that the $i$th hook sum remains unchanged. Then, let ${d_{n+1}}'$ be such that the sum of our newly constructed main-diagonal $(n+1)$-tuple adds up to $\sum\limits_{k=1}^{n+1} \alpha_k$ and let the other entries in the matrix remain unchanged. 
 
\begin{ex}
Our initial Tesler matrix has a main-diagonal tuple $(0,1,2)$. The algorithm generates six $(1 \cdot 2 \cdot 3)$ main-diagonal $4$-tuples and hence six Tesler matrices of size $4$.

\begin{figure}[H]
\centering
\includegraphics[width=100mm, scale=.75] {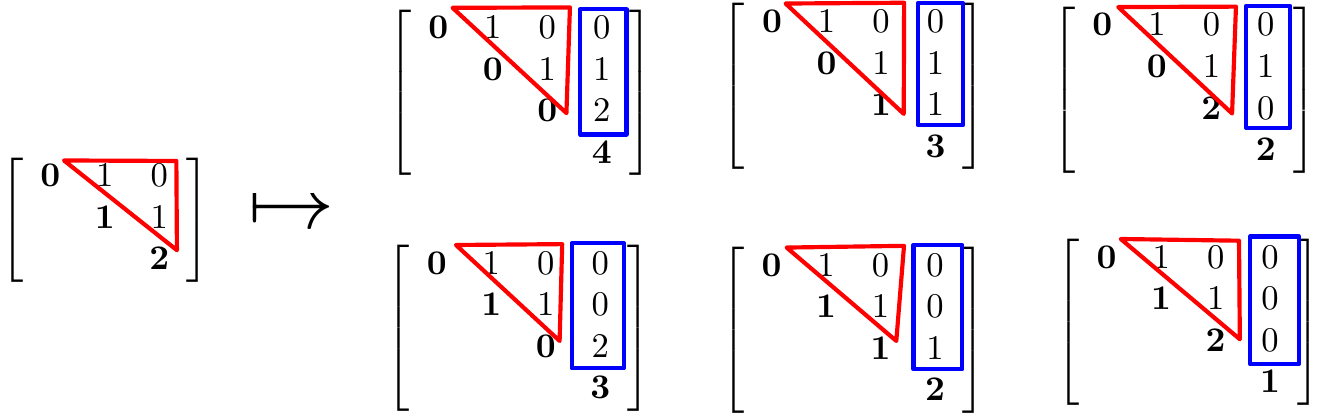}
\caption{Note that the red triangle is constant and that the blue rectangle corresponds to what was subtracted from the original main-diagonal.}
\label{Armstrong_generating)process.pdf}
\end{figure} 

\end{ex}

\begin{prop}
Iterating the Tesler Generating Algorithm yields all Tesler matrices. 
\end{prop}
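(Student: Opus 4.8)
The plan is to prove this by induction on the size $n$, using the fact that the generating step is essentially reversible: each $A'\in\mathcal{T}(1^{n+1})$ should admit a unique \emph{parent} $A\in\mathcal{T}(1^n)$ from which a single run of the algorithm can output $A'$. The base case is immediate because $\mathcal{T}(1^1)=\{(1)\}$ and $(1)$ is the matrix we start from, so all of the content lies in the inductive step.

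For the inductive step I would fix $A'=(a'_{ij})\in\mathcal{T}(1^{n+1})$ and build its parent $A=(a_{ij})\in U_n$ by keeping the off-diagonal entries, $a_{ij}=a'_{ij}$ for $i<j\le n$, and absorbing the freshly created entries back into the main diagonal: each $a_{ii}$ is obtained from $a'_{ii}$ by adding back the weight the algorithm split off from $d_i$, so that, in the notation of Section \ref{tesler_generating_process}, $A$ has main diagonal $(d_1,\dots,d_n)$ and the choices $d'_i:=a'_{ii}$ (together with the then-forced value of $d'_{n+1}$) regenerate $A'$. The first thing to check is that $A\in\mathcal{T}(1^n)$: every entry of $A$ is a non-negative integer by construction, and since the algorithm alters row $i$ only by moving weight off the diagonal into the new index while leaving the rest of that row and all of column $i$ untouched, $h_k(A)=h_k(A')=1$ for every $k\le n$.

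Next I would check that running the algorithm on $A$ can actually return $A'$. The choice $d'_i=a'_{ii}$ is legal because the split-off weight is non-negative, so $0\le a'_{ii}\le a_{ii}=d_i$; and by construction this choice reproduces the new entries of $A'$. It then remains to see that the value of $d'_{n+1}$ forced by the requirement that the new main diagonal sum to $\sum_{k=1}^{n+1}1=n+1$ equals $a'_{n+1,n+1}$. For this I would invoke the standard identity that the diagonal entries of any generalized Tesler matrix sum to the sum of its hook sums: applied to $A'$ this gives $\sum_i a'_{ii}=n+1$, hence $a'_{n+1,n+1}=(n+1)-\sum_{i=1}^n d'_i$, which is exactly the forced value. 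With $A$ now identified as an element of $\mathcal{T}(1^n)$ that maps to $A'$, the inductive hypothesis that $A$ is reachable finishes the argument.

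Everything here is bookkeeping, so I do not anticipate a serious obstacle; the two points that need a little care are (i) confirming that collapsing the new entries back into the diagonal keeps all entries non-negative --- which is precisely what makes the constraint $0\le d'_i\le d_i$ satisfiable when the algorithm is run forward --- and (ii) that this collapse preserves each of the first $n$ hook sums, for which one uses that the algorithm touches only the diagonal and new-index portion of each row, plus the identity $\sum_k h_k(A)=\sum_i a_{ii}$ to pin down the new corner entry. It is also worth stating explicitly that the forward step may append a hook sum other than $1$, so the statement really concerns iterating the algorithm with the choice that keeps the hook sum vector equal to $(1,\dots,1)$ throughout.
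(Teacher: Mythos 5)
Your proposal is correct and takes essentially the same route as the paper: the paper's proof considers a minimal-size Tesler matrix not produced by the algorithm and ``reverses the process,'' which is exactly the parent construction you make explicit (keep the off-diagonal entries, fold $a'_{i,n+1}$ back into the diagonal, and note the forced value of $d'_{n+1}$). Your write-up simply supplies the bookkeeping --- non-negativity, preservation of the first $n$ hook sums, and the identity that the diagonal entries sum to the sum of the hook sums --- that the paper leaves implicit.
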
 

\begin{proof}
Seeking a contradiction, suppose that there exists a least integer $z$ corresponding to the size of at least one Tesler matrix $A$ that is not generated by this process. By reversing this process, we can then create a Tesler matrix of a smaller size $(z-1)$ that must be generated from this process as it is smaller in size than A. We could then generate $A$ from a matrix that is generated through this process. Hence, this process generates all of the Tesler matrices. 
\end{proof}

Fixing $A=(a_{ij})$ with hook sum vector $(\alpha_1,\ldots, \alpha_n)$, we now consider the number of generalized Tesler matrices of size $(n+1)$ that $A$ generates. 
\begin{defn}
Let $A=(a_{ij})$ be an $n \times n$ generalized Tesler matrix, then let $d_i:=a_{ii}$ be the $i$th main-diagonal entry. We define the \textbf{diagonal product} of $A$, or $\text{dpro}(A)$, as follows: 
$$\text{dpro}(A) = \prod\limits_{i=1}^n {(d_i +1)}$$
\end{defn}
 
\noindent Note that $$T(\alpha_1,\ldots, \alpha_n, \alpha_{n+1})=  \sum\limits_{A \in \mathcal{T}(\alpha_1,\ldots, \alpha_n)} {\text{dpro}(A)}$$

\subsection{Integral Flow Representation} \label{sec:ifr}
A Tesler matrix of size $n$ can also be represented as an integral flow on the complete directed graph on $(n+1)$ vertices with net flows equal to $(1,1,\ldots, 1, -n)$ \protect\cite{MMR}. Given any generalized Tesler matrix with hook sum vector $(\alpha_1, \ldots ,\alpha_n)$, we can represent it as an integral flow with net flows equal to $(\alpha_1, \ldots ,\alpha_n, -\sum\limits_{i=1}^n {\alpha_i} )$.

The bijection in \protect\cite{MMR} shows that these are equivalent notions. They consider the main-diagonal entry in row $i$ to be the flow sent from the {\it ith} vertex to the $(n+1)st$ vertex, which is the rightmost vertex. Then for each entry such that $i<j$, $a_{ij}$ corresponds to the flow between the {\it ith} and {\it jth} vertices. See Figure \ref{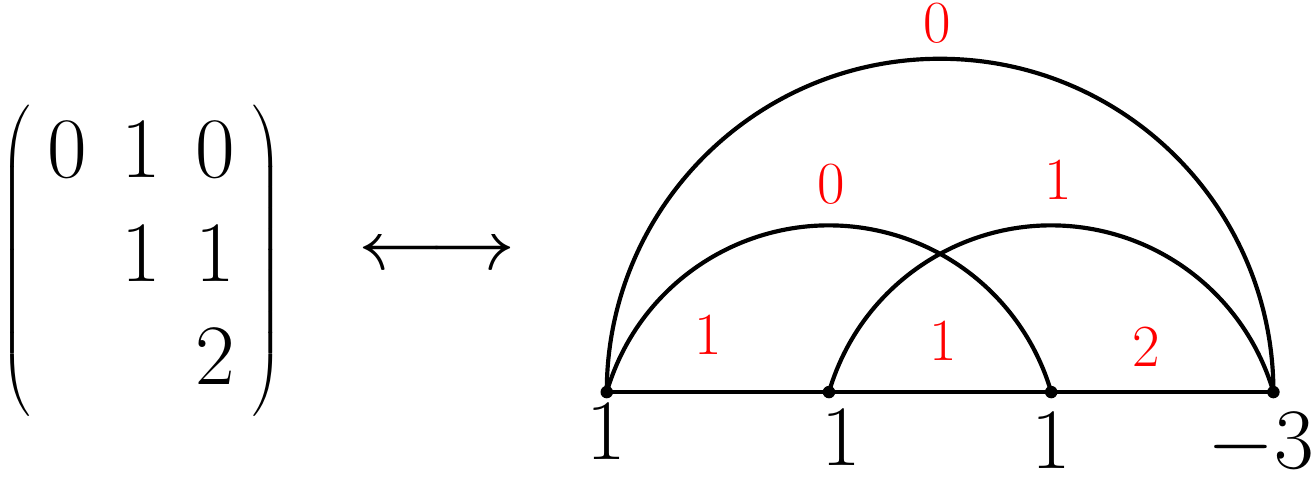} below for an example of this bijection. 

\begin{figure}[H]
\centering
\includegraphics[width=50mm, scale=.25] {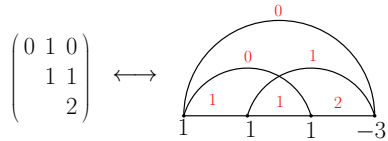}
\caption{Net flows depicted underneathe the complete directed graph}
\label{tesler_polytope_ex.pdf}
\end{figure}

\subsection{Method of Hallam and Sagan} \label{subsec:hallam_sagan} 

Sagan \protect\cite{BSAG} has previously done work on why the characteristic polynomial of a poset factors.  Recently, Sagan and Hallam \protect\cite{HSAG} have introduced a method for showing that the characteristic polynomial of a poset factors. We will apply Hallam and Sagan's method to the Boolean lattice to prove Theorem \ref{generalized_armstrong_conj}. Their method is to take ranked posets $P_1,\ldots, P_k$ for which the characteristic polynomial is known, and to consider $Q= P_1 \times \cdots \times P_k$. We recall the following facts regarding the characteristic polynomial of posets. 

\medskip
1) If $P \cong P'$, then $\chi(P;q) = \chi(P';q)$ 
 
2) $\chi(P_1 \times P_2; q) = \chi(P_1;q) \cdot \chi(P_2;q)$
 
\medskip
\noindent Then, they define an equivalence relation $(\sim)$ to identify elements in $Q$ such that  $\sfrac{Q}{\sim} \cong P$. The process of identifying elements leaves the characteristic polynomial unchanged if the equivalence relation satisfies certain conditions. First, they say that their equivalence relation is \textbf{homogeneous} if

\medskip
1) $\hat{0} \in Q$ is in an equivalence class by itself

2) If $X \geq Y$ in $\sfrac {Q}{\sim}$, then for all $x \in X$, there is a $y \in Y$ such that $x \geq y$. 
\medskip
\newline
Next, we need $\sim$ to preserve rank so that if $x \sim y$, then $\rho(x) = \rho(y).$ Lastly, letting $\mu(\cdot)$ be the M\"{o}bius function on $\sfrac {Q}{\sim}$ and considering any nonzero $X \in \sfrac {Q}{\sim}$ with lower order ideal $L(X) \subseteq \sfrac {Q}{\sim}$, 
\begin{equation} \label{summation_eq}
\sum\limits_{Y \in L(X)} \mu(\hat{0}, Y) = 0 
\end{equation}
Hallam and Sagan refer to \eqref{summation_eq} as the summation condition and we adopt this same terminology. 

\begin{lemma}[Hallam and Sagan \protect\cite{HSAG}] \label{HS_method}
Let $P, Q$ be posets as above and $\sim$ be an equivalence relation on $Q$ which is homogeneous, preserves rank and satisfies the summation condition. Then  $$\chi(\sfrac {Q}{\sim} ; q) = \chi(Q ;q)$$ 
\end{lemma}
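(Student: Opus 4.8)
The plan is to show that passing from $Q$ to $\sfrac{Q}{\sim}$ preserves the characteristic polynomial by checking, one structure at a time, that the three hypotheses (homogeneity, rank-preservation, and the summation condition) force the defining sum for $\chi$ to be transported correctly. First I would recall from fact (1) that it suffices to work with $\sfrac{Q}{\sim}$ as an abstract poset, so all that must be understood is how $\rho$ and $\mu(\hat 0, \cdot)$ behave on equivalence classes. Rank is immediate: since $\sim$ preserves rank, $\rho$ descends to a well-defined function on $\sfrac{Q}{\sim}$, and because $\hat 0 \in Q$ is a singleton class (homogeneity, part 1), the bottom rank and the overall rank $\rho(\sfrac{Q}{\sim})$ agree with $\rho(Q)$; so the exponents $\rho(P)-\rho(A)$ appearing in $\chi$ match up between the two posets.

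The heart of the argument is to prove by induction on $\rho(X)$ that for every $X \in \sfrac{Q}{\sim}$,
\[
\mu_{\sfrac{Q}{\sim}}(\hat 0, X) \;=\; \sum_{\substack{x \in X}} \mu_{Q}(\hat 0, x),
\]
i.e. the M\"obius value of a class is the sum of the M\"obius values of its members in $Q$. The base case $X = \hat 0$ is clear since that class is a singleton. For the inductive step, one uses the recursive definition $\mu(\hat 0, X) = -\sum_{Y < X} \mu(\hat 0, Y)$ in $\sfrac{Q}{\sim}$, substitutes the inductive hypothesis on each $Y$ in the lower order ideal $L(X)\setminus\{X\}$, and then must re-index the resulting double sum as a sum over the elements $y \in Q$ lying below some $x \in X$. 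Here homogeneity part (2) is exactly what guarantees that "lying below $X$ in $\sfrac{Q}{\sim}$" corresponds cleanly to "lying below some representative $x$"; and the summation condition \eqref{summation_eq}, applied to the classes strictly below $X$, is what lets the telescoping go through so that one recovers $\sum_{x \in X}\big(-\sum_{y < x}\mu_Q(\hat 0,y)\big) = \sum_{x\in X}\mu_Q(\hat 0,x)$. Once this identity is established, summing $\mu(\hat 0,X) q^{\rho(\sfrac{Q}{\sim}) - \rho(X)}$ over all $X$ regroups termwise into $\sum_{x \in Q}\mu_Q(\hat 0,x) q^{\rho(Q)-\rho(x)} = \chi(Q;q)$, which is the claim.

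The step I expect to be the main obstacle is the bookkeeping in the inductive step: making the double-sum re-indexing rigorous requires care that each $y \in Q$ with $\rho(y) < \rho(X)$ that contributes is counted with the correct multiplicity, and that no element below $X$ in the quotient is "missed" or "double-counted" when passing between classes and representatives. This is precisely the point where all three hypotheses interact — homogeneity (2) controls which $y$ appear, rank-preservation keeps the induction well-founded, and the summation condition cancels the cross terms — so the argument must invoke them in the right order rather than treating them independently. I would structure the write-up around the displayed M\"obius identity as the key lemma, prove it carefully, and then deduce $\chi(\sfrac{Q}{\sim};q) = \chi(Q;q)$ in a short final paragraph.
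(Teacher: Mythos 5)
Your proposal is correct and follows essentially the same route as the source the paper relies on: the paper states Lemma \ref{HS_method} without proof, citing Hallam and Sagan, and the identity you make central, $\mu_{\sfrac{Q}{\sim}}(\hat{0},X)=\sum_{x\in X}\mu_{Q}(\hat{0},x)$ proved by induction on rank using the summation condition, is exactly the content the paper itself attributes to this lemma in the remark preceding Conjecture \ref{conj:mobfunctionbound}. For the write-up, the cleanest form of the inductive step is to use the summation condition as $\sum_{Y\le X}\sum_{y\in Y}\mu_{Q}(\hat{0},y)=0$ (with $\mu_Q$ summed over all elements of $Q$ whose classes lie in $L(X)$, which is how the paper applies it later) and subtract the $Y=X$ term, after which rank preservation and the singleton class of $\hat{0}$ give $\chi(\sfrac{Q}{\sim};q)=\chi(Q;q)$ by regrouping, just as you describe.
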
 

\begin{remark}
Hence, given suitable $P_i$, we see that $\chi(P ;q)$ factors. In Hallam and Sagan's paper \protect\cite{HSAG}, they use claws $CL_n$ to construct their products. We will use the Boolean lattice to construct our product. 
\end{remark}

\section{The Tesler Poset}\label{sec:Tesler_poset}
 
We first define the cover relation, introduced by Drew Armstrong \protect\cite{DARM}, and will then use this definition to prove a couple of useful facts which yield some intuition regarding the Tesler poset.

\subsection{Definition of Tesler Poset}
There are two cases in the example in Figure \ref{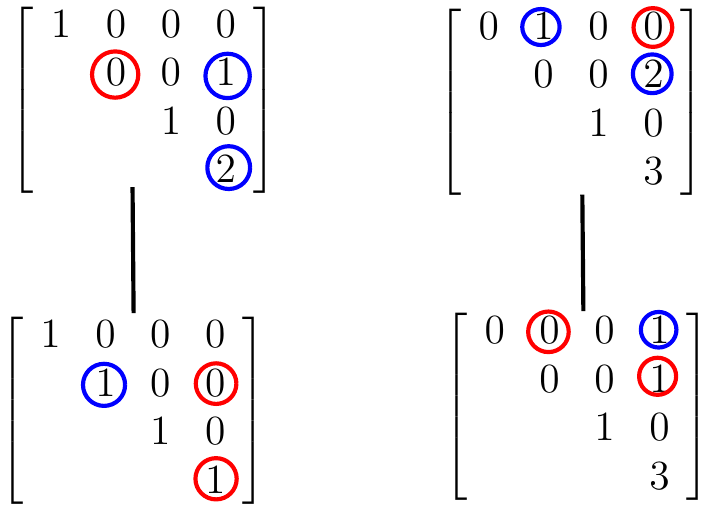} of the cover relation for the matrix representation depending on the location of the entries.   

\begin{defn}
Fix a hook sum vector $\alpha$. Then $A= (a_{ij}) \in \mathcal{T}(\alpha) $ covers $B=( b_{ij} ) \in \mathcal{T}(\alpha)$ and we write $ B \preceq A $ if there exists  $i<j<k$ such that  $a_{ij}=b_{ij}+1$, $a_{jk}=b_{jk}+1$, and $a_{ik}=b_{ik}-1$ or if there exists $i<j$ such that $a_{ij}=b_{ij}+1$, $a_{jj}=b_{jj}+1$, and $a_{ii}=b_{ii}-1$
\end{defn} 

\begin{figure}[H]
\includegraphics[width=50mm, scale=.4] {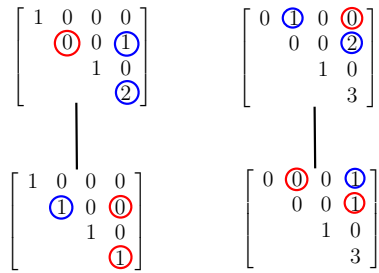}
\caption{The matrix version of the cover relation}
\label{matrix_cover_relation.pdf}
\end{figure}

The poset has a least element, $\hat{0}$, with the main-diagonal corresponding to the hook sum vector and all other entries equal to zero. Hence, in the case of a hook sum vector $(1,1,\ldots,1)$, the minimal element is the identity matrix of size $n$. 

\begin{remark}
With the equivalent notion of a Tesler matrix as an integral flow on the complete directed graph, the cover relation for the Tesler poset can also be described in terms of integral flows. Abusing notation, let $A,B$ be the corresponding integral flows to Tesler matrices $A$ and $B$ respectively. Then, integral flow $A$ covers $B$ if there exists vertices $i<j<k$ such that the flow between $i$ and $k$ is $1$ more in $B$ than it is in $A$ and the flow from vertices $j$ to $k$ and $i$ to $j$ is $1$ more in $A$ than it is in $B$.  
\end{remark}

\begin{figure}[H]
\centering
\includegraphics[width=25mm, scale=.5] {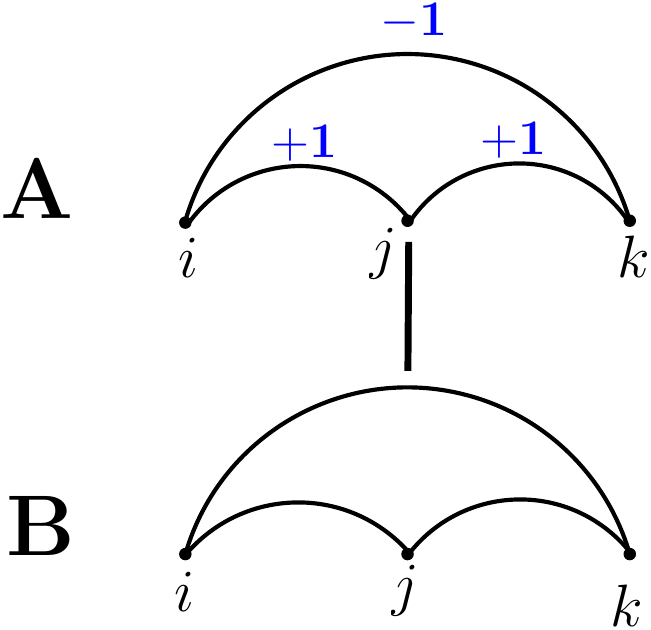}
\caption{The cover relation for the Integral flow representation}
\label{Tesler_cover_relation.pdf}
\end{figure}

\begin{ex} \label{ex:TeslerPoset3}
In the poset below, we see that Armstrong's conjecture is true for the case where $n=3$. Collecting terms from the bottom-up, we get $$\chi(P(1^3);q)=q^3-q^2-q^2-q^2+2q+q-1 = (q-1)^3$$ 
\end{ex}  

\begin{figure}[H]
\centering
\includegraphics[width=60mm, scale=.5] {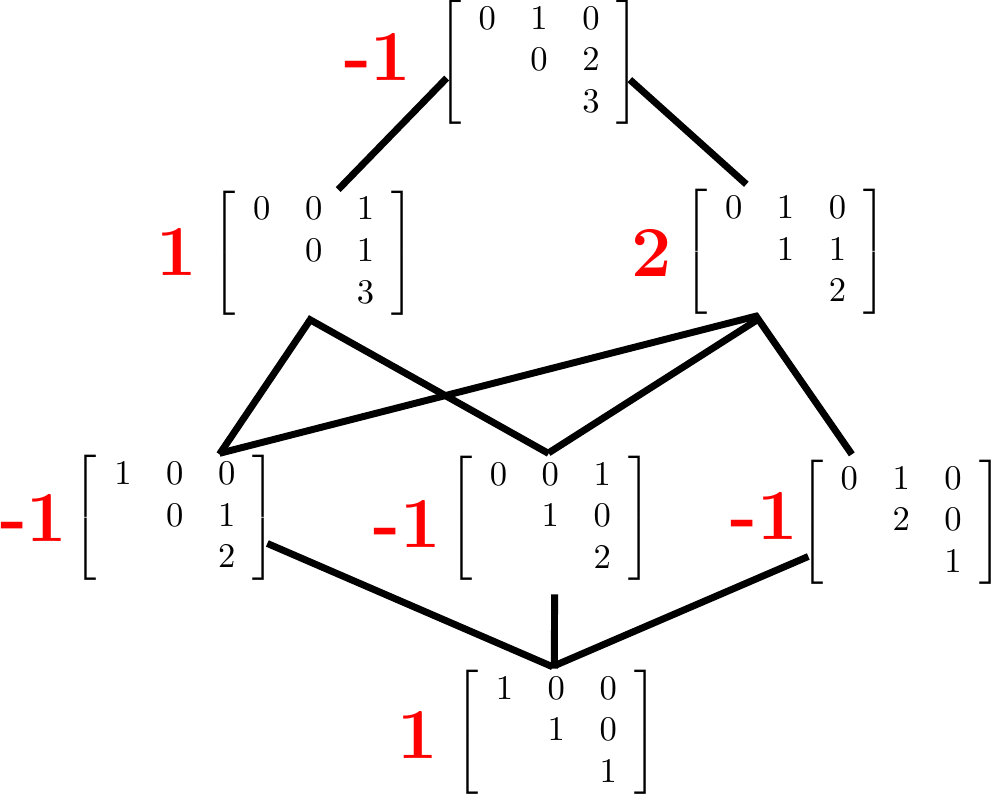}
\caption{The Tesler poset $P(1^3)$ with the values of the M\"{o}bius function in red. See appendix Figure \ref{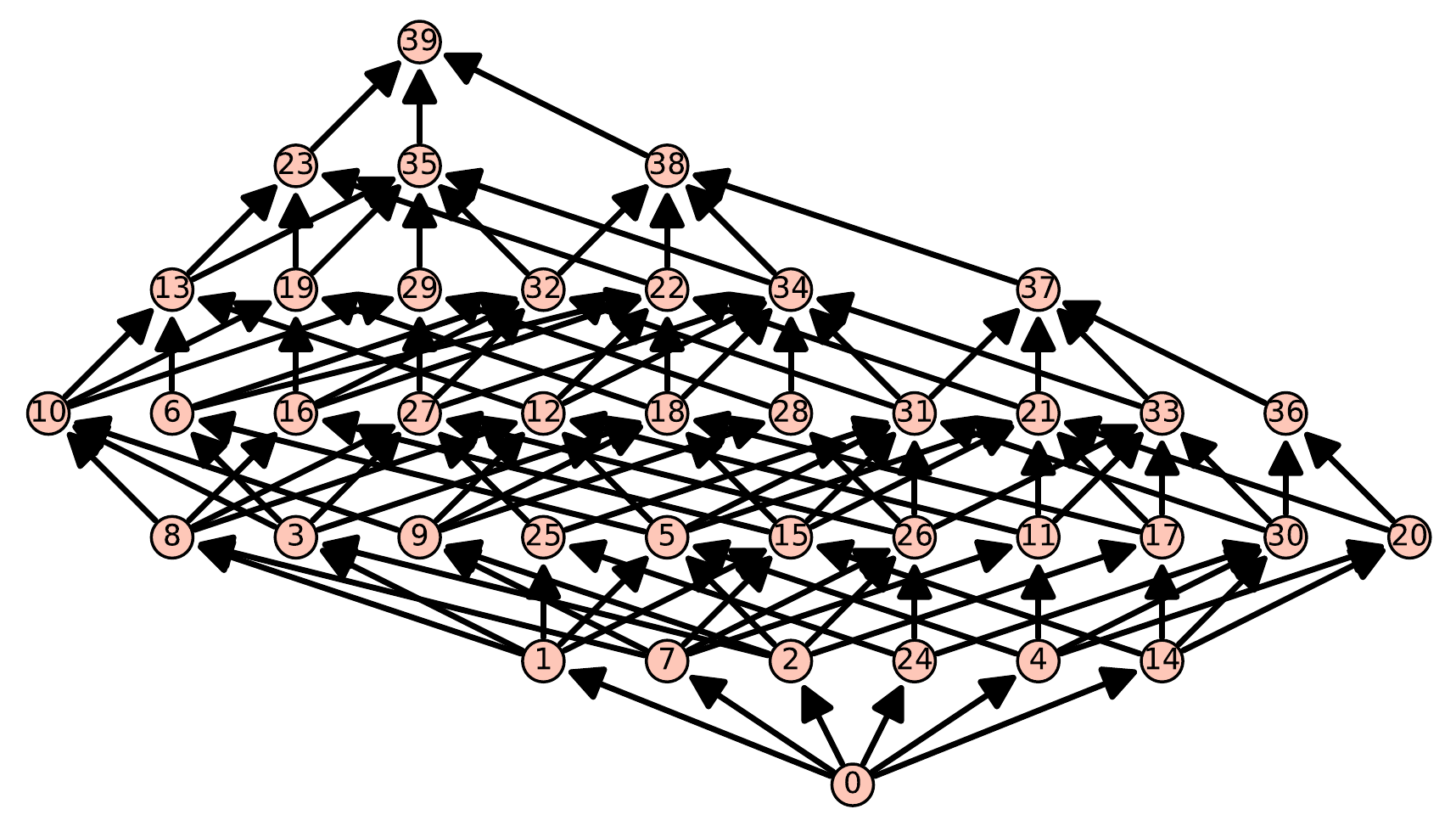} for the Hasse diagram of $P(1^4)$.}
\label{teslerposet.pdf}
\end{figure} 

\begin{remark}
By looking at the Hasse diagram of the Tesler poset $P(1^3)$ in Figure \ref{teslerposet.pdf}, we see that it is not a lattice. 
\end{remark}

\subsection{Properties}
We will show a few properties of the Tesler poset $P(\alpha)$ for $\alpha=(\alpha_1,\alpha_2, \ldots, \alpha_n)$.

\begin{prop}
The rank of a matrix in the Tesler poset $P(\alpha)$ is equal to the sum of the non-main-diagonal entries. That is, for $A=(a_{ij}) \in P(\alpha)$, we have that $$ \rho(A) = \sum\limits_{i>j} a_{ij}$$ 
\end{prop}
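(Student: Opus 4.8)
The plan is to show that the proposed rank function $r(A) := \sum_{i>j} a_{ij}$ (equivalently $\sum_{i<j} a_{ij}$, since the excerpt writes off-diagonal entries as $a_{ij}$ with $i<j$ in the upper-triangular convention — I will use the convention consistent with the cover relation, namely the strictly-above-diagonal entries) is a genuine rank function for $P(\alpha)$: it takes the value $0$ on $\hat 0$, and it increases by exactly $1$ along every cover relation. Since $P(\alpha)$ has a $\hat 0$ and every maximal chain from $\hat 0$ to a given element $A$ then has length $r(A)$, this forces $\rho(A) = r(A)$, which is the claim.

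First I would record that on $\hat 0$ all non-main-diagonal entries vanish (this is exactly the description of $\hat 0$ given right after the definition of the cover relation), so $r(\hat 0) = 0$. Next I would check the increment along a cover $B \preceq A$. There are two cases from the definition. In the first case there exist $i<j<k$ with $a_{ij}=b_{ij}+1$, $a_{jk}=b_{jk}+1$, $a_{ik}=b_{ik}-1$, and all other entries equal; among these three affected positions, $a_{ij}$ and $a_{jk}$ are strictly-above-diagonal (since $i<j$ and $j<k$) and $a_{ik}$ is strictly-above-diagonal (since $i<k$), so the sum of strictly-above-diagonal entries changes by $(+1)+(+1)+(-1) = +1$. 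In the second case there exist $i<j$ with $a_{ij}=b_{ij}+1$, $a_{jj}=b_{jj}+1$, $a_{ii}=b_{ii}-1$, and all else equal; here only $a_{ij}$ is strictly-above-diagonal while $a_{ii},a_{jj}$ are on the main diagonal and do not enter the sum, so again the strictly-above-diagonal sum changes by exactly $+1$. Hence $r(A) = r(B)+1$ whenever $A$ covers $B$.

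Finally I would invoke the standard fact (e.g. from \cite[Ch.3]{RSTAN}) that a finite poset with a $\hat 0$ in which some integer-valued function $r$ satisfies $r(\hat 0)=0$ and $r(A)=r(B)+1$ for every cover $B \preceq A$ is graded, with rank function $\rho = r$; in particular every saturated $\hat 0$–$A$ chain has the same length $r(A)$, so $\rho(A) = \sum_{i>j} a_{ij}$ as asserted. I do not expect any serious obstacle here: the only point requiring a little care is bookkeeping of which of the three entries modified in a cover lie strictly above the diagonal versus on it, and confirming in the second ("diagonal") case that the net effect on the off-diagonal sum is still $+1$ because the two diagonal changes cancel out of the count entirely. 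One should also note, for gradedness to be automatic, that $P(\alpha)$ is finite (it is: $\mathcal{T}(\alpha)$ is a finite set), which is immediate.
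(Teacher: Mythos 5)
Your proposal is correct and follows essentially the same route as the paper: verify that each of the two types of covers raises the non-main-diagonal sum by exactly $1$ and that $\hat{0}$ has off-diagonal sum $0$, then conclude $\rho(A)=\sum a_{ij}$. You simply make explicit the case check and the standard graded-poset fact that the paper leaves implicit (and you correctly note the index convention, since the displayed $i>j$ should be read as the strictly-above-diagonal entries of the upper-triangular matrix).
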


\begin{proof}
As we see in the definition of the cover relation, for any $A,B \in \mathcal{T}(\alpha)$, if $A$ covers  $B$, then we necessarily have that the sum of the non-main-diagonal entries for $A$ is one more than the sum of the non-main-diagonal entries for $B$. The minimal entry has a non-main-diagonal sum of $0$ and we get the desired result.    
\end{proof}

\begin{corollary}
The rank of the Tesler poset $P(\alpha)$ is $ \sum_{i=1}^{n} (n-i)\alpha_i $
\end{corollary}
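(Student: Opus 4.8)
The plan is to deduce the rank of the poset from the rank formula in the preceding proposition together with a short extremal argument. That proposition shows $\rho(\cdot)$ is a genuine rank function on $P(\alpha)$ with $\rho(A) = \sum_{i<j} a_{ij}$ (the sum of the non-main-diagonal entries), and the poset has a least element $\hat 0$, so the rank of $P(\alpha)$ equals $\max_{A \in \mathcal{T}(\alpha)} \sum_{i<j} a_{ij}$. It therefore suffices to evaluate this maximum, which I would do by proving a matching upper and lower bound.

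For the upper bound, the key move is to take the weighted combination of the hook-sum equations $h_k = \alpha_k$ with weights $c_k = n-k$. Expanding $\sum_{k=1}^n (n-k)\,h_k$ and collecting the coefficient of each matrix entry, an off-diagonal entry $a_{ij}$ with $i<j$ acquires coefficient $(n-i)-(n-j) = j-i$, while a diagonal entry $a_{kk}$ acquires coefficient $n-k$; hence
$$\sum_{k=1}^n (n-k)\,\alpha_k = \sum_{i<j}(j-i)\,a_{ij} + \sum_{k=1}^n (n-k)\,a_{kk}.$$
Since every entry of $A$ is a nonnegative integer and $j-i \ge 1$ and $n-k \ge 0$, the right-hand side is at least $\sum_{i<j} a_{ij} = \rho(A)$. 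This yields $\rho(P(\alpha)) \le \sum_{i=1}^n (n-i)\alpha_i$.

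For the matching lower bound, I would exhibit a generalized Tesler matrix achieving this value: set $a_{k,k+1} = \alpha_1 + \cdots + \alpha_k$ for $1 \le k \le n-1$, set $a_{nn} = \alpha_1 + \cdots + \alpha_n$, and let all other entries be $0$. All entries are nonnegative, and a direct check gives $h_1 = a_{12} = \alpha_1$, $h_k = a_{k,k+1} - a_{k-1,k} = \alpha_k$ for $2 \le k \le n-1$, and $h_n = a_{nn} - a_{n-1,n} = \alpha_n$, so this matrix lies in $\mathcal{T}(\alpha)$. Its rank is $\sum_{k=1}^{n-1}(\alpha_1 + \cdots + \alpha_k) = \sum_{i=1}^{n}(n-i)\alpha_i$ after interchanging the order of summation, which combined with the upper bound completes the proof.

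I do not expect a real obstacle here; the only genuine choice is the weight vector $c_k = n-k$, which is forced by wanting the off-diagonal coefficients $j-i$ to be as small as possible (namely $1$, attained on the superdiagonal) while keeping the diagonal coefficients nonnegative. Everything else is a routine verification, and the extremal matrix is just the ``staircase'' configuration obtained by pushing all the weight onto the superdiagonal.
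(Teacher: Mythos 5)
Your proof is correct, and it uses the same skeleton as the paper: reduce to the rank formula $\rho(A)=\sum_{i<j}a_{ij}$ from the preceding proposition and identify the extremal ``staircase'' matrix with $a_{k,k+1}=\alpha_1+\cdots+\alpha_k$ and $a_{nn}=\alpha_1+\cdots+\alpha_n$. Where you differ is in how maximality is justified. The paper simply asserts that this staircase is the maximal element of $P(\alpha)$, saying it is ``easy to see when considering the integral flow representation'' (there all flow is routed along the consecutive edges $i\to i+1$, which maximizes the number of off-diagonal units); it also misstates the corner entry as $M_{n,n}=\alpha_n$, whereas your value $\sum_{k=1}^n\alpha_k$ is the one that actually satisfies $h_n=\alpha_n$. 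You instead prove a genuine upper bound by taking the weighted combination $\sum_k (n-k)h_k$ of the hook-sum constraints, which gives $\sum_k(n-k)\alpha_k=\sum_{i<j}(j-i)a_{ij}+\sum_k(n-k)a_{kk}\ge\rho(A)$ for every $A\in\mathcal{T}(\alpha)$. This buys you something the paper's argument does not: you never need to know that the staircase is the maximum in the partial order (only that no element has larger rank), and the inequality argument is self-contained rather than an appeal to the flow picture. The paper's route, in exchange, is shorter and conveys the structural fact that $P(\alpha)$ has a unique top element.
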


\begin{proof}
The maximal element, $M \in P(\alpha)$, is such that the entry $M_{i,i+1} = \sum_{k=1}^i \alpha_k$ and $M_{n,n}=\alpha_n$ with all other entries zero. This is easy to see when considering the integral flow representation. The result then follows from the previous proposition.  	 
\end{proof}

\subsection{Relation to Boolean Lattice}
We now relate the poset formed by generalized Tesler matrices with hook sum vector $a_n= (1,0,\ldots,0,1)$ to the well-known Boolean lattice for subsets of $[n]:=\{1,2,\ldots,n \}$ under the inclusion relation. 

\begin{remark}
By the algorithm in Section \ref{tesler_generating_process}, the last element in the hook sum vector does not impact the poset. Hence, $P(1,0,\ldots,0,1) \cong P(1,0,\ldots,0)$.
\end{remark}

\begin{prop} \label{boolean_lattice_bijection}
Let $a_n = (1,0,\ldots,0,1)$ be a hook sum vector of size $n$. Then we have that $P(a_n) \cong B_{n-1}$  \protect\cite{AHM}.
\end{prop}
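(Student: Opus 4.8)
The plan is to exhibit an explicit poset isomorphism $\Phi : B_{n-1} \to P(a_n)$ where $a_n = (1,0,\ldots,0,1)$. Recall that the minimal element $\hat{0}$ of $P(a_n)$ has main diagonal equal to the hook sum vector and all off-diagonal entries zero; after the remark reducing to $P(1,0,\ldots,0)$ we may think of the matrices as having size $n-1$ with hook sum $(1,0,\ldots,0)$. First I would analyze what a generalized Tesler matrix with this hook sum looks like: the hook sum condition $h_1 = 1$ forces $a_{1,1} + a_{1,2} + \cdots + a_{1,n-1} = 1$, and for $k > 1$ the condition $h_k = 0$ reads $a_{k,k} + \cdots + a_{k,n-1} = a_{1,k} + \cdots + a_{k-1,k}$. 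I claim these constraints force every such matrix to have exactly one nonzero entry in row $1$ and to be highly rigid: in fact one shows by induction on the rows that each row $k \geq 2$ is either all zero or has a single $1$, and the positions of these $1$'s are determined by a subset $S \subseteq \{2,\ldots,n-1\}$ recording which columns receive a $1$ from above. This suggests defining $\Phi(S)$ to be the matrix whose support traces out the flow that splits the unit at vertex $1$ and routes it, via the vertices indexed by $S$, down to the sink. Equivalently, in the integral-flow picture of Section \ref{sec:ifr}, a subset $S = \{i_1 < i_2 < \cdots < i_r\} \subseteq [n-1]$ corresponds to the flow of value $1$ along the directed path $1 \to i_1 \to i_2 \to \cdots \to i_r \to (n\text{th vertex, the sink})$, with the convention that the empty set gives the direct edge $1 \to \text{sink}$; actually it is cleaner to index by subsets of $\{2,\dots,n-1\}$, or to note $|\{2,\dots,n-1\}| = n-2$ and include vertex $1$ to recover $B_{n-1}$.

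Next I would check that $\Phi$ is a well-defined bijection onto $\mathcal{T}(a_n)$: well-definedness amounts to verifying the hook sums of the path-flow matrix are $(1,0,\ldots,0,\ast)$, which is immediate since a single unit path from source $1$ to the sink contributes net outflow $1$ at vertex $1$ and net flow $0$ at every intermediate vertex; injectivity is clear since distinct subsets give distinct support; surjectivity follows from the rigidity analysis above, which shows every matrix in $\mathcal{T}(a_n)$ is exactly one of these path-flows (this is essentially the content of \protect\cite{HIO}/\protect\cite{AHM}, and can be proved directly by the row-induction). Then I would verify that $\Phi$ is order-preserving and order-reflecting by matching the cover relations. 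Adding an element to $S$ — say inserting a vertex $j$ into the path between consecutive chosen vertices $i$ and $k$ (with $i < j < k$, allowing $i = 1$ or $k = $ sink) — replaces the flow edge $i \to k$ by the two edges $i \to j$ and $j \to k$; in matrix terms this decreases $a_{i,k}$ by $1$ and increases $a_{i,j}$ and $a_{j,k}$ by $1$ (or, when $k$ is the sink, increases the diagonal entry $a_{j,j}$ and decreases $a_{i,i}$ by $1$ when $i=1$, matching the second clause of the cover relation). This is precisely one step of Armstrong's cover relation. Conversely every cover in $P(a_n)$ is of this form because, given the rigid path structure, the only way to apply a cover move and stay within $\mathcal{T}(a_n)$ is to subdivide an existing flow edge. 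Hence $S \subseteq S'$ as subsets if and only if $\Phi(S) \preceq \Phi(S')$ in $P(a_n)$, which gives the isomorphism with $B_{n-1}$ (after accounting for the index shift, noting $\hat 0 \mapsto \varnothing$ and the top element, the "full path" through all intermediate vertices, maps to the maximal matrix described in the previous corollary).

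The main obstacle I anticipate is the surjectivity/rigidity step: showing that the hook-sum constraints with $(1,0,\ldots,0)$ really do force the matrix to be a single unit path with no "branching" or larger entries. A priori one might worry about configurations where a row carries a $2$ or where flow splits and rejoins; the argument that rules this out is the induction on rows combined with nonnegativity (row $1$ has entry-sum $1$, so exactly one $1$; inductively, if all flow arriving at vertex $k$ from above totals $c_k \in \{0,1\}$, then row $k$ must send out exactly $c_k$, again forcing a single $1$ or an all-zero row, and the outgoing $1$ contributes to exactly one $c_j$ with $j > k$). Writing this cleanly — and being careful about the role of the diagonal entries and the sink vertex, and about the index conventions so that we land on $B_{n-1}$ rather than $B_{n-2}$ — is the part that requires the most care; the cover-relation matching is then a routine case check against the two clauses of the definition.
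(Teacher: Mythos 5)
Your approach is correct in substance but takes a genuinely different route from the paper. You work in the integral-flow picture of Section \ref{sec:ifr} and argue rigidity directly: since the only net inflow is a single unit at vertex $1$ and the underlying graph is acyclic, every matrix in $\mathcal{T}(a_n)$ is a $0$--$1$ matrix whose support is one unit path from vertex $1$ to the sink, so matrices biject with the sets of intermediate vertices visited, and a cover relation is exactly an edge subdivision, i.e.\ insertion of one vertex, i.e.\ addition of one element to the set. The paper arrives at the same underlying bijection (its map $\Phi$ records which columns contain a nonzero entry, which is the same data as the vertices your path passes through), but it proves bijectivity by induction on the size using the Tesler generating algorithm of Section \ref{tesler_generating_process}, and its order-preservation check is terser; your version is more self-contained and structural, and it makes order-reflection explicit, at the cost of having to write out the flow-decomposition argument you flag as the delicate step (which does go through exactly as you sketch, by row induction or by path decomposition of a value-one flow). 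One correction on indices: do not shrink the matrices to size $n-1$. The remark preceding the proposition only replaces the last hook sum $1$ by $0$ while keeping the matrices $n\times n$ (equivalently, the bottom-right diagonal entry is forced and carries no information), so the relevant flows live on $n+1$ vertices with source $1$ and sink $n+1$, the optional intermediate vertices are $2,\ldots,n$ --- exactly $n-1$ of them --- and $B_{n-1}$ comes out with no further adjustment; in particular your fallback suggestion to ``include vertex $1$'' to repair the count should be discarded, since vertex $1$ lies on every path and is never a free choice.
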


\begin{proof}
Let $\text{Pw}([n-1])$ be the power set of $[n-1]$. We define a bijection $\Phi: \mathcal{T}(a_n) \mapsto \text{Pw}([n-1])$ such that if there is a non-zero entry in the $(n-i+1)st$ column of $A\in \mathcal{T}(a_n)$, then $i \in \Phi(A)$ and otherwise $i \notin \Phi(A)$. (See Figure \ref{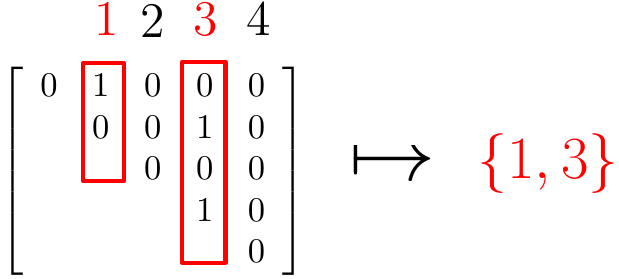} for an example of the map $\Phi$.) We show that this is a bijection via induction and the Tesler generating algorithm described in Section \ref{tesler_generating_process}. This clearly holds for when $n=1$, so let us suppose it holds for size $n$. Then, the size $n+1$ generalized Tesler matrices with hook sum vector $a_{n+1}$ are constructed from the size $n$ generalized Tesler matrices via the Tesler generating algorithm. 

\medskip
Consider an arbitrary generalized Tesler matrix with hook sum vector $a_n$ and by our inductive hypothesis, it corresponds with some subset of $[n-1]$. If we choose to subtract the one non-zero main-diagonal entry, and hence add it to the $(n+1)st$ column, then this amounts to adding the element $n$ to our set. If we don't subtract the non-zero main-diagonal element, then the $(n+1)st$ column will be all zeroes and hence, we don't add the element $n$ to our set. Therefore, this map is a bijection.

\begin{figure}[H]
\centering
\includegraphics[width=75mm, scale=.5] {Boolean_Lattice_map.pdf}
\caption{An example of the bijection above for the case where $n=5$ and $k=3$}
\label{Boolean_Lattice_map.pdf}
\end{figure} 

Observe that this bijection is order preserving as all of the covers of a generalized Tesler matrix with hook sum vector $a_n$ correspond to a set which contain the initial set. This is because if there is a non-zero entry in the $ith$ column where $i \neq 1$, then there must be another non-zero entry in that same column after applying the cover relation since the hook sum must add up to $0$, applying the cover relation does not remove any of the elements of the set. If $i=1$, then there is only one such matrix that can satisfy the hook sum vector $a_n$ and this matrix corresponds to the empty set. As a result, $\Phi$ is also order preserving and we have our desired result. 
\end{proof}

\begin{corollary}\label{cor:boolean_lattice}
The characteristic polynomial of the poset $P(a_n)$ is $(q-1)^{n-1}$

\end{corollary}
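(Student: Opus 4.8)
The plan is to read this off immediately from Proposition~\ref{boolean_lattice_bijection} together with the two elementary facts about characteristic polynomials recorded in Section~\ref{subsec:hallam_sagan}, namely that $\chi$ is an isomorphism invariant (Fact 1) and that it is multiplicative on direct products (Fact 2).

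The first step is to pin down $\chi(B_m;q)$ for the Boolean lattice $B_m$. I would use the product decomposition $B_m \cong B_1\times\cdots\times B_1$ with $m$ factors, where $B_1$ denotes the two-element chain $\hat0\prec\hat1$. Since $\mu(\hat0,\hat0)=1$, $\mu(\hat0,\hat1)=-1$, and $\rho(B_1)=1$, we get $\chi(B_1;q)=q-1$, so Fact 2 yields $\chi(B_m;q)=(q-1)^m$. Equivalently, one may compute directly: in $B_m$ one has $\mu(\hat0,S)=(-1)^{|S|}$ and $\rho(S)=|S|$, whence $\chi(B_m;q)=\sum_{k=0}^{m}\binom{m}{k}(-1)^k q^{m-k}=(q-1)^m$ by the binomial theorem.

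The second step is simply to apply Fact 1: by Proposition~\ref{boolean_lattice_bijection} we have $P(a_n)\cong B_{n-1}$, hence $\chi(P(a_n);q)=\chi(B_{n-1};q)=(q-1)^{n-1}$, which is the claim. There is no genuine obstacle here, since the substantive content --- constructing the bijection $\Phi$ and verifying that it is a rank-preserving poset isomorphism --- was already handled in the proof of Proposition~\ref{boolean_lattice_bijection}; what remains is only the bookkeeping above. (One could also verify the base cases $n=1,2$ by hand as a sanity check, but they are subsumed by the general argument.)
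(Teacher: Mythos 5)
Your proposal is correct and follows the same route as the paper: invoke Proposition~\ref{boolean_lattice_bijection} to get $P(a_n)\cong B_{n-1}$ and combine it with $\chi(B_{n-1};q)=(q-1)^{n-1}$, which the paper simply cites as known while you additionally verify it via the product decomposition (or the binomial identity). The extra verification is harmless detail, not a different argument.
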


\begin{proof}
The characteristic polynomial of $B_n$ is known to be $(q-1)^{n}$, hence the previous proposition that $P(a_n)\cong B_{n-1}$ gives us the desired result. 
\end{proof}

\section{Application of Hallam-Sagan to the Tesler Poset} \label{sec:mainthm}

\subsection{Initial Case} 
We now can use the Hallam-Sagan method discussed in Section \ref{subsec:hallam_sagan} for calculating the characteristic polynomial of the Tesler poset. In this subsection, we consider the initial case which serves as a motivating example. Let $\alpha,e_{n-1} \in \{0,1\}^n $ be such that $\alpha_{n-1}=0$, where $e_i$ is the $i$th elementary vector. In Figure \ref{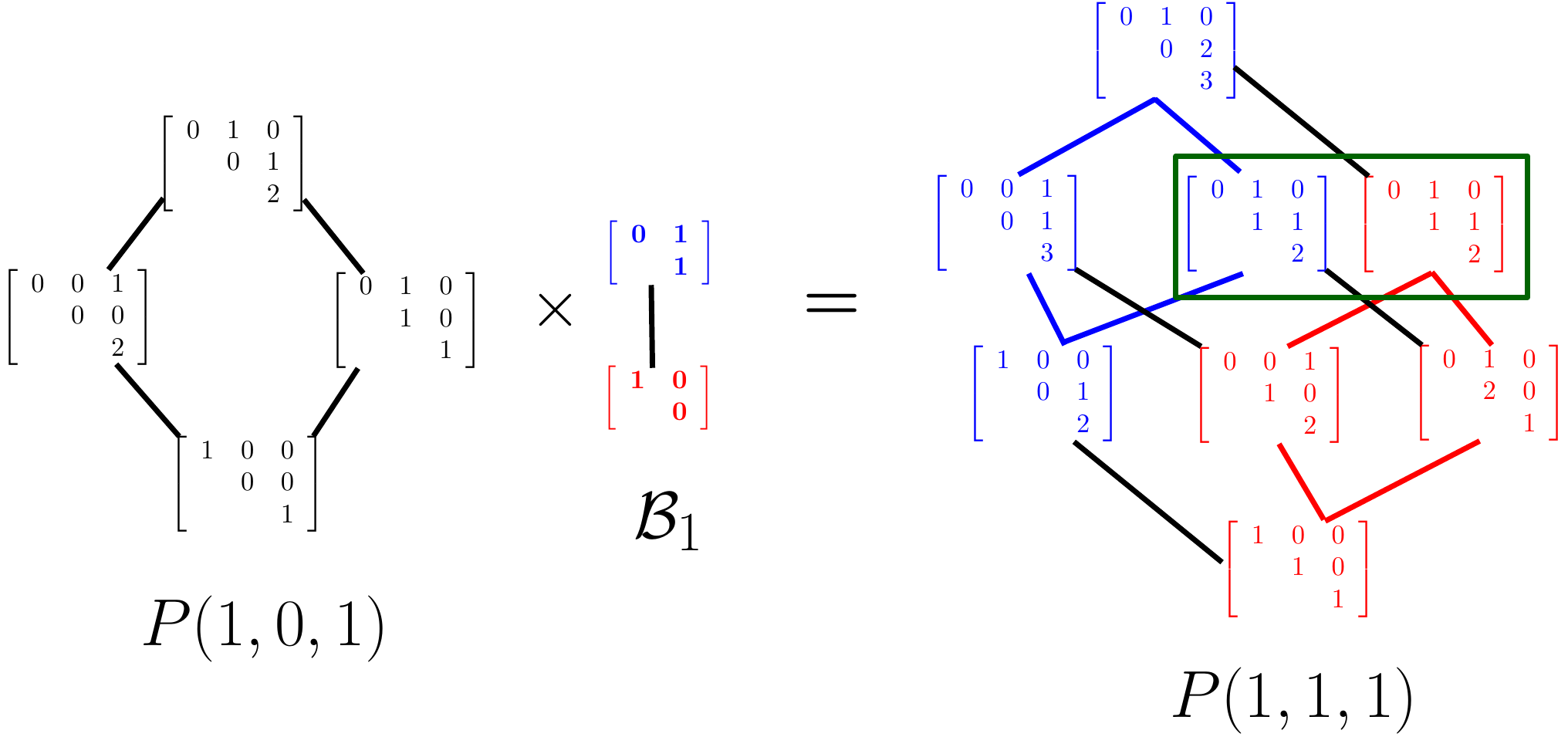}, for instance, we have $\alpha=(1,0,1)$ and $\alpha + e_2 = (1,1,1)$. We want to compute the characteristic polynomial for the poset $P(\alpha + e_{n-1})$ using the characteristic polynomials of $P(\alpha)$ and $B_1$. 

We construct our product poset by considering a set of maps between $\mathcal{T}(\alpha)$ and  $\mathcal{T}(\alpha + e_{n-1})$. Let $\phi_1, \phi_2: \mathcal{T}(\alpha) \rightarrow \mathcal{T}(\alpha + e_{n-1})$ be such that $\phi_1 :A \mapsto A + \varepsilon_{n-1,n-1}$ and $\phi_2 :A \mapsto A + \varepsilon_{n,n-1} +\varepsilon_{n,n}$ where $\varepsilon_{i,j}$ is the elementary matrix. It is easy to check that these maps are well-defined and that they form a poset isomorphic to $B_1$ where $\phi_1 \preceq \phi_2$. In this motivation example, we define our equivalence relation $\sim$ on the product poset $P(\alpha) \times B_1$ as $(A,\phi_1) \sim (B, \phi_2)$ if and only if $\phi_1(A) = \phi_2(B)$. As we will show in Section \ref{gencasethm}, $\sim$ satisfies all of the  conditions in Lemma \ref{HS_method} so $$ \chi(\sfrac{ P(\alpha) \times B_1} {\sim};q) = \chi( P(\alpha) \times B_1;q) = \chi(P(\alpha);q) \cdot \chi(B_1;q) = (q-1)^{n-1}\cdot(q-1) = (q-1)^n$$

\begin{figure}[H]
\centering
\includegraphics[width=125mm, scale=1] {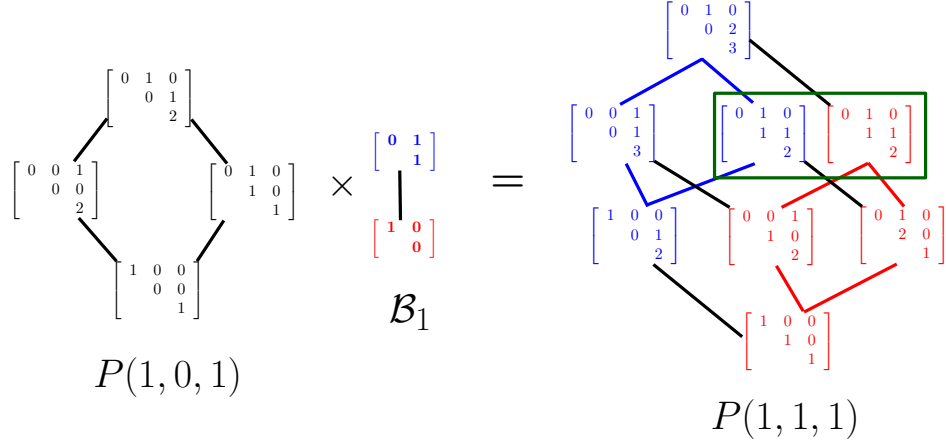}
\caption{Our method in the case where $n=3$ with the equivalent elements enclosed in a green rectangle.}
\label{product_method.pdf}
\end{figure}

\subsection{General Case} \label{gencasethm}
We will now generalize the idea from the previous section which will lead to our main theorem. Fix $n,r \in \mathbb{N}$ such that $r<n$, and let $\alpha \in \{0,1\}^n$ and $\alpha + e_{n-r+1} \in \{0,1\}^n$. The previous section considers the case where $r=2$. We seek to show that $$ \chi( P(\alpha + e_{n-r+1});q) = (q-1)^{r-1} \chi( P(\alpha);q) $$  We will consider a poset of maps from $\mathcal{T}(\alpha)$ to $\mathcal{T}(\alpha+e_{n-r+1})$. While there are certainly other such maps, we will consider a natural, intuitive set of maps which have a nice structure and turn out to be sufficient. In order for  $\phi:\mathcal{T}(\alpha) \mapsto \mathcal{T}(\alpha + e_{n-r+1})$ to be well-defined, it must increase the $(n-r+1)$st hook sum by $1$ while not changing the other hook sums. As a result, we consider maps which  can be thought of as an $r \times r$ upper triangular matrix with a hook sum vector $(1,0^{r-1})$, which is exactly an element of $\mathcal{T}(1,0^{r-1})$. We previously showed that these matrices are isomorphic to the Boolean lattice, so we often label these maps with their corresponding set.

\begin{ex}
Below is the poset of maps in the case where $r=3$. This poset is isomorphic to subsets of $\{1,2\}$ under the inclusion relation. 
\begin{figure}[H]
\centering
\includegraphics[width=75mm, scale=.5] {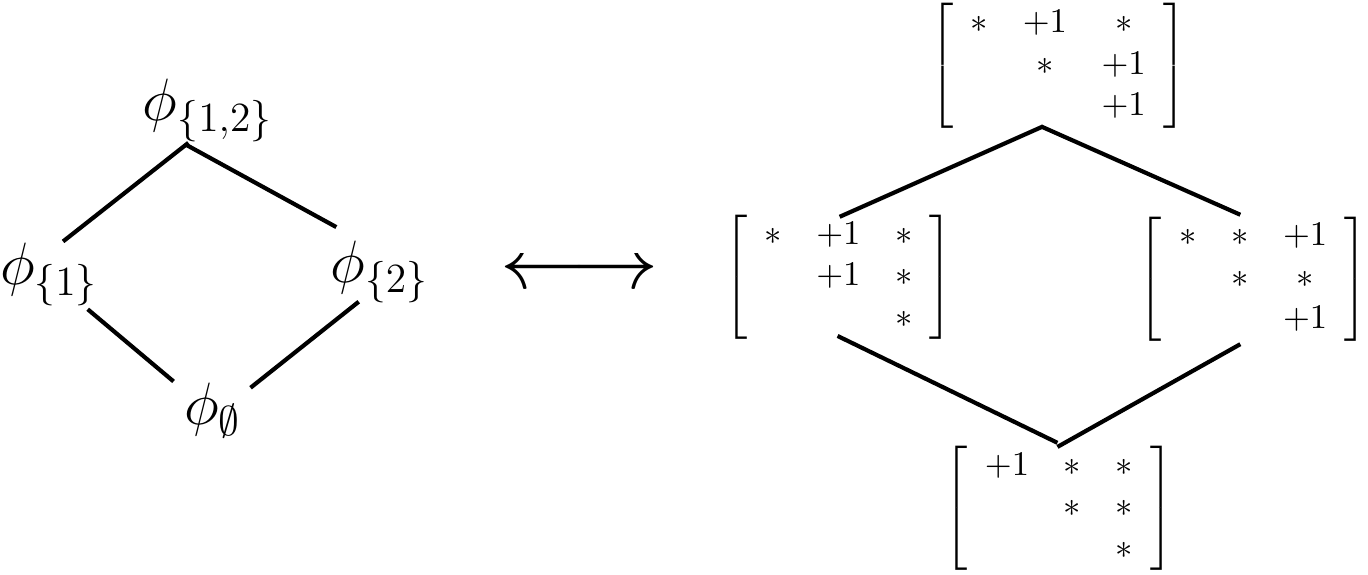}
\label{Boolean_Lattice_Maps.pdf}
\caption{* indicates no change to the element}
\end{figure} 

\end{ex}

\noindent Let $Q_A$ be the subposet of $P(\alpha+e_{n-r+1})$ of the matrices $\phi_A(\mathcal{T}(\alpha))$. 
\begin{prop}
We have the following facts: \\
(1) Let $A \subseteq [n]$, then $Q_A \cong P(\alpha)$ \\ 
(2) $\bigcup\limits_{A \subseteq [n]} \phi_A(\mathcal{T}(\alpha))=\mathcal{T}(\alpha + e_{n-r+1})$ 
\end{prop}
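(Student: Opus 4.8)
The plan is to prove both statements by unwinding the definition of the maps $\phi_A$ and exploiting the bijection $\Phi$ with the Boolean lattice established in Proposition~\ref{boolean_lattice_bijection}. For part~(1), fix $A \subseteq [n]$ (really a subset of the index set $\{1,\dots,r-1\}$ labelling the Boolean lattice $B_{r-1}$ of maps) and let $\phi_A \in \mathcal{T}(1,0^{r-1})$ be the corresponding $r\times r$ "map matrix". The map $\phi_A$ acts on a matrix $B \in \mathcal{T}(\alpha)$ by subtracting prescribed amounts from certain main-diagonal entries in the last $r$ rows/columns and redistributing them according to the fixed pattern of $\phi_A$, in exactly the way the Tesler generating algorithm of Section~\ref{tesler_generating_process} prescribes; since $\phi_A$ has hook sum vector $(1,0^{r-1})$, adding its pattern to $B$ changes only the $(n-r+1)$st hook sum, by $1$, so $\phi_A(B) \in \mathcal{T}(\alpha + e_{n-r+1})$ and $\phi_A$ is well-defined. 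First I would check $\phi_A$ is injective: the entries of $\phi_A(B)$ outside the affected cells agree with those of $B$, and the affected cells determine $B$'s original diagonal entries by adding back what was moved, so $B$ is recoverable. Then I would verify $\phi_A$ is order-preserving and order-reflecting: a cover relation in $P(\alpha)$ involves three entries $(i,j),(j,k),(i,k)$ (or the diagonal variant) with indices among the first $n-1$ coordinates that are untouched by $\phi_A$ when $A$ is fixed, hence the same triple of entries realizes a cover in $Q_A$, and conversely any cover within $Q_A$ must occur among those same "free" entries because the cells controlled by $\phi_A$ are frozen to the pattern of $\phi_A$. Combining injectivity with the two-way order preservation gives the poset isomorphism $Q_A \cong P(\alpha)$.

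For part~(2), the containment $\bigcup_A \phi_A(\mathcal{T}(\alpha)) \subseteq \mathcal{T}(\alpha + e_{n-r+1})$ is immediate from well-definedness. For the reverse containment, take any $C \in \mathcal{T}(\alpha + e_{n-r+1})$ and run the Tesler generating algorithm in reverse on the last $r$ rows: the entries of $C$ in rows $n-r+1,\dots,n$ restricted to columns $n-r+1,\dots,n$, together with the forced redistribution to recover hook sum vector $\alpha$ in the smaller matrix, single out a unique map pattern $\phi_A$ (equivalently, a unique subset $A$, via $\Phi$) and a unique preimage $B \in \mathcal{T}(\alpha)$ with $\phi_A(B) = C$. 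This is essentially the statement that the Tesler generating algorithm, applied with all $r-1$ extra "steps" collapsed into one bulk step, is a surjection onto $\mathcal{T}(\alpha+e_{n-r+1})$; it mirrors the induction already used in the proof of Proposition~\ref{boolean_lattice_bijection}.

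The main obstacle I anticipate is bookkeeping rather than conceptual: making precise exactly which matrix cells $\phi_A$ reads and writes, and confirming that these are disjoint from the cells involved in \emph{any} cover relation internal to $Q_A$, so that the order-reflecting direction genuinely holds. One must be careful that a cover in $P(\alpha + e_{n-r+1})$ between two elements of $Q_A$ could a priori use an entry in the "frozen" region; ruling this out requires observing that both endpoints of such a cover have identical frozen entries (namely the pattern dictated by $\phi_A$), so the cover must be supported entirely on the free entries and hence pulls back to a cover in $P(\alpha)$. Once the index ranges are pinned down, both parts follow from the definitions together with the already-established Boolean lattice correspondence.
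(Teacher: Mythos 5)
Your proposal goes wrong at the level of what the maps $\phi_A$ actually are, and this undermines both parts. The maps do not subtract from diagonal entries of $B$ and redistribute in the manner of the generating algorithm of Section \ref{tesler_generating_process}; as the motivating case $\phi_1 : A \mapsto A + \varepsilon_{n-1,n-1}$ and Definition \ref{defn:equiv} make explicit, $\phi_A$ is simply translation $B \mapsto B + S_A$, where $S_A \in \mathcal{T}(1,0^{r-1})$ is a fixed $0$--$1$ matrix placed in the lower-right $r \times r$ corner. In particular there are no ``frozen'' cells: the entries of elements of $Q_A$ in the last $r$ rows and columns vary with $B$, and cover relations in $P(\alpha)$ routinely use entries of that block (already for $\alpha = (1,0,1)$, $r=2$, the cover out of the minimal element changes the entry $(2,2)$). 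So your argument for order-preservation and order-reflection in part (1) --- that covers are supported on cells untouched by $\phi_A$, while the cells controlled by $\phi_A$ are frozen --- rests on false premises. The correct (and simpler) reason, which is essentially the paper's one-line argument, is that translation by a fixed nonnegative matrix preserves differences of entries, hence sends a cover move to a cover move and conversely for two elements of $Q_A$.

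For part (2), your uniqueness claim is false: with $\alpha = (1,0,1)$ the Tesler matrix of Example \ref{ex:first} lies in the image of both maps, since subtracting $\varepsilon_{2,2}$ or subtracting $\varepsilon_{2,3}+\varepsilon_{3,3}$ both leave a matrix in $\mathcal{T}(1,0,1)$. Indeed, if the union were disjoint, the equivalence relation of Definition \ref{defn:equiv} would be trivial and the whole Hallam--Sagan apparatus of this section unnecessary; the overlaps are exactly the identifications boxed in green in Figure \ref{product_method.pdf}. Only existence is needed, but your existence step (``run the Tesler generating algorithm in reverse'') is not an argument: that algorithm relates sizes $n$ and $n+1$, whereas here the size is fixed and you must exhibit some pattern $S_B$ with $C - S_B \geq 0$ entrywise. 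The missing construction, which is what the paper's proof supplies (mirroring Proposition \ref{boolean_lattice_bijection}), is: the $(n-r+1)$st hook sum of $C$ is positive, so row $n-r+1$ contains a positive entry; if that entry is the diagonal one, $C \in \phi_\emptyset(\mathcal{T}(\alpha))$, and otherwise a positive entry $(n-r+1,j)$ forces, via the hook sum of row $j$, a positive entry in row $j$, so iterating yields a path of positive entries ending on the diagonal, and subtracting $1$ along this path is exactly subtracting some $S_B \in \mathcal{T}(1,0^{r-1})$, giving $C \in \phi_B(\mathcal{T}(\alpha))$.
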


\begin{proof}
(1) Clearly $\phi_A$ is an injective map and is order preserving, so the posets are therefore isomorphic.  

\medskip
\noindent(2) By the well-defined nature of all of these maps, we clearly have that \newline $\bigcup\limits_{A \subseteq n} \phi_A(\mathcal{T}(\alpha)) \subseteq \mathcal{T}(\alpha + e_{n-r+1}) $. Now, let us consider the other direction. Let $A \in \mathcal{T}(\alpha + e_{n-r+1})$, then there must be a non-zero element in the $rth$ row. If this nonzero element is also in the $rth$ column, then one can check that $A \in \phi_{\emptyset}(\mathcal{T}(\alpha))$. Otherwise, by considering the columns with non-zero entries, we can construct a set $B \subseteq [n]$ in the same manner as we did in Proposition \ref{boolean_lattice_bijection} such that $A \in \phi_{B}(\mathcal{T}(\alpha))$. That is, if and only if there is a non-zero entry in the $(n-i+1)$st column, then the element $i$ is in the set $B$. As a result, we get that $ \mathcal{T}(\alpha + e_{n-r+1}) \subseteq \bigcup\limits_{A \subseteq n} \phi_A(\mathcal{T}(\alpha))$.  
\end{proof}

We can form a poset of the maps $\phi_A$ that is isomorphic to the Boolean lattice as we did in our motivating example and can consider the product poset $P(\alpha) \times B_{r-1}$. Since the maps $\phi_A$ are additive maps, we often view $\phi_A$ as a matrix $S_A \in \mathcal{T}(1,0^{r-1})$. 

\begin{defn}\label{defn:equiv}
We define the equivalence relation $\sim$ on $P(\alpha) \times B_{r-1}$ by $$(A_1,S_1) \sim (A_2, S_2) \text{  if  } A_1+S_1 = A_2+S_2$$ where the equality is matrix equality. We have to be careful with how we define the addition of these matrices as the dimensions of the square matrices do not match. We extend the matrix $S_i$ such that it is an $n \times n$ matrix in the following manner. The entry $S_{i,j}$ becomes the entry $S_{i+(n-r), j+(n-r)}$ and all other entries of $S$ are zero. Essentially, we are placing our matrix in the lower right corner in order to make addition of matrices defined. 
\end{defn}

\begin{figure}[H]
\centering
\includegraphics[width=90mm, scale=1] {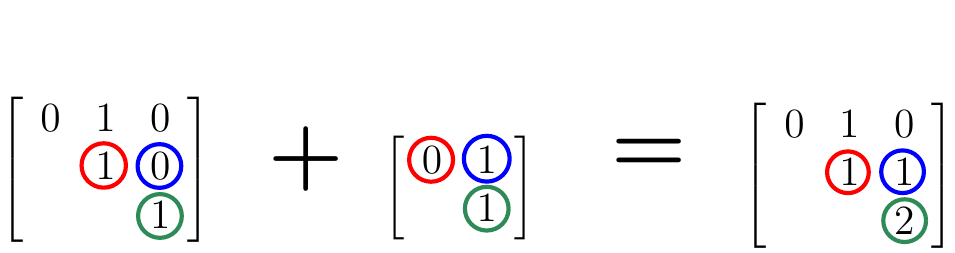}
\caption{The entries circled with the same color are added together to get our resulting $(A+S)$ matrix.}
\label{shifted_matrix_addition.pdf}
\end{figure}

Clearly this is a homogeneous equivalence relation which preserves rank as it satisfies the conditions discussed in Section \ref{subsec:hallam_sagan}. Therefore, we have that $\sfrac {P(\alpha) \times B_{r-1}}{\sim}$ is a valid poset. We now seek to show that the summation condition \eqref{summation_eq} holds. In order to do this, we will first need some technical lemmas. The first lemma restricts what elements can be in the same equivalence class. 

\begin{lemma} \label{first_sum_lemma}
Let $A_0$ be the minimal element of $P(\alpha)$, and $S_0$ be the matrix representation of $\phi_\emptyset$ which is the minimal element of $B_{r-1}$. Then, for non-minimal $A \in P(\alpha)$ and non-minimal $S_d \in B_{r-1}$, we have that  $(A_0,S_d)$ and $(A,S_0)$ are in different equivalent classes of the relation $\sim$.  
\end{lemma}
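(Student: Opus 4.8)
The plan is to compare the two matrices $\phi_\emptyset(A_0) + $ (the shift of $S_0$) and $\phi_A(A_0') + $ (shift of $S_0$), or more precisely, to analyze directly what matrix each of $(A_0, S_d)$ and $(A, S_0)$ represents under the addition rule of Definition \ref{defn:equiv}, and to locate a single matrix entry on which they must disagree. Recall that under $\sim$, the pair $(A_1, S_1)$ is identified with the $n \times n$ matrix $A_1 + S_1$, where $S_1 \in \mathcal{T}(1, 0^{r-1})$ is placed in the lower-right $r \times r$ block. Since $A_0$ is the minimal element of $P(\alpha)$, it is diagonal (its main diagonal is the hook-sum vector $\alpha$ and all off-diagonal entries vanish); and $S_0$ is the diagonal matrix representing $\phi_\emptyset$, namely $\varepsilon_{n-r+1,n-r+1}$ in the extended coordinates. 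So $(A_0, S_0)$ is a diagonal matrix, and $(A_0, S_d)$ differs from it only inside the lower-right $r \times r$ block, in a way governed by the nontrivial subset $d \subseteq [r-1]$ corresponding to $S_d$.

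First I would spell out the two matrices explicitly. For $(A, S_0)$: since $A$ is non-minimal in $P(\alpha)$, by the rank proposition $\sum_{i>j} a_{ij} = \rho(A) \geq 1$, so $A$ has a strictly positive entry strictly above the main diagonal, say $a_{ik} > 0$ with $i < k$; adding $S_0$ only touches the single diagonal position $(n-r+1, n-r+1)$, so $(A, S_0)$ still has a positive super-diagonal entry in some position $(i,k)$ with $i < k$. For $(A_0, S_d)$: the off-diagonal part of $A_0$ is zero, so the off-diagonal part of $(A_0, S_d)$ equals the off-diagonal part of the shifted $S_d$, which lives entirely inside rows and columns indexed by $\{n-r+1, \dots, n\}$. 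The key observation is that $S_d$, being a nonminimal element of the Boolean-lattice-of-maps $\mathcal{T}(1,0^{r-1})$ with label $d \neq \emptyset$, has a positive entry in its first row — concretely, by the bijection $\Phi$ of Proposition \ref{boolean_lattice_bijection}, a positive entry in column $r - i + 1$ for each $i \in d$ — and moreover, its hook-sum-zero condition in those later columns forces a second positive entry in the same column, at positions still confined to the block. The upshot I want: the positive off-diagonal entries of $(A_0, S_d)$ all have both row and column index $\geq n-r+1$, whereas I must produce a positive off-diagonal entry of $(A, S_0)$ that escapes that block, or else show the two matrices disagree somewhere inside the block.

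The main obstacle — and the step I would spend the most care on — is that it is not automatic that $A$ has a positive off-diagonal entry outside the lower-right $r\times r$ block; a non-minimal $A \in P(\alpha)$ could in principle have all its "action" concentrated in that corner, so a naive index-comparison fails. To handle this I would argue instead at the level of a fixed entry: suppose for contradiction $(A_0, S_d) = (A, S_0)$ as matrices. Comparing diagonals, $A$ and $A_0$ agree off the block and inside the block we get $\mathrm{diag}(A) + \mathrm{diag}(\text{shift }S_0) = \mathrm{diag}(A_0) + \mathrm{diag}(\text{shift }S_d)$; comparing off-diagonal entries, the off-diagonal part of $A$ equals the off-diagonal part of the shift of $S_d$, which is supported in the block. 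So $A$ itself would have to be an element of $\phi_{?}(\mathcal{T}(\alpha))$ whose entire non-diagonal content comes from a nonminimal $S_d$ — but then running the argument of Proposition \ref{boolean_lattice_bijection}'s proof (the "second positive entry in each nontrivial column" fact, which uses that the relevant hook sums of $\alpha + e_{n-r+1}$ in those columns are $0$), together with the hook-sum constraints of $A$ as an element of $\mathcal{T}(\alpha)$ restricted to those same columns (whose hook sums are the $\alpha_j$'s, possibly $0$), yields a contradiction on the total column sum: the extra mass $S_d$ contributes to column $r-i+1$ of the block cannot be absorbed while keeping $A$'s hook sum equal to $\alpha_j$ and simultaneously equal to $\alpha_j + 0$. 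I would isolate this as the crucial inequality and verify it by a direct count of the one or two affected columns, rather than by any global argument; everything else is bookkeeping with the shift convention of Definition \ref{defn:equiv}.
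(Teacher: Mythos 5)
Your setup is fine: you correctly reduce the lemma to the entrywise comparison of the two $n\times n$ matrices $A_0+S_d$ and $A+S_0$ under the shift convention of Definition \ref{defn:equiv}, and you even record the diagonal identity $\mathrm{diag}(A)+\mathrm{diag}(S_0)=\mathrm{diag}(A_0)+\mathrm{diag}(S_d)$. The gap is in the step you yourself flag as crucial: the claimed contradiction ``on the total column sum'' via hook-sum bookkeeping does not exist. Every map $\phi_B$, including $\phi_\emptyset$ and $\phi_d$, raises the $(n-r+1)$st hook sum by exactly $1$ and leaves all other hook sums unchanged, so both $A_0+S_d$ and $A+S_0$ lie in $\mathcal{T}(\alpha+e_{n-r+1})$ and have identical hook sums in every row and column; no count of column masses can separate them, and the phrase ``hook sum equal to $\alpha_j$ and simultaneously equal to $\alpha_j+0$'' imposes no constraint at all. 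Indeed, if you solve for $A=A_0+S_d-S_0$, the right-hand side automatically has hook sum vector $\alpha$, so the only possible obstruction is nonnegativity of the entries of $A$ --- which your column-count argument never invokes. Your worry about whether a non-minimal $A$ has positive off-diagonal entries outside the lower-right block is likewise a red herring; non-minimality of $A$ is not needed anywhere.

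The repair is the single-entry comparison at the diagonal position $(n-r+1,n-r+1)$, which is exactly the paper's proof and falls out of the identity you already wrote: since $\alpha_{n-r+1}=0$, the minimal element $A_0$ has a $0$ there; since $S_d$ is non-minimal ($d\neq\emptyset$), its unit of flow has left the first diagonal position, so its $(1,1)$ entry (shifted to $(n-r+1,n-r+1)$) is $0$, giving $(A_0+S_d)_{(n-r+1,n-r+1)}=0$. On the other side $S_0=\varepsilon_{n-r+1,n-r+1}$ (shifted) contributes $1$ and $A$ has nonnegative entries, so $(A+S_0)_{(n-r+1,n-r+1)}\geq 1$; equivalently, equality of the two matrices would force the entry $A_{n-r+1,n-r+1}=-1$. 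So the two pairs lie in different equivalence classes. As written, your proposal misses this decisive inequality and replaces it with a claim that is not true, so it does not yet constitute a proof.
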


\begin{proof} 
We show that $A_0+S_d \neq A+S_0$ by showing they are not equal in the $(n-r+1)$st entry along the main-diagonal. That is, the values $(A_0 +S_d)_{(n-r+1,n-r+1)}$ and $(A+S_0)_{(n-r+1,n-r+1)}$ are different. On the RHS, we must have that this entry is equal to $0$ as it is $0$ in both matrices that we are adding. We know that this entry in $S_d$ is $0$ since otherwise we would necessarily have that $\phi_d$ is the minimal element. Considering this same entry for the LHS, we know that since $\phi_0$ has a $1$ in this particular entry, the non-negativity of elements in $A$ gives that this element on the LHS must be greater than or equal to $1$. Hence, we do not have matrix equality with the sum and thus the two elements are not equivalent under $\sim$.  See Figure \ref{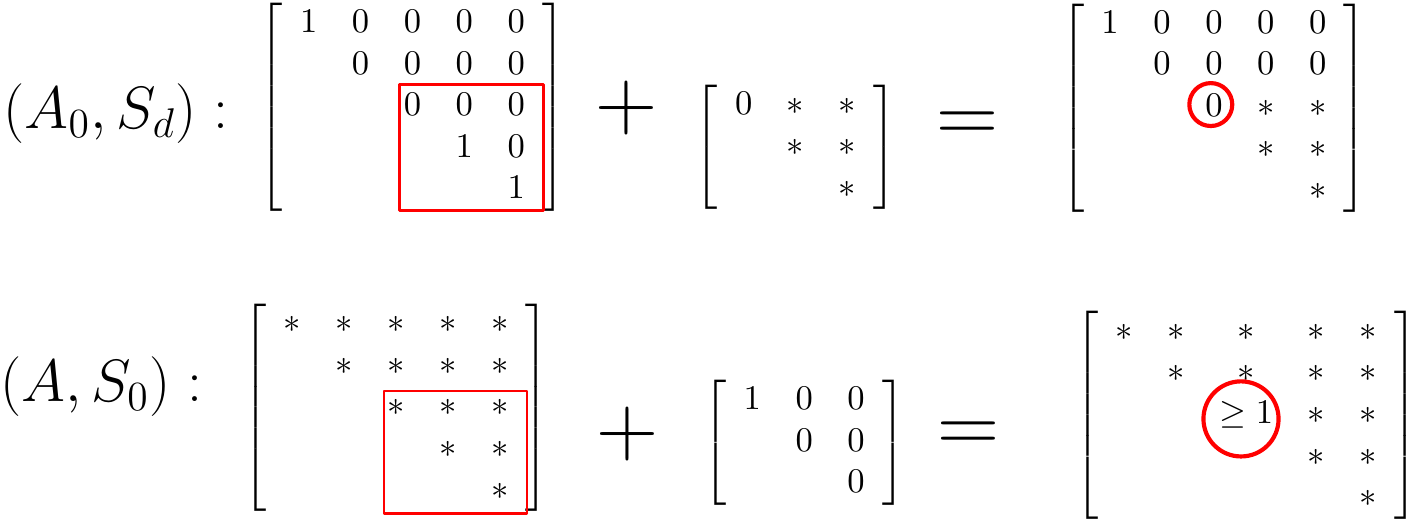} below for a visual representative of this argument in a particular case.  
\end{proof}

\begin{figure}[H]
\centering
\includegraphics[width=100mm, scale=1] {Equivclass_lemma.pdf}
\caption{The case when $n=5$ and $r=3$. Note that * indicates that we do not know the particular entry.}
\label{Equivclass_lemma.pdf}
\end{figure}

\noindent Now, we fix an element $X \in \sfrac {P(\alpha) \times B_{r-1}}{\sim}$. The next lemma dictates the elements that can be in the lower order ideal $L(X)$. For the rest of the section, we let $A_0$ be the minimal element of $P(\alpha)$ and $\phi_0$ be the minimal element of $B_{r-1}$.

\begin{defn}
 We say an element $A \in P(\alpha)$ is \textit{first coordinate isolated} if $(A, \phi_0$) is the only element in $L(X)$ with the first coordinate equal to $A$. Similarly, we say $\phi_d  \in B_{r-1}$ is \textit{second coordinate isolated} if  $(A_0, \phi_d )$ is the only element in $L(X)$ with the second coordinate equal to $\phi_d$.
\end{defn}

\begin{lemma} \label{second_sum_lemma}
At most one of the conditions hold. \\ 
(1) There exists a non-minimal $A \in P(\alpha)$ that is first coordinate isolated. \\ 
(2) There exists a non-minimal $\phi_d \in B_{r-1}$ that is second coordinate isolated.
\end{lemma}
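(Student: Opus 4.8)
The plan is to argue by contradiction: suppose both (1) and (2) hold, so there is a non-minimal $A \in P(\alpha)$ that is first coordinate isolated and a non-minimal $\phi_d \in B_{r-1}$ that is second coordinate isolated. The two witnessing elements of $L(X)$ are then $(A,\phi_0)$ and $(A_0,\phi_d)$. First I would record the structural consequence of being in $L(X)$: since $L(X)$ is a lower order ideal, every element below $(A,\phi_0)$ or below $(A_0,\phi_d)$ in $\sfrac{P(\alpha)\times B_{r-1}}{\sim}$ also lies in $L(X)$. The key point, which I would establish using the homogeneity of $\sim$ (condition (2) in the definition of homogeneous) together with the description of the cover relation on the product poset, is that $(A,\phi_0)$ having $A$ non-minimal forces a strictly smaller element of the form $(A',\phi_0)$ or $(A', \phi')$ with $A' < A$, and similarly $(A_0,\phi_d)$ with $\phi_d$ non-minimal forces a strictly smaller element $(A_0,\phi')$ with $\phi' < \phi_d$. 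The crux is then to produce, from these two chains going down, a \emph{common} element below both that is of a form contradicting one of the isolation hypotheses.

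Concretely, I would proceed as follows. By first coordinate isolation of $A$, the only element of $L(X)$ whose first coordinate is $A$ is $(A,\phi_0)$; but any cover relation in $P(\alpha)\times B_{r-1}$ that decreases only the second coordinate would produce $(A,\phi')$ with $\phi' \prec \phi_0$, impossible since $\phi_0$ is minimal in $B_{r-1}$, so the covers below $(A,\phi_0)$ in the quotient must come from decreasing the first coordinate (to some $A' \prec A$) or from the equivalence $\sim$ identifying $(A,\phi_0)$ with some $(B,S)$ having a strictly larger second coordinate. This is exactly where Lemma \ref{first_sum_lemma} enters: it guarantees that $(A,\phi_0) = (A,S_0)$ is \emph{not} equivalent to any $(A_0, S_d)$ with $S_d$ non-minimal, which rules out the ``cross'' identifications and pins down the combinatorial shape of the elements of $L(X)$ near these two witnesses. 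I would then track the matrix sums $A_0 + S_d$ and $A + S_0$ and show that minimality/isolation forces them to agree in enough entries that one of $A$, $\phi_d$ would actually be forced to be minimal — contradiction.

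The main obstacle I anticipate is bookkeeping the interaction between the equivalence relation $\sim$ and the order relation in the quotient: an element $X \in \sfrac{P(\alpha)\times B_{r-1}}{\sim}$ is an equivalence class, and ``$(A,\phi_0) \in L(X)$'' means the class of $(A,\phi_0)$ lies below $X$, so I must be careful to pass between representatives and classes when I apply homogeneity to descend in the order. The cleanest route is probably to fix the representative of each relevant class that has $\phi_0$ as its second coordinate (such a representative exists and is unique whenever the class contains any pair with second coordinate $\phi_0$, by the matrix-equality definition of $\sim$ and non-negativity), reducing everything to statements about the matrix $A + S$ and its entries, and then invoke Lemma \ref{first_sum_lemma} to exclude the degenerate identifications. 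Once the representatives are normalized this way, the contradiction should fall out from comparing the $(n-r+1,n-r+1)$ main-diagonal entry, exactly as in the proof of Lemma \ref{first_sum_lemma}, now applied one level down in each chain.
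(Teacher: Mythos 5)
There is a genuine gap. The heart of the paper's argument is an \emph{upward} path argument that your sketch never reaches: since $(A,\phi_0)$ and $(A_0,\phi_d)$ lie in the lower order ideal $L(X)$, each is joined to a representative of the class $X$ by a chain of covers staying inside $L(X)$. First coordinate isolation of $A$, together with the fact that $L(X)$ is a lower order ideal, forces the chain starting at $(A,\phi_0)$ to keep its second coordinate equal to $\phi_0$ at every step (if the second coordinate ever rose to some $\phi_1$, then $(A,\phi_1)$ would also lie in $L(X)$, contradicting isolation); symmetrically, the chain starting at $(A_0,\phi_d)$ must keep its first coordinate equal to $A_0$. Hence $X$ itself has two representatives, one of the form $(A_m,\phi_0)$ with $A_m$ non-minimal and one of the form $(A_0,\phi_{d_k})$ with $\phi_{d_k}$ non-minimal, and it is \emph{to these} that Lemma \ref{first_sum_lemma} is applied to get the contradiction. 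Your proposal instead applies Lemma \ref{first_sum_lemma} directly to the witnesses $(A,\phi_0)$ and $(A_0,\phi_d)$; but that only says these two witnesses lie in different equivalence classes, which is no contradiction, since nothing requires them to be equivalent --- they are merely both below $X$.

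Your descent-based plan also cannot be repaired as stated. Elements strictly below $(A,\phi_0)$ have first coordinate strictly smaller than $A$, and elements strictly below $(A_0,\phi_d)$ have first coordinate $A_0$, so going down never interacts with the isolation hypotheses (which constrain what lies in $L(X)$ \emph{with} first coordinate $A$ or second coordinate $\phi_d$), and the only common element below both witnesses is $\hat 0$, which contradicts nothing. Likewise, the claim that ``tracking the matrix sums $A_0+S_d$ and $A+S_0$'' will force them to agree in enough entries to make $A$ or $\phi_d$ minimal is unsupported: these sums represent distinct elements of $L(X)$ in general, and no entrywise comparison between them is forced by membership in $L(X)$. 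To complete the proof you need the transport-to-$X$ step described above; once you have the two incompatible representatives of $X$, transitivity of $\sim$ plus Lemma \ref{first_sum_lemma} finishes the argument exactly as in the paper.
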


\begin{proof}
We proceed by contradiction. Suppose that $\phi_d \in B_{r-1}$ is second coordinate isolated and $A\in P(\alpha)$ is first coordinate isolated. Now, since $(A, \phi_0)$ and $(A_0, \phi_d)$ are in $L(X)$, there exists a path in $L(X)$ between those elements and a member of the equivalence class $[X]$. By a path, we mean a sequence of covers in the poset. Consider such a path $\Gamma_1: (A, \phi_0) \mapsto (M_l, \phi_l) \sim X$. Note that this path stays in $L(X)$ and is guaranteed to exist by the fact that we are looking at elements in a lower order ideal. (See Figure \ref{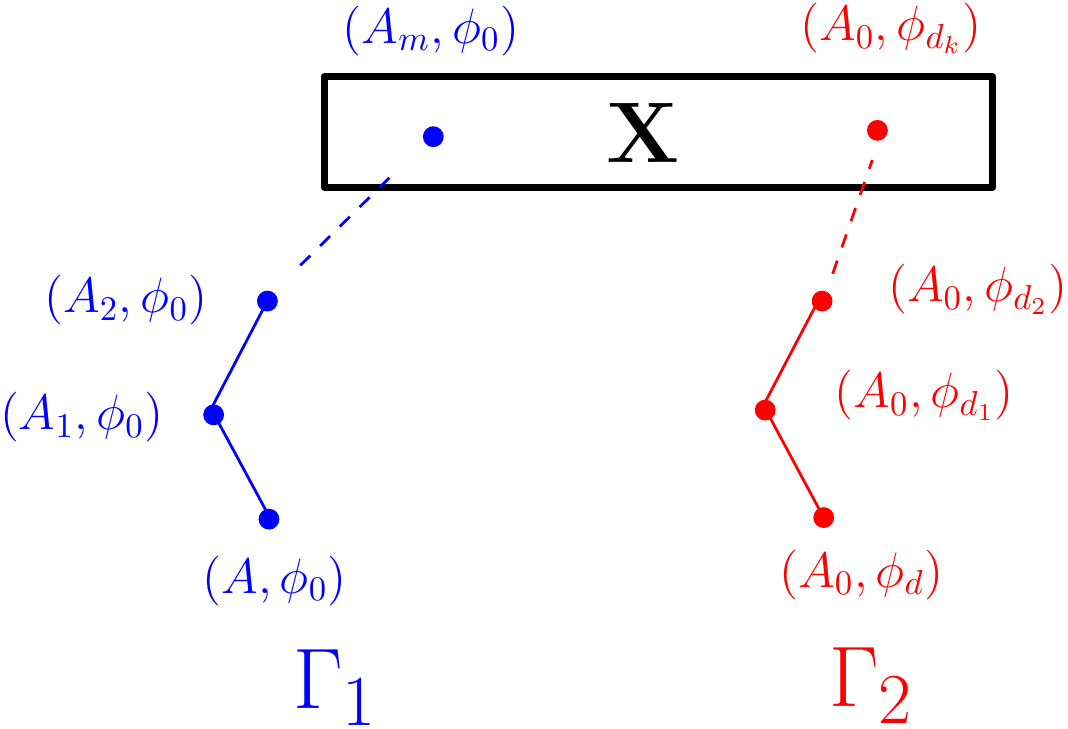} for a pictorial representation of these paths.)

We start at $(A, \phi_0)$. In the first cover in our path, we necessarily must change the first coordinate. This is because if we were to change the second coordinate, we would get that $(A, \phi_1) \in L(X)$ which contradicts our hypothesis that $A\in P(\alpha)$ is first coordinate isolated. Therefore, our first cover in $\Gamma_1$ must be $(A, \phi_d) \mapsto (A_1, \phi_0)$ for some $A_1 > A \in P(\alpha)$. Now, suppose our second cover resulted in our path $\Gamma_1$ going to $(A_1, \phi_1)$ for some $\phi_1 \in B_{r-1}$. This would imply that $(A_1, \phi_1) \in L(X)$ and hence $(A, \phi_1) \in L(X)$ as this is a lower order ideal. This contradicts our hypothesis that $A\in P(\alpha)$ is first coordinate isolated. Hence, our updated path is  $ \Gamma_1: (A, \phi_d) \mapsto (A_1, \phi_0) \mapsto (A_2, \phi_0)$ for some $A_2 > A_1 \in P(\alpha)$. Continuing this argument, we see that our path $\Gamma_1$ must have a constant second coordinate $\phi_0$. As a result, we have that $X \sim (M_l, \phi_l) \sim (A_m, \phi_0) $ where $A_m > \cdots > A_1 > A \in P(\alpha)$ for some $m \in \mathbb{N}$.  

Now, by a similar argument, we have that $\Gamma_2: (A_0, \phi_d) \mapsto (M_j, \phi_j)$ must have constant first coordinate $A_0$. As a result, we have that $X \sim (M_j, \phi_j) \sim (A_0, \phi_{d_k}) $ where for some $k \in \mathbb{N}$ $\phi_{d_k} > \cdots > \phi_{d_1} > \phi_d \in B_{r-1}$ . Note that transitivity implies that $(A_m, \phi_0) \sim (A_0, \phi_{d_k})$.  In Lemma \ref{first_sum_lemma}, we showed that these are necessarily in different equivalence classes, hence we have reached our contradiction.
\end{proof}

\begin{figure}[H]
\centering
\includegraphics[width=60mm, scale=1] {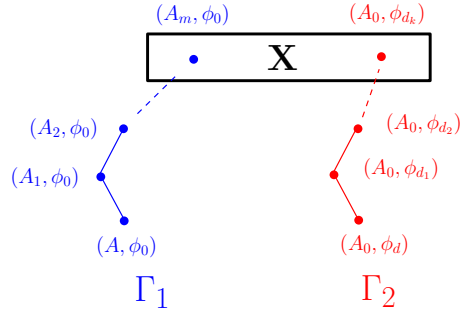}
\caption{Pictorial Representation of paths $\Gamma_1, \Gamma_2$ in proof of Lemma \ref{second_sum_lemma} }
\label{path_contradiction_poset.pdf}
\end{figure}

\begin{prop}
The summation condition \eqref{summation_eq} holds for the poset $\sfrac {P(\alpha) \times B_{r-1}}{\sim}$
\end{prop}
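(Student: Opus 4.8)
The plan is to fix a nonzero class $X \in \sfrac{P(\alpha)\times B_{r-1}}{\sim}$ and to show that $\sum_{Y\in L(X)}\mu(\hat 0,Y)=0$ by comparing $L(X)$ with the product poset $P(\alpha)\times B_{r-1}$, where both the M\"{o}bius function and the characteristic polynomial factor. The guiding observation is that a full ``product box'' inside $L(X)$ --- the set of all pairs $(A',\phi')$ with $A'\le A$ in $P(\alpha)$ and $\phi'\le \phi_d$ in $B_{r-1}$ --- contributes $\bigl(\sum_{A'\le A}\mu(\hat 0,A')\bigr)\bigl(\sum_{\phi'\le \phi_d}\mu(\hat 0,\phi')\bigr)$ to the sum, and this product vanishes as soon as $A$ or $\phi_d$ lies strictly above the relevant minimal element; since $X\neq \hat 0$, one of its two coordinates always supplies such an element, provided $L(X)$ can be organized into boxes of this kind.

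First I would apply Lemma \ref{second_sum_lemma} to reduce to a single case: at most one coordinate contributes a non-minimal isolated element to $L(X)$, and since the two coordinates play symmetric roles it suffices to treat the case in which no non-minimal $\phi_d\in B_{r-1}$ is second coordinate isolated (the mirror case, and the case where neither isolation condition holds, are handled identically). Then I would invoke Lemma \ref{first_sum_lemma}: it shows that the only way $\sim$ can merge two elements lying in $L(X)$ is to identify some $(A_0,\phi_d)$ with some $(A,\phi_0)$ with $A$ and $\phi_d$ both non-minimal, so that away from the fibres over the two minimal elements the quotient map is injective on each rank and $\mu_{\sfrac{Q}{\sim}}(\hat 0,[(A,\phi)])$ agrees with the product $\mu_{P(\alpha)}(\hat 0,A)\,\mu_{B_{r-1}}(\hat 0,\phi)$. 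Consequently $L(X)$ is assembled from product boxes whose pairwise overlaps are confined to those minimal fibres; the inclusion--exclusion over the boxes then collapses, and $\sum_{Y\in L(X)}\mu(\hat 0,Y)$ becomes a sum of terms each of the product shape above. In every such term one factor is a M\"{o}bius sum over a down-set whose maximum lies strictly above the corresponding $\hat 0$ --- this is where the nonzero-ness of $X$ together with the reduction from Lemma \ref{second_sum_lemma} enters --- so that factor, and hence that term, is zero; summing over all terms yields the summation condition.

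I expect the crux of the argument to be the middle step: because the quotient map is not a product of order-preserving maps, $L(X)$ is genuinely a union of boxes rather than a single box, and one must verify that the identifications force these boxes to overlap only along the minimal fibres, so that the alternating count really does reduce to a clean sum of product contributions. Lemma \ref{first_sum_lemma} pins down the shape of the identifications and Lemma \ref{second_sum_lemma} guarantees that the resulting discrepancy is confined to a single coordinate, so it can always be absorbed into the vanishing factor coming from the nonzero coordinate of $X$; if non-minimal isolated elements were permitted in both coordinates at once the two discrepancies would interact and the cancellation could fail, which is exactly the configuration that Lemma \ref{second_sum_lemma} excludes.
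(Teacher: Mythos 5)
Your strategy---split the M\"{o}bius sum over $L(X)$ into product-shaped pieces and let Lemmas \ref{first_sum_lemma} and \ref{second_sum_lemma} kill the surviving terms---is in the right spirit, but the step carrying all the weight is based on a misreading of Lemma \ref{first_sum_lemma}. You assert that it ``shows that the only way $\sim$ can merge two elements lying in $L(X)$ is to identify some $(A_0,\phi_d)$ with some $(A,\phi_0)$'' with $A,\phi_d$ non-minimal; the lemma states exactly the opposite, namely that such a pair is \emph{never} identified, and it says nothing about which identifications do occur. In fact typical identifications $A_1+S_1=A_2+S_2$ have all of $A_1,A_2,S_1,S_2$ non-minimal (otherwise the quotient would barely differ from the product), so your structural claims---that the quotient map is injective on each rank away from the minimal fibres, and that the product boxes overlap only along those fibres---are unsupported and false in general: two equivalent elements with non-minimal coordinates contribute boxes sharing every common lower bound. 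Without that claim the inclusion--exclusion does not collapse: pairwise intersections of boxes are products of down-sets of the form $\{A' : A'\le A_1,\ A'\le A_2\}$, and since $P(\alpha)$ is not a lattice such a down-set need not have a unique maximum, so its M\"{o}bius sum need not vanish and cannot simply be discarded. There is also a circularity risk in appealing to $\mu_{\sfrac{Q}{\sim}}$ agreeing with the product M\"{o}bius function: in the Hallam--Sagan framework the summation condition is precisely the hypothesis that licenses computing $\mu_{\sfrac{Q}{\sim}}$ class by class, so it cannot be used while verifying that condition.

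The paper's proof avoids the box decomposition. It writes the single sum $\sum_{(Y,S)\in L(X)}\mu\bigl((\hat{0},\hat{0}),(Y,S)\bigr)$ in the two ways \eqref{eq:first_coordinate} and \eqref{eq:second_coordinate}, grouping by one coordinate at a time; for a fixed value of one coordinate the corresponding fibre of $L(X)$ is a down-set with a unique maximum, so by the defining recursion of the M\"{o}bius function the inner sum vanishes unless that coordinate value is isolated in the sense defined before Lemma \ref{second_sum_lemma}. Lemma \ref{second_sum_lemma} (whose proof is where Lemma \ref{first_sum_lemma} is actually used) then guarantees that for at least one of the two groupings no non-minimal isolated element exists, so every inner sum is zero and \eqref{summation_eq} follows. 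If you want to salvage your write-up, replace the overlap claim by the unique-maximum-in-fibre statement and argue coordinatewise as above; the place where the nonzeroness of $X$ and Lemma \ref{second_sum_lemma} enter is in choosing which of the two groupings to use, not in absorbing discrepancies between overlapping boxes.
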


\begin{proof}
Fix a non-minimal equivalence class $X\sim[A,S] \in \sfrac {P(\alpha) \times B_{r-1}}{\sim} $. We must show that $$ \sum\limits_{(Y,S)\in L(X)} \mu((\hat{0},\hat{0}), (Y,S)) = 0 $$ Observe that we can write the LHS in the following two ways 

\begin{align}
& \sum\limits_{(Y,S)\in L(X)} \mu \bigl((\hat{0},\hat{0}), (Y,S)\bigr) = \sum_{S_i} \left( \sum\limits_{(Y,S_i)\in L(X)} \mu \bigl((\hat{0},\hat{0}\bigr), (Y,S_i)\bigr) \right) \label{eq:first_coordinate}  \\
& \sum\limits_{(Y,S)\in L(X)} \mu \bigl((\hat{0},\hat{0}), (Y,S)\bigr) = \sum_{Y_k} \left( \sum\limits_{(Y_k,S)\in L(X)} \mu\bigl((\hat{0},\hat{0}), (Y_k,S)\bigr) \right)\label{eq:second_coordinate}  
\end{align}
Now, since we are considering the lower order ideal of a product, it is easy to evaluate \newline $\sum\limits_{(Y_k,S)\in L(X)} \mu((\hat{0},\hat{0}), (Y_k,S))$. By the product structure of the lower order ideal $L(X)$, there is a unique maximum $Y_k \in P(\alpha)$ for elements in $L(X)$ with the second coordinate $S_i$. This follows by supposing that there are at least two incomparable relative maximal elements and using a very similar argument from the previous lemmas. By the recursive nature of the M\"{o}bius function, we get that so long as $Y_k$ is not the minimal element in $P(\alpha)$, the inner sum in Equation \eqref{eq:second_coordinate} is always $0$ and $$\sum\limits_{(Y_k,S)\in L(X)} \mu\bigl((\hat{0},\hat{0}), (Y_k,S)\bigr) =0$$ Similarly, so long as $S_i$ is not the minimal element in $B_{r-1}$ the inner sum in \eqref{eq:first_coordinate} is always $0$ so that $$\sum\limits_{(Y,S_i)\in L(X)} \mu\bigl((\hat{0},\hat{0}), (Y,S_i)\bigr) = 0$$  

Thus, it suffices to show that we do not have a $A\in P(\alpha)$ which is first coordinate isolated and a $\phi_d \in B_{r-1}$ which is second coordinate isolated. We showed this precise statement in Lemma \ref{second_sum_lemma}. Hence, \eqref{summation_eq} holds as we are either adding up all zeroes in \eqref{eq:first_coordinate} or \eqref{eq:second_coordinate}.  
\end{proof}

\noindent We are now ready to prove the lemma that we use in our main theorem. 

\begin{lemma} \label{inductive_step_method}
Let $\alpha =(\alpha_1, \ldots, \alpha_n) \in \{0,1\}^n$ where $\alpha_{n-r+1}=0$. Then  $$ \chi( P(\alpha + e_{n-r+1});q) =  \chi( P(\alpha);q) \cdot (q-1)^{r-1} $$
\end{lemma}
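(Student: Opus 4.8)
The plan is to assemble the pieces that have already been established in Section \ref{gencasethm} into a direct application of the Hallam–Sagan machinery (Lemma \ref{HS_method}). The setup is the product poset $Q = P(\alpha) \times B_{r-1}$ together with the equivalence relation $\sim$ of Definition \ref{defn:equiv}. First I would record that $\sfrac{Q}{\sim}$ is genuinely a poset, that $\sim$ is homogeneous, and that it preserves rank: the homogeneity conditions follow because $\hat 0 = (A_0, S_0)$ maps under $A_1 + S_1$ to the unique matrix with all relevant entries zero, so it sits alone in its class, and condition (2) of homogeneity is inherited from the product order since every $\phi_A$ is an order-preserving additive map; rank preservation holds because $\rho(A,S) = \rho(A) + \rho(S)$ equals the sum of the strictly-below-diagonal entries of $A+S$ (using the rank formula already proved for the Tesler poset), and equal matrices $A_1+S_1 = A_2+S_2$ have equal such sums. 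All of this is exactly what the paragraph preceding Lemma \ref{first_sum_lemma} asserts, so I would cite that.

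Next I would invoke the proposition just proved, namely that the summation condition \eqref{summation_eq} holds for $\sfrac{Q}{\sim}$, which is the substantive input and rests on Lemmas \ref{first_sum_lemma} and \ref{second_sum_lemma}. With homogeneity, rank-preservation, and the summation condition all in hand, Lemma \ref{HS_method} gives
$$\chi\!\left(\sfrac{P(\alpha) \times B_{r-1}}{\sim};q\right) = \chi\!\left(P(\alpha) \times B_{r-1};q\right).$$
Then I would apply fact (2) from Section \ref{subsec:hallam_sagan} — multiplicativity of $\chi$ under products — together with the known value $\chi(B_{r-1};q) = (q-1)^{r-1}$ (Corollary \ref{cor:boolean_lattice}, or the standard computation for the Boolean lattice) to get
$$\chi\!\left(P(\alpha) \times B_{r-1};q\right) = \chi(P(\alpha);q)\cdot(q-1)^{r-1}.$$

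Finally I must identify the quotient $\sfrac{Q}{\sim}$ with $P(\alpha + e_{n-r+1})$, so that fact (1) (isomorphism invariance of $\chi$) finishes the argument. For this I would use the Proposition stating $\bigcup_{A \subseteq [n]} \phi_A(\mathcal{T}(\alpha)) = \mathcal{T}(\alpha + e_{n-r+1})$ together with $Q_A \cong P(\alpha)$: the map sending the class of $(A,S_A)$ to the matrix $A + S_A \in \mathcal{T}(\alpha+e_{n-r+1})$ is well-defined and injective by the very definition of $\sim$, surjective by the union statement, and a poset isomorphism because the cover relations in $P(\alpha+e_{n-r+1})$ either move within a single copy $Q_A$ (matching covers in $P(\alpha)$) or move between copies along the Boolean-lattice structure of the maps $\phi_A$. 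Chaining the three displayed equalities with $\chi(\sfrac{Q}{\sim};q) = \chi(P(\alpha+e_{n-r+1});q)$ yields the claim. I expect the main obstacle to be precisely this last identification — checking carefully that the order structure on $\sfrac{Q}{\sim}$ really is the Tesler-poset order on $\mathcal{T}(\alpha + e_{n-r+1})$ and not merely a bijection of underlying sets — but the homogeneity and rank-preservation already proved, plus the explicit description of $\phi_A$ as shifted matrix addition, should make this routine rather than delicate.
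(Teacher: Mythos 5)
Your proposal is correct and follows essentially the same route as the paper: it assembles the already-established homogeneity, rank-preservation, and summation condition to invoke Lemma \ref{HS_method}, uses multiplicativity of $\chi$ with $\chi(B_{r-1};q)=(q-1)^{r-1}$, and identifies $\sfrac{P(\alpha)\times B_{r-1}}{\sim}$ with $P(\alpha+e_{n-r+1})$ via the map $(A,S)\mapsto A+S$ together with the cover-relation check. Your treatment of the order-preservation step and of rank preservation is, if anything, slightly more explicit than the paper's.
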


\begin{proof}
It suffices to show that $P(\alpha + e_{n-r+1}) \cong \sfrac {P(\alpha) \times B_{r-1}}{\sim}$. We have already shown that there is a bijection between the elements of the poset. We now must show that this bijection is order-preserving. This follows by the fact that both sets have the same Tesler cover relation and by the definition of cover in a product poset. In the forward direction, this follows immediately by the definition of cover in a product poset. In the other direction, if we have a cover in $P(\alpha + e_{n-r+1})$, then we must have a non-zero element in the same spot in one of the two coordinates, and hence can create a cover in this coordinate.   
Thus, we have that  $P(\alpha + e_{n-r+1})\cong \sfrac {P(\alpha) \times B_{r-1}}{\sim}$, and using Lemma \ref{HS_method} we get
\begin{align*}
\chi(P(\alpha + e_{n-r+1});q) &= \chi( \sfrac {P(\alpha) \times B_{r-1}}{\sim};q) \\ 
&= \chi(P(\alpha)\times B_{r-1};q) \\ 
&= \chi(P(\alpha);q) \cdot \chi(B_{r-1};q) = \chi(P(\alpha);q) \cdot (q-1)^{r-1}  
\end{align*}
\end{proof}
\noindent We are now ready to state and prove our main theorem. Note that we have a slight modification in our notation for the hook sum vector $\alpha$ for a more clean result. 

\begin{thm} \label{generalized_theorem}
Let $\alpha =(\alpha_{n-1},\alpha_{n-2}, \ldots, \alpha_1, \alpha_0) \in \{0,1\}^n$ where $\alpha_{n-1}=\alpha_0=1$. Then, letting $w(\alpha)= \sum_{i=0}^{n-1} i\cdot \alpha_i$ we have that $$ \chi(P(\alpha);q) = (q-1)^{w(\alpha)}$$  
\end{thm}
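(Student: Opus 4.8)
The plan is to prove the theorem by induction on the number of nonzero entries in the hook sum vector $\alpha$, peeling off one $1$ at a time using Lemma \ref{inductive_step_method}. More precisely, I would write $\alpha = (\alpha_{n-1}, \ldots, \alpha_0)$ with $\alpha_{n-1} = \alpha_0 = 1$, and let $S = \{ i : \alpha_i = 1, \ 0 \le i \le n-2 \}$ be the set of positions of the remaining $1$'s other than the top one. The base case is when $S$ consists of just the position $0$, i.e.\ $\alpha$ has $1$'s only in positions $n-1$ and $0$; in the notation of the earlier parts of the paper this is the vector $a_n = (1,0,\ldots,0,1)$, and Corollary \ref{cor:boolean_lattice} gives $\chi(P(a_n);q) = (q-1)^{n-1}$. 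Here $w(a_n) = (n-1)\cdot 1 + 0 \cdot 1 = n-1$, so the formula holds.

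For the inductive step, suppose the formula holds for every binary hook sum vector (with leading and trailing entry $1$) having fewer than $|S|$ interior $1$'s. Given $\alpha$ with $|S| \ge 2$ interior $1$'s, pick any $j \in S$ with $j \ne 0$ — say the largest such $j$ — and let $\alpha' = \alpha - e_{j}$ (reindexing: in the coordinates of Lemma \ref{inductive_step_method}, where the hook sum vector is written $(\alpha_1, \ldots, \alpha_n)$ and $\alpha_{n-r+1} = 0$, this corresponds to taking $r = j+1$, so that $\alpha = \alpha' + e_{n-r+1}$ with $\alpha'_{n-r+1} = 0$). By induction, $\chi(P(\alpha');q) = (q-1)^{w(\alpha')}$, and Lemma \ref{inductive_step_method} gives
\begin{equation*}
\chi(P(\alpha);q) = \chi(P(\alpha');q) \cdot (q-1)^{r-1} = (q-1)^{w(\alpha')} \cdot (q-1)^{j} = (q-1)^{w(\alpha') + j}.
\end{equation*}
Since $w(\alpha) = w(\alpha') + j$ (adding back the $1$ in position $j$ contributes exactly $j \cdot 1$ to the weighted sum), this equals $(q-1)^{w(\alpha)}$, completing the induction. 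The statement $\chi(P(\alpha);q) = (q-1)^{w(\alpha)}$ for general $\alpha \in \{0,1\}^n$ (as in Theorem \ref{generalized_armstrong_conj}, without the requirement $\alpha_0 = 1$) then follows from the remark in Section \ref{sec:Tesler_poset} that a trailing $0$ in the hook sum vector does not change the poset, so one may always assume $\alpha_0 = 1$ (or that $\alpha = 0$, a trivial case), and note that appending or deleting a trailing $0$ also leaves $w(\alpha)$ unchanged.

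The only real subtlety is bookkeeping with the two indexing conventions: Lemma \ref{inductive_step_method} uses $\alpha = (\alpha_1,\ldots,\alpha_n)$ with the distinguished zero entry at position $n-r+1$, while the theorem uses $\alpha = (\alpha_{n-1}, \ldots, \alpha_0)$ indexed by "weight" $i$. I would verify carefully that a $1$ sitting in weight-position $i$ corresponds to $r - 1 = i$ in the lemma, so that the factor $(q-1)^{r-1}$ contributed by restoring that entry matches the increment $i$ in $w(\alpha)$; this is the identity $w(\alpha' + e_i) = w(\alpha') + i$. I do not expect any genuine obstacle here — all the hard work (verifying the equivalence relation is homogeneous, rank-preserving, and satisfies the summation condition) has already been done in establishing Lemma \ref{inductive_step_method}. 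It is worth recording the consistency check that $w(1,1,\ldots,1) = \sum_{i=0}^{n-1} i = \binom{n}{2}$, which recovers Armstrong's Conjecture \ref{conj:armstrong}.
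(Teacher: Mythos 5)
Your proposal is correct and follows essentially the same route as the paper: both arguments anchor on the base case $(1,0,\ldots,0,1)$, whose poset is the Boolean lattice with characteristic polynomial $(q-1)^{n-1}$, and then repeatedly apply Lemma \ref{inductive_step_method}, with a $1$ in weight-position $i$ contributing a factor $(q-1)^{i}$ (your indexing check $r-1=i$ is the right one). The only difference is cosmetic — the paper iterates upward, adding the interior $1$'s one at a time, while you phrase it as induction that peels them off — so there is nothing further to flag.
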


\begin{proof}
We iterate Lemma \ref{inductive_step_method} for each $\alpha_i=1$ where $i \in [2,n-1]$. Note that if $\alpha_i =0$, we are not changing the poset, so the characteristic polynomial is unchanged. One way of representing this using Lemma \ref{inductive_step_method} is to multiply by $(q-1)^{\alpha_i(n-i)}$. This multiplies the characteristic polynomial of the unchanged poset by $1$ when $\alpha_i=0$ and by the desired amount when $\alpha_i=1$. We start with the hook sum vector $\alpha_{n-1} + \alpha_0$ and then apply the Lemma \ref{inductive_step_method} to get the characteristic polynomial for  $\alpha_{n-1} + \alpha_{1} +  \alpha_0$ as we did in our motivating example. We then do the same thing to get $\alpha_{n-1} + \alpha_{2} +\alpha_{1} +  \alpha_0$, and iterate until we have the characteristic polynomial of the poset corresponding to the hook sum vector $\alpha$.  As a result, we get that 
\begin{align*}
\chi( P(\alpha_{n-1} + \alpha_{1} +  \alpha_0);q) &= (q-1)^{n-1} \cdot (q-1)^{\alpha_1} \\
\chi( P(\alpha_{n-1} + \alpha_2 + \alpha_{1} +  \alpha_0);q) &= (q-1)^{n-1} \cdot (q-1)^{\alpha_1} \cdot (q-1)^{2\alpha_2} \cdot  \\
&\text{  }  \vdots \\
\chi(P(\alpha);q) &= (q-1)^{n-1} \cdot (q-1)^{\alpha_1} \cdots (q-1)^{(n-2)\alpha_{n-2}} \\
\end{align*}
Collecting powers we obtain $(q-1)^{w(\alpha)}$ as desired.

\end{proof}

\begin{corollary}\label{cor:settled_armconj}
Let $P(1^n)$ be the Tesler poset and $w(\alpha)$ be as above. Then $$\chi( P(1^n);q) = (q-1)^{\binom{n}{2}} $$
\end{corollary}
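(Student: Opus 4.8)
The plan is to derive Corollary \ref{cor:settled_armconj} as an immediate specialization of Theorem \ref{generalized_theorem}. First I would observe that the Tesler poset $P(1^n)$ is by definition the poset $P(\alpha)$ associated to the hook sum vector $\alpha = (1,1,\ldots,1) \in \{0,1\}^n$, written in the indexing convention of Theorem \ref{generalized_theorem} as $\alpha = (\alpha_{n-1}, \alpha_{n-2}, \ldots, \alpha_1, \alpha_0)$ with every $\alpha_i = 1$. In particular the boundary hypotheses $\alpha_{n-1} = \alpha_0 = 1$ of that theorem are satisfied, so the theorem applies directly and yields $\chi(P(1^n);q) = (q-1)^{w(\alpha)}$.

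It then remains only to compute $w(\alpha)$ for the all-ones vector. By definition $w(\alpha) = \sum_{i=0}^{n-1} i \cdot \alpha_i = \sum_{i=0}^{n-1} i = \frac{(n-1)n}{2} = \binom{n}{2}$. Substituting this exponent into the conclusion of Theorem \ref{generalized_theorem} gives $\chi(P(1^n);q) = (q-1)^{\binom{n}{2}}$, which is exactly the claimed identity (and, as noted in the discussion following the statement of Theorem \ref{generalized_armstrong_conj}, is precisely Armstrong's Conjecture \ref{conj:armstrong}).

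There is essentially no obstacle here: the corollary is a one-line bookkeeping consequence of the main theorem, the only content being the arithmetic identity $\sum_{i=0}^{n-1} i = \binom{n}{2}$ and the observation that the all-ones hook sum vector meets the hypotheses. If anything, the only point requiring a word of care is matching the two indexing conventions for the hook sum vector used earlier in the paper (the $(\alpha_1,\ldots,\alpha_n)$ convention of the definitions versus the reversed $(\alpha_{n-1},\ldots,\alpha_0)$ convention of Theorem \ref{generalized_theorem}), but for the all-ones vector these coincide, so no subtlety arises.

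\begin{proof}
The Tesler poset $P(1^n)$ is the poset $P(\alpha)$ for the hook sum vector $\alpha = (1,1,\ldots,1) \in \{0,1\}^n$. In the notation of Theorem \ref{generalized_theorem}, writing $\alpha = (\alpha_{n-1}, \ldots, \alpha_0)$, we have $\alpha_i = 1$ for all $0 \le i \le n-1$; in particular $\alpha_{n-1} = \alpha_0 = 1$, so Theorem \ref{generalized_theorem} applies and gives $\chi(P(1^n);q) = (q-1)^{w(\alpha)}$. Finally,
$$w(\alpha) = \sum_{i=0}^{n-1} i \cdot \alpha_i = \sum_{i=0}^{n-1} i = \binom{n}{2},$$
so $\chi(P(1^n);q) = (q-1)^{\binom{n}{2}}$, as claimed.
\end{proof}
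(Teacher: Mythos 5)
Your proof is correct and matches the paper's own argument: the paper likewise deduces the corollary directly from Theorem \ref{generalized_theorem} by noting $w(1^n)=\binom{n}{2}$. The extra remark about the indexing convention is harmless and the computation $\sum_{i=0}^{n-1} i = \binom{n}{2}$ is exactly the needed step.
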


\begin{proof}
Since $w(1^n)= \binom{n}{2} $, the result follows by Theorem \ref{generalized_theorem}.
\end{proof}

Note that Theorem \ref{generalized_theorem} also generalizes the well-known result on the Boolean lattice result as the Boolean lattice is isomorphic to the Tesler poset $P(1,0,\ldots, 0)$. We see this by noting that $$ w(1,0,\ldots,0) = w(1,0,\ldots,0,1) = n-1 $$

\begin{remark}
This result also gives another method of generating Tesler matrices $\mathcal{T}(1^n)$ that is different from the Tesler generating algorithm discussed in Section \ref{tesler_generating_process}. While this method is certainly less efficient that the Tesler generating algorithm, it is possible to construct the set $\mathcal{T}(1^n)$ in this manner without knowledge of the sets $\mathcal{T}(1^1), \ldots, \mathcal{T}(1^{n-1})$ and only using the well known Boolean lattice. 
\end{remark}

A natural question is to see if this result extends to all generalized Tesler matrices. In the general case, Lemma \ref{first_sum_lemma} and Lemma \ref{second_sum_lemma} do not hold. For other $\alpha \in \mathbb{N}^n$, we get other factors besides $(q-1)$ as we see in \eqref{eq:otherfactors}. Moreover, in the general case, the characteristic polynomial need not factor over $\mathbb{Z}$ as we see in \eqref{eq:doesnotfactor}.

\begin{ex}
Let $\alpha_1=(1,2,3)$ and $\alpha_2=(2,1,1,1)$ and consider the posets $P(1,2,3)$ and $P(2,1,1,1)$. (For the Hasse diagram of the poset $P(1,2,3)$, see Figure \ref{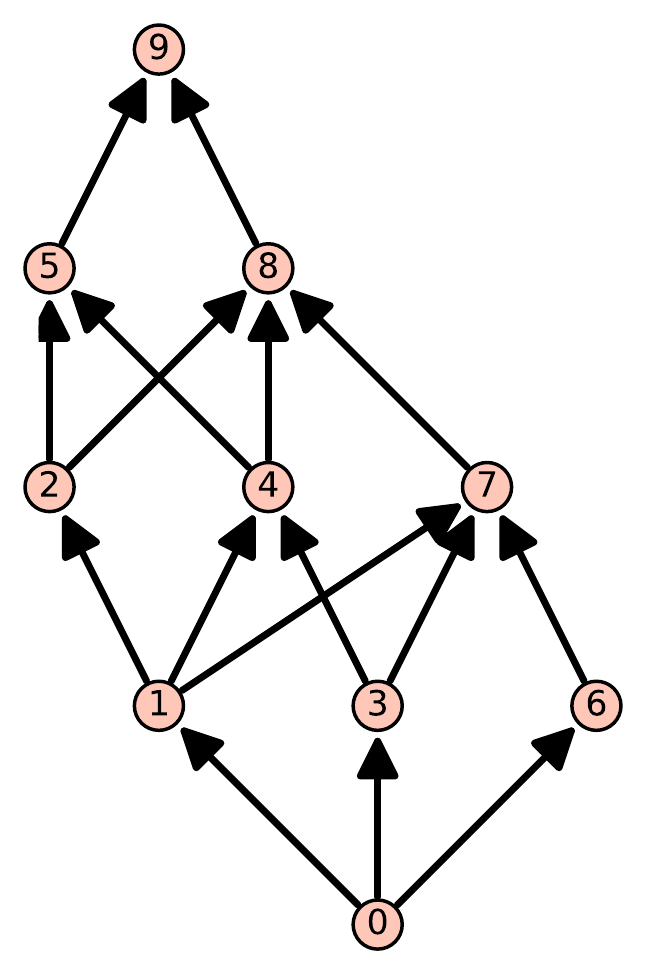} in the Appendix.) Then, one can check that   
\begin{equation}\label{eq:otherfactors}
\chi(P(\alpha_1);q) = q(q-1)^3 
\end{equation}

\begin{equation}\label{eq:doesnotfactor}
\chi(P(\alpha_2);q)= (q-1)^4 (q^5-2q^4+4q^3-6q^2+3q+1)
\end{equation}
\end{ex}

\noindent  However, we do have the following divisibility results as corollaries to Theorem \ref{generalized_theorem}. The question of $(q-1)$ divisibility in the Tesler poset was initially considered by Drew Armstrong and then communicated in \protect\cite{DARM}.

\begin{corollary}\label{cor:leadingbinword}
Let $\alpha \in \mathbb{N}^{k}$ and consider the Tesler poset $P(1,\alpha)$, then $$ (q-1)^{k} \text{ divides } \chi(P(1,\alpha) ;q) $$ 
\end{corollary}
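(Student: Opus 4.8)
The plan is to reduce the statement to the main Theorem \ref{generalized_theorem} by exhibiting a poset isomorphism between $P(1,\alpha)$ and a product $P(\beta) \times B_k$ for a suitable binary hook sum vector $\beta$, or more directly by adapting the iterated-Hallam--Sagan argument used in the proof of Theorem \ref{generalized_theorem}. Concretely, write the hook sum vector as $(1,\alpha) = (1, \alpha_1, \ldots, \alpha_k) \in \mathbb{N}^{k+1}$, where the leading $1$ occupies position $1$. The key observation is that a generalized Tesler matrix with hook sum vector $(1,\alpha)$ decomposes along its first row and first column: the first row/column interacts with the rest only through the single degree of freedom recorded by whether mass is pushed out of the $(1,1)$ entry, exactly as in the $r=2$ case of the motivating example in Section \ref{sec:mainthm}. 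Iterating this across the first $k+1$ coordinates, the first coordinate being $1$ contributes a factor of $B_k$ in the product decomposition, hence a factor of $(q-1)^k$ in the characteristic polynomial.

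First I would make precise the analogue of Lemma \ref{inductive_step_method} that is needed here. Set $\beta = (0, \alpha_1, \ldots, \alpha_k)$, i.e. $(1,\alpha) = \beta + e_1$ in the indexing where $e_1$ is the first elementary vector (so $n-r+1 = 1$, i.e. $r = k+1$). The argument in Lemma \ref{inductive_step_method} shows $\chi(P(\beta + e_1);q) = \chi(P(\beta);q)\cdot(q-1)^{k}$ provided the maps $\phi_A : \mathcal{T}(\beta) \to \mathcal{T}(\beta + e_1)$ form a Boolean lattice $B_k$ and the equivalence relation $\sim$ on $P(\beta)\times B_k$ is homogeneous, rank-preserving, and satisfies the summation condition. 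The point is that the proofs of Lemma \ref{first_sum_lemma} and Lemma \ref{second_sum_lemma}, and of the summation-condition proposition, only used that the hook sum entry being raised was $0$ and the \emph{structure} of the maps $\phi_A$ (an element of $\mathcal{T}(1,0^{r-1})$ acting by shifted matrix addition) — they did not use that the remaining entries $\alpha_1, \ldots, \alpha_k$ were binary. So those lemmas go through verbatim with $\beta$ in place of $\alpha$, giving the factorization $\chi(P(1,\alpha);q) = (q-1)^k \cdot \chi(P(\beta);q) = (q-1)^k\cdot\chi(P(0,\alpha_1,\ldots,\alpha_k);q)$. Finally, observe that by the Tesler generating algorithm (the same observation as in the remark that $P(1,0,\ldots,0,1)\cong P(1,0,\ldots,0)$, applied to the \emph{leading} coordinate rather than the trailing one — a leading $0$ in the hook sum vector just forces the first row and column to vanish and does not affect the poset), $P(0,\alpha_1,\ldots,\alpha_k) \cong P(\alpha_1,\ldots,\alpha_k) = P(\alpha)$, so $\chi(P(1,\alpha);q) = (q-1)^k \cdot \chi(P(\alpha);q)$, which gives the divisibility claim.

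The main obstacle I anticipate is verifying that Lemma \ref{second_sum_lemma} and the summation-condition proposition really do survive when the non-leading coordinates $\alpha_i$ are arbitrary non-negative integers rather than $0$ or $1$; in particular one must check that the "unique relative maximum" claim in the summation proof — that $L(X)$ restricted to a fixed second coordinate $S_i$ has a unique maximal first coordinate — still holds, since $P(\alpha)$ need not be as well-behaved as a product of Boolean lattices. I expect this to be fine because the argument there is purely about the product structure of a lower order ideal in $P(\beta)\times B_k$ together with Lemma \ref{first_sum_lemma}, both of which are insensitive to the arithmetic of the $\alpha_i$'s; but it should be stated carefully. A cleaner alternative, which I would present if the direct route gets technical, is simply to assert $P(1,\alpha) \cong \sfrac{P(\alpha)\times B_k}{\sim}$ by the same bijection-plus-order-preservation argument as in Lemma \ref{inductive_step_method} (Proposition before it establishes the bijection on underlying sets; order preservation follows from both posets carrying the Tesler cover relation), and then invoke Lemma \ref{HS_method} and fact (2) on characteristic polynomials of products to conclude $\chi(P(1,\alpha);q) = \chi(P(\alpha);q)\cdot(q-1)^k$. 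Either way the conclusion $(q-1)^k \mid \chi(P(1,\alpha);q)$ is immediate.
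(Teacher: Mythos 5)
Your argument is correct and is essentially the paper's own proof: the paper likewise forms the product $P(\alpha)\times B_{k}$ with the equivalence relation of Definition \ref{defn:equiv}, observes that Lemma \ref{first_sum_lemma} and Lemma \ref{second_sum_lemma} go through because only the raised hook sum entry needs to be $0$ (the other entries being arbitrary in $\mathbb{N}$ is harmless), and then applies Lemma \ref{HS_method} to get $\chi(P(1,\alpha);q)=(q-1)^{k}\chi(P(0,\alpha);q)$. Your additional remark that $P(0,\alpha)\cong P(\alpha)$ is a fine (and correct) refinement, though it is not needed for the divisibility claim itself.
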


\begin{proof}
We start off with the posets $P(\alpha)$ and $B_{n-1}$ and consider the product $P(\alpha) \times B_{n-1}$ and apply the same equivalence relation from Definition \ref{defn:equiv} and note that the results from  Lemma \ref{first_sum_lemma} and Lemma \ref{second_sum_lemma} also hold in this case. As a result, we can use Lemma \ref{HS_method} and Proposition \ref{boolean_lattice_bijection} to get that  $$\chi(P(1,\alpha) ;q)= \chi( \sfrac{ B_{n-1} \times P(\alpha)}{\sim};q) = \chi(B_{n-1} \times P(\alpha) ;q) = (q-1)^{n-1} \cdot \chi(P(0,\alpha);q)$$  \end{proof}

We can now use Corollary \ref{cor:leadingbinword} and Lemma \ref{inductive_step_method} to get some results about factors of the characteristic polynomial when there are leading and trailing binary words in the hook sum vector.

\begin{corollary}\label{cor:binarywords}
Let $\alpha \in \mathbb{N}^{n-k}$ and $\beta=(\beta_1, \ldots, \beta_k) \in \{0,1 \}^k$ and consider the Tesler posets $P(\alpha, \beta)$ and $P(\beta, \alpha)$. Then, letting $w_1(\beta) = \sum\limits_{i=1}^{k} (n-i)\beta_i $ and $w_2(\beta)=  \sum\limits_{i=1}^{k} (k-i)\beta_i $: 

\begin{equation}\label{eq:leadingzeroes}
 (q-1)^{w_1(\beta)} \text{ divides } \chi(P(\beta,\alpha) ;q)
 \end{equation}

\begin{equation}\label{eq:trailingzeroes}
(q-1)^{w_2(\beta)} \text{ divides } \chi(P(\alpha,\beta) ;q)
\end{equation}
\end{corollary}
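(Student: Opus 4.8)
The plan is to treat the two divisibilities separately, each by iterating a tool already established: Corollary~\ref{cor:leadingbinword} for \eqref{eq:leadingzeroes} and Lemma~\ref{inductive_step_method} for \eqref{eq:trailingzeroes}. In both cases the underlying mechanism is the same: switching a single coordinate of the hook sum vector from $0$ to $1$ multiplies the characteristic polynomial by $(q-1)$ raised to the power ``size of the Boolean map-poset minus one'', and that exponent is exactly the weight the coordinate contributes to $w_1$ (resp.\ $w_2$); a coordinate equal to $0$ multiplies by $(q-1)^0=1$, so one may record the total product uniformly over all $k$ coordinates of $\beta$.

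For \eqref{eq:leadingzeroes} I would induct on $k$, peeling off the leading coordinate $\beta_1$. If $\beta_1=1$, Corollary~\ref{cor:leadingbinword} applied with the tail $(\beta_2,\dots,\beta_k,\alpha)\in\mathbb N^{\,n-1}$ playing the role of ``$\alpha$'' gives $\chi(P(\beta,\alpha);q)=(q-1)^{n-1}\,\chi(P(0,\beta_2,\dots,\beta_k,\alpha);q)$; since a hook sum vector with a leading $0$ forces its first row and column to vanish --- and hence no cover relation can involve them --- we have $P(0,\gamma)\cong P(\gamma)$, reducing this to $(q-1)^{n-1}\chi(P(\beta_2,\dots,\beta_k,\alpha);q)$. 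If $\beta_1=0$ the same isomorphism reduces directly to $\chi(P(\beta_2,\dots,\beta_k,\alpha);q)$ with no new factor. Either way the inductive hypothesis applies to the length-$(n-1)$ vector $(\beta_2,\dots,\beta_k,\alpha)$, in which $\beta_j$ sits at position $j-1$ and so contributes the factor $(q-1)^{(n-j)\beta_j}$; collecting exponents yields $\chi(P(\beta,\alpha);q)=(q-1)^{(n-1)\beta_1+\sum_{j\ge 2}(n-j)\beta_j}\chi(P(\alpha);q)=(q-1)^{w_1(\beta)}\chi(P(\alpha);q)$, and since $\chi(P(\alpha);q)\in\mathbb Z[q]$ the divisibility follows.

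For \eqref{eq:trailingzeroes} I would instead build the vector $(\alpha,\beta_1,\dots,\beta_k)$ up from $(\alpha,0^k)$ by turning on the coordinates in the positions $(n-k)+1,(n-k)+2,\dots,n$ one at a time. The step that sets the coordinate in position $(n-k)+i$ is an instance of Lemma~\ref{inductive_step_method} with $r=k-i+1$, so the Boolean map-poset $\mathcal T(1,0^{r-1})$ occupies the bottom-right $(k-i+1)\times(k-i+1)$ block of the matrix; at that moment every coordinate inside that block is $0$, while the non-binary part $\alpha$ and all previously-set bits lie strictly to the left of the block. Each such step multiplies $\chi$ by $(q-1)^{(r-1)\beta_i}=(q-1)^{(k-i)\beta_i}$ (in particular the $i=k$ step multiplies by $1$, as it must, since the last coordinate of a hook sum vector is irrelevant). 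After all $k$ steps, $\chi(P(\alpha,\beta);q)=(q-1)^{\sum_{i=1}^k(k-i)\beta_i}\chi(P(\alpha,0^k);q)=(q-1)^{w_2(\beta)}\chi(P(\alpha,0^k);q)$, which gives the divisibility; if one wishes, $P(\alpha,0^k)\cong P(\alpha)$ by discarding the last coordinate repeatedly.

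The one genuine point to check --- everything else being bookkeeping --- is that Lemma~\ref{inductive_step_method} and the technical Lemmas~\ref{first_sum_lemma} and \ref{second_sum_lemma} behind it remain valid when the hook sum vector is not globally binary. I would verify this by re-reading those proofs and isolating the hypotheses actually used: Lemma~\ref{first_sum_lemma} needs only that the $(n-r+1)$st diagonal entry of the minimal element of the base poset is $0$ and that a non-minimal element of the Boolean map-poset has a $0$ in its $(1,1)$ entry, after which Lemma~\ref{second_sum_lemma} and the summation-condition argument are purely order-theoretic, resting on the product structure $P(\gamma)\times B_{r-1}$ together with Lemma~\ref{first_sum_lemma}. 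In each application above the coordinate being switched is $0$ in the minimal element and the map-poset is a Boolean lattice, so both hypotheses hold verbatim --- this is precisely the observation already invoked in the proof of Corollary~\ref{cor:leadingbinword}, where the tail of the hook sum vector was an arbitrary element of $\mathbb N^{k}$ --- and the non-binary coordinates, lying outside the map-poset block, never enter the argument. Hence the statement really is a corollary, requiring no machinery beyond what is in hand.
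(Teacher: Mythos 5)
Your proposal is correct and follows essentially the same route as the paper: iterate Corollary~\ref{cor:leadingbinword} through the leading binary word for \eqref{eq:leadingzeroes} and Lemma~\ref{inductive_step_method} through the trailing binary word for \eqref{eq:trailingzeroes}, collecting one factor of $(q-1)$ to the appropriate power per coordinate equal to $1$. The only difference is that you make explicit some points the paper leaves implicit (the isomorphism $P(0,\gamma)\cong P(\gamma)$, the treatment of zero coordinates, and the observation that Lemmas~\ref{first_sum_lemma} and \ref{second_sum_lemma} only require the flipped coordinate to vanish rather than a globally binary hook sum vector), which is a welcome tightening rather than a new approach.
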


\begin{proof}
First, we look at the statement in \eqref{eq:leadingzeroes}. We iterate through our binary word $\beta$ by starting with $\beta_k$ and ending with $\beta_1$. At the $i$th step, by Corollary \ref{cor:leadingbinword}, we get a factor of $(q-1)^{n-k+i-1}$. Collecting powers of $(q-1)$, and then reordering the sum gives us the desired result of a factor of $  (q-1)^{w_1(\beta)}$.
Next, we consider the statement in \eqref{eq:trailingzeroes}. We iterate through our binary word $\beta$ by starting with $\beta_1$ and ending with $\beta_k$ and use the result from Lemma \ref{inductive_step_method}. After collecting powers, we get a factor of $(q-1)^{w_2(\beta)}$.
\end{proof}

\section{Armstrong polynomial}\label{sec:armpoly}
In this section, we introduce the Armstrong polynomial to encode the growth of the number of Tesler matrices. Questions regarding asymptotics of the Kostant partition function and hence generalized Tesler matrices have recently appeared in a {\tt Math Overflow} article. \protect\cite{DSTE} \newline  Let $A= (a_{i,j}) \in \mathcal{T}(1^n)$ and $d_i=a_{i,i}$. Recall the diagonal product of $A$ $$\text{dpro}(A) = \prod\limits_{i=1}^n {(d_i +1)}$$.  

\begin{defn}
We define the Armstrong polynomial $A_n(q)$ to measure the distribution of diagonal products in $\mathcal{T}(1^n)$. That is,  
$$A_n(q) := \sum\limits_{A\in T_n} q^{\text{dpro}(A)}$$ 
\end{defn}

\begin{ex}
As we see in Figure \ref{teslerposet.pdf}, the diagonals corresponding to the $7$ Tesler matrices of size $3$ are $(1,1,1), (0,1,2), (0,0,3),(1,0,2), (0,0,3), (0,1,2), \text{ and } (0,2,1)$ with diagonals products: $8, 6, 4, 6, 4, 6,\text{ and }6$. As a result, we have  $$A_3(q) = 2q^4 + 4q^6 + q^8$$
\end{ex}

\subsection{Previously known bounds}
It follows that for all $A \in \mathcal{T}(1^n)$, we have \newline $ n+1 \leq \text{dpro}(A) \leq 2^n$ where the tightness of the lower and upper bound are obtained with main-diagonals $(0,0, \ldots, 0, n)$ and $(1,1,\ldots,1)$ respectively. The first approximation considers {\it all} the diagonal products to be $(n+1)$ to get the lower bound and $2^n$ to get the upper bound. Through this method we get that 
\begin{equation}\label{eq:first_approx}
n! \leq T(1^n) \leq 2^{\binom{n}{2}}
\end{equation}

\begin{ex}
For n=1 through n=5, we have the following Armstrong polynomials:
\begin{align*} 
&A_1(q) = 1q^2 \\ 
&A_2(q) = 1q^3 + 1q^4  \\ 
&A_3(q) = 2q^4 + 4q^6 + 1q^8  \\ 
&A_4(q) = 7q^5 + 15q^8 + 6q^9 + 11q^{12} + 1q^{16}  \\
&A_5(q) = 40q^6 + 93q^{10} + 67q^{12} + 75q^{16} + 55q^{18} + 26q^{24} + 1q^{32} 
\end{align*}
\end{ex}

\begin{remark}
As we will discuss in Section \ref{sec:gen_tm}, we can also define the Armstrong polynomial $A_n(\alpha,q)$ for certain classes of generalized Tesler matrices with hook sum vector $\alpha$. 
\end{remark}

\begin{prop} \label{co:prop}
Let $[q^a]A_n(q)$ be the coefficient of the term of degree $a$ in $A_n(q)$, then 

\medskip
1) $[q^{2^n}] A_n(q) = 1$ 

\medskip
2) $[q^{n+1}] A_n(q) = T(1^{n-1})$.

\medskip
3) $[q^{3\cdot2^{n-2}}] A_n(q)= 2^n-n-1$
\end{prop}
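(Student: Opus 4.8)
The plan is to analyze each of the three coefficients via the diagonal-product statistic $\text{dpro}(A)=\prod_{i=1}^n(d_i+1)$, where $(d_1,\dots,d_n)$ ranges over the main-diagonals of matrices in $\mathcal{T}(1^n)$. The key structural input is the recursion $T(\alpha_1,\dots,\alpha_n,\alpha_{n+1}) = \sum_{A\in\mathcal{T}(\alpha_1,\dots,\alpha_n)}\text{dpro}(A)$ together with the elementary bounds $n+1\le\text{dpro}(A)\le 2^n$ and the characterization of when these are tight: $\text{dpro}(A)=2^n$ forces the diagonal $(1,1,\dots,1)$, and $\text{dpro}(A)=n+1$ forces $(0,0,\dots,0,n)$.

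For part (1), I would first observe that the only diagonal achieving $\text{dpro}(A)=2^n$ is $(1^n)$, and then argue that there is a \emph{unique} Tesler matrix with this diagonal, namely the identity matrix $I_n$ (which is the minimal element $\hat0$ of $P(1^n)$): any off-diagonal entry $a_{ik}>0$ with $i<k$ would, via the hook-sum constraints read from right to left, force some $a_{kk}\ge 2$, contradicting $d_k=1$. Hence $[q^{2^n}]A_n(q)=1$. For part (2), the degree-$(n+1)$ term corresponds exactly to matrices with $\text{dpro}(A)=n+1$, i.e. diagonal $(0,\dots,0,n)$; I would set up a bijection between such size-$n$ Tesler matrices and all size-$(n-1)$ Tesler matrices. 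The natural map deletes the last row and column: a size-$n$ Tesler matrix $A$ with $d_n=n$ and $d_i=0$ for $i<n$ has its entire ``mass'' in the last column, so the top-left $(n-1)\times(n-1)$ submatrix is a valid Tesler matrix of size $n-1$ (the $n$-th column entries $a_{i,n}$ are then forced by the requirement that hook sum $h_i=1$ and $d_i=0$, namely $a_{i,n} = 1 - \sum_{j: i<j<n} a_{i,j} + \sum_{j<i} a_{j,i}$ minus the contributions already in row $i$ — this is uniquely determined). Verifying this map is a well-defined bijection is the crux of part (2); this also matches the recursion since $[q^{n+1}]A_n(q) = |\{A\in\mathcal{T}(1^n): d_i=0, i<n\}|$, and such $A$ contribute $\text{dpro}=n+1$ each to $T(1^{n+1})$ but their count is what we want.

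For part (3), the value $3\cdot 2^{n-2}$ is $\text{dpro}$ of any diagonal with exactly one entry equal to $2$, one entry equal to $0$, and the remaining $n-2$ entries equal to $1$ (since $(2+1)\cdot 1\cdots 1\cdot(0+1)=3$ times $2^{n-2}$), \emph{and} this is the second-largest achievable value of $\text{dpro}$: I would show that any diagonal $(d_1,\dots,d_n)$ with $\prod(d_i+1)$ strictly between $3\cdot 2^{n-2}$ and $2^n$ is impossible, because lowering the product below $2^n$ forces either some $d_i=0$ (costing a factor $\ge 2$, i.e. bringing us to $\le 2^{n-1} < 3\cdot 2^{n-2}$) unless compensated by exactly one $d_i=2$ with all others $1$. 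So the degree-$3\cdot 2^{n-2}$ coefficient counts Tesler matrices whose diagonal is a permutation of $(2,1,\dots,1,0)$; then I would count these. The cleanest route is to note that such matrices are precisely the rank-$1$ elements of the Tesler poset $P(1^n)$ that lie \emph{above} $\hat 0$ via a ``diagonal-type'' cover (the second case of the cover relation: $a_{ij}=1,a_{jj}=d_j+1,a_{ii}=d_i-1$ starting from the identity) — but one must check no other rank-$1$ element or any higher-rank element has this diagonal. Actually the count $2^n-n-1$ strongly suggests: the matrices with diagonal a permutation of $(2,1^{n-2},0)$ are indexed by pairs giving the position of the $2$ and the $0$, subject to a feasibility constraint; $2^n-n-1 = \binom{n}{2} + \binom{n}{3} + \cdots$ counts nonempty subsets of $[n]$ of size $\ge 2$, so I expect the bijection to send such a matrix to the ``support'' of its unique nonzero column, a subset of size $\ge 2$, realized by building the matrix where column $k$ (for a chosen index set) carries all the off-diagonal mass. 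Establishing this bijection — in particular that every size-$\ge 2$ subset of column-indices gives exactly one such matrix and conversely — is the main obstacle, since unlike parts (1) and (2) it requires genuinely understanding the structure of Tesler matrices one rank above the minimum, not just an extremal uniqueness argument.
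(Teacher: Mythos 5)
Your part (1) is fine and matches the paper's argument. Part (2), however, contains a genuine error in what you yourself call the crux: deleting the last row and column of a matrix $A\in\mathcal{T}(1^n)$ with diagonal $(0,\dots,0,n)$ does \emph{not} produce a Tesler matrix of size $n-1$. For $n=2$ the only such matrix is $\left(\begin{smallmatrix}0&1\\&2\end{smallmatrix}\right)$, whose truncation $(0)$ has hook sum $0$; in general the truncated matrix has zero diagonal and hook sums $1-a_{i,n}$, which need not equal $1$ and can even be negative (e.g.\ the truncation of $\left(\begin{smallmatrix}0&1&0\\&0&2\\&&3\end{smallmatrix}\right)$ has hook sums $(1,-1)$). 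In fact your two claims are mutually inconsistent: if the truncation really were a Tesler matrix, then the ``forced'' entries $a_{i,n}=1-h_i^{B}$ would all vanish, giving $h_n=n\neq 1$. The repair is to move the deleted column back onto the diagonal: a matrix $B\in\mathcal{T}(1^{n-1})$ corresponds to the size-$n$ matrix with the same off-diagonal entries, diagonal zeroed out, $a_{i,n}=b_{ii}$ and $a_{nn}=n$. This is exactly one step of the Tesler generating algorithm (subtract everything from the diagonal), which is how the paper proves (2).

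For part (3) you correctly reduce to counting the matrices whose diagonal is a rearrangement of $(2,1^{n-2},0)$, but the count itself is left as a guess, and the proposed statistic is not even well defined: the size-$3$ matrix with $a_{12}=a_{23}=1$ and diagonal $(0,1,2)$ has nonzero off-diagonal entries in two different columns, so there is no ``unique nonzero column.'' Your instinct that the answer counts subsets of $[n]$ of size at least $2$ can be made rigorous — in the integral-flow picture the off-diagonal part is a unit flow from the position of the $0$ to the position of the $2$ in an acyclic graph, hence the indicator of a single increasing path, and such paths are exactly the subsets of size $\geq 2$ — but you have not carried this out, and you flag it yourself as the main obstacle. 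The paper takes a different and shorter route: using the generating algorithm it shows the coefficient satisfies $a_n=2a_{n-1}+(n-1)$, the same recurrence (with the same initial value) as $2^n-n-1$. As written, parts (2) and (3) of your proposal have genuine gaps.
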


\medskip
\begin{proof}
The first statement follows as there is only one diagonal, namely ${(1,1, \ldots , 1)}$, which results in a diagonal product of $2^n$ and the identity matrix is the only such matrix with this diagonal. 
For the second statement, the only possible main-diagonal with diagonal product $(n+1)$ is $(0,\ldots,0,n)$. Using the Tesler generating algorithm discussed in Section \ref{tesler_generating_process}, the only way to get such a diagonal is to start out with any main-diagonal of size $(n-1)$ and then taking everything away from all elements of the original diagonal. As a result, for each Tesler matrix of size $(n-1)$, we have a unique Tesler matrix of size $n$ with diagonal  $(0,\ldots,0,n)$, thus proving the second statement.

Finally, considering the last part of our proposition, let the coefficient of the term with degree $3\cdot2^{n-2}$ in $A_n(q)$ be $a_n$. We simply need to show that $a_n$ satisfies the same recurrence relation as the sequence $\{2^n-n-1\}$. Namely, we need to show that $a_n= (n-1) + 2a_{n-1}$. One can check that the terms with degree $3\cdot2^{n-2}$ in $A_n(q)$ come from the diagonal $(2,1,\ldots1,0)$ and valid rearrangements of those terms. Starting with diagonals in the form $(2,1,\ldots1,0)$ of the previous size, we can either do nothing, or subtract $2$ from the $2$ term in the diagonal $(2,1,\ldots1,0)$. This accounts for the $2a_{n-1}$. We get the $(n-1)$ from noting that we can also generate the diagonal $(2,1,\ldots1,0)$ by starting from the unique Tesler matrix with main-diagonal $(1,\ldots,1)$ and subtracting any one of the $(n-1)$ main-diagonal entries that are $1$.
\end{proof}

Note that given $k \in \mathbb{N}$ and the Armstrong polynomial, $A_k(q)$, it is possible read off $T(1^{k-1})$, $T(1^k)$, and $T(1^{k+1})$ from this polynomial as we show in the following proposition. 

\begin{prop}\label{prop:coeff_darmpoly}
The Armstrong polynomial has the following characteristics: 

\medskip
1) $T(1^{n+1}) = \frac{d}{dt}  { A_n(q)  |_{q=1} }  $

\medskip
2) $A_n(1) = T(1^n) $
\end{prop}

\begin{proof}
First, we note that \text{dpro}(A) $\geq$ 2 for all $n$. Then,
 
\begin{align*}
\frac{d} {dt}  { A_n(q)  \bigr|_{q=1} }  &=  \frac{d} {dt} { \sum\limits_{A\in \mathcal{T}(1^n) } q^{\text{dpro}(A)} \bigr|_{q=1} } = \sum\limits_{A\in \mathcal{T}(1^n)} {  \frac{d} {dt} {q^{\text{dpro}(A)} \bigr|_{q=1} } }  \\
 &= \sum\limits_{A\in \mathcal{T}(1^n)} { \text{dpro}(A) \cdot q^ { \text{dpro}(A) - 1 }  \bigr|_{q=1} } \\ 
 &= \sum\limits_{A\in \mathcal{T}(1^n)} { \text{dpro}(A) } = T(1^{n+1}).  
\end{align*}
The second statement is immediate.
\end{proof}

We can now use the observations in Proposition \ref{co:prop} regarding the Armstrong polynomial to get the following bounds on the number of Tesler matrices.
\begin{thm}
\begin{equation}\label{eq:newbounds}
\prod\limits_{i=1}^{n-1} (2i-1) \leq T(1^n) \leq 2^{\binom{n-2}{2}-1}\cdot 3^n
\end{equation}
\end{thm}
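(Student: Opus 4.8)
The plan is to derive both inequalities in \eqref{eq:newbounds} from the structural facts about the Armstrong polynomial established in Propositions \ref{co:prop} and \ref{prop:coeff_darmpoly}, together with the recursive relation $T(1^{n+1}) = \frac{d}{dt}A_n(q)|_{q=1} = \sum_{A \in \mathcal{T}(1^n)} \text{dpro}(A)$. The point is that this last identity lets us induct: a good lower (resp. upper) bound on $\text{dpro}(A)$ that \emph{improves} on the crude $n+1 \le \text{dpro}(A) \le 2^n$ by accounting for the matrices at the extremes yields a correspondingly improved bound on $T(1^{n+1})$ in terms of $T(1^n)$.

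For the lower bound $\prod_{i=1}^{n-1}(2i-1) = (2n-3)!!$, I would argue as follows. We know $T(1^{n+1}) = \sum_{A \in \mathcal{T}(1^n)} \text{dpro}(A)$, and every $\text{dpro}(A) \ge n+1$. But by part (2) of Proposition \ref{co:prop}, exactly $T(1^{n-1})$ of the size-$n$ matrices achieve the minimum $\text{dpro} = n+1$, while the remaining $T(1^n) - T(1^{n-1})$ matrices have $\text{dpro}(A) \ge n+2$ (the next smallest possible value of a product $\prod(d_i+1)$ with $\sum d_i = n$). Hence
\begin{equation}\label{eq:lb_recursion}
T(1^{n+1}) \ge (n+1)T(1^{n-1}) + (n+2)\bigl(T(1^n) - T(1^{n-1})\bigr) = (n+2)T(1^n) - T(1^{n-1}).
\end{equation}
Actually a cleaner route, and the one I would pursue, is simply $T(1^{n+1}) = \sum_A \text{dpro}(A) \ge (2n-1)\, T(1^n)$ once one checks directly that the \emph{average} diagonal product over $\mathcal{T}(1^n)$ is at least $2n-1$; more robustly, one shows $T(1^{n+1})/T(1^n) \ge 2n-1$ by a matching/injection argument on diagonals, and then $T(1^n) \ge \prod_{i=1}^{n-1}(2i-1)$ follows by induction with base case $T(1^1) = 1$. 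I expect the factor $2n-1$ to come from pairing each size-$n$ Tesler matrix with $2n-1$ size-$(n+1)$ descendants via the Tesler generating algorithm, or from a careful lower bound on $\sum_A \text{dpro}(A)$ exploiting that not all $d_i$ can be zero simultaneously.

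For the upper bound $T(1^n) \le 2^{\binom{n-2}{2}-1}\cdot 3^n$, I would symmetrically refine the crude estimate $\text{dpro}(A) \le 2^n$. By part (1) of Proposition \ref{co:prop} only one matrix (the identity) attains $2^n$, and by part (3) exactly $2^n - n - 1$ attain the next value $3 \cdot 2^{n-2}$; one checks that all remaining matrices have $\text{dpro}(A) \le 3 \cdot 2^{n-2}$ as well (this is the largest possible value of $\prod(d_i+1)$ when the diagonal is not $(1,\dots,1)$, realized by $(2,1,\dots,1,0)$ and its rearrangements). Thus every size-$n$ diagonal product is at most $3\cdot 2^{n-2}$ except the single identity contribution, giving a recursion of the shape $T(1^{n+1}) \le 3 \cdot 2^{n-2}\bigl(T(1^n) - 1\bigr) + 2^n$, which iterates to the stated closed form after collecting the powers of $2$ and $3$; the exponent $\binom{n-2}{2} - 1$ is exactly what accumulates from multiplying the factors $3 \cdot 2^{n-2}$ down the recursion.

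The main obstacle, for both bounds, is justifying that the \emph{second}-largest (resp. second-smallest) possible diagonal product really is $3\cdot 2^{n-2}$ (resp. $n+2$) and that the extremal counts from Proposition \ref{co:prop} are sharp enough to feed into a clean recursion — i.e. controlling the contribution of all the ``middle'' matrices, not just the extremes. One must verify the elementary optimization claim that among integer tuples $(d_1,\dots,d_n)$ with $d_i \ge 0$ and $\sum d_i = n$, the product $\prod(d_i+1)$ is maximized only at $(1,\dots,1)$ with value $2^n$ and takes second-largest value $3\cdot 2^{n-2}$ at $(2,1,\dots,1,0)$, and the analogous statement on the low end; then one must make sure the recursion is run over the right range and the base case matches, so that the powers of $2$ and $3$ land exactly at $\binom{n-2}{2}-1$ and $n$ respectively. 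Once these combinatorial optimizations and the bookkeeping of the iterated recursion are in hand, both inequalities in \eqref{eq:newbounds} follow by straightforward induction on $n$.
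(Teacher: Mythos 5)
Your overall strategy --- rewrite $T(1^{n+1}) = \sum_{A\in\mathcal{T}(1^n)}\text{dpro}(A)$ and sharpen the crude bounds $n+1 \le \text{dpro}(A) \le 2^n$ by isolating the extremal matrices --- is the same as the paper's, and your treatment of the upper bound (every non-identity matrix has $\text{dpro}(A) \le 3\cdot 2^{n-2}$, then iterate) is in the spirit of what the paper dismisses as ``the same method and further reductions.'' But the lower bound, which is the substantive half, has a genuine gap. Your claim that the second-smallest value of $\prod(d_i+1)$ subject to $d_i \ge 0$, $\sum_i d_i = n$ is $n+2$ is false: any diagonal other than a rearrangement of $(0,\dots,0,n)$ has at least two nonzero entries, and the minimum of $(a+1)(n-a+1)$ over $1\le a\le n-1$ is $2n$, so the second-smallest diagonal product is $2n$ (and $n+2$ is in general not even attainable). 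Moreover, the recursion you build from $n+2$, namely $T(1^{n+1}) \ge (n+2)T(1^n) - T(1^{n-1})$, yields a per-step factor of only about $n+2$, which is too weak to produce $(2n-3)!!$; so the fallback you name --- ``the average diagonal product is at least $2n-1$'' --- is exactly the statement that still has to be proved, and you do not prove it. Your proposed mechanism cannot supply it either: under the Tesler generating algorithm a matrix $A$ has exactly $\text{dpro}(A)$ descendants, and the $T(1^{n-1})$ matrices with minimal diagonal have only $n+1 < 2n-1$ of them, so no injection pairs every size-$n$ matrix with $2n-1$ descendants.

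The missing ingredient, which is the heart of the paper's proof, is a bound on the \emph{proportion} of minimal-dpro matrices: by Proposition \ref{co:prop}(2) exactly $T(1^{n-1})$ matrices of size $n$ have $\text{dpro}=n+1$, and by the previously known bound \eqref{eq:first_approx} one has $T(1^n) \ge n\,T(1^{n-1})$, i.e.\ at most a $1/n$ fraction of $\mathcal{T}(1^n)$ is minimal. Combining this with the correct second-smallest value $2n$ gives
\[ \frac{T(1^{n+1})}{T(1^n)} \ \ge\ \frac{1}{n}(n+1) + \frac{n-1}{n}(2n) \ =\ 2n-1+\frac{1}{n} \ \ge\ 2n-1, \]
and iterating this yields $T(1^n)\ge \prod_{i=1}^{n-1}(2i-1)$. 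So your plan is repairable, but as written the factor $2n-1$ --- the whole point of the lower bound --- is asserted rather than derived, and both concrete routes you propose toward it (the value $n+2$, and the $(2n-1)$-descendant matching) fail.
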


\begin{proof}
We use a similar method as we did in our first approximation. This time, however, we know that we have exactly $T(1^{n-1})$ of our terms to have a diagonal product of $(n+1)$ by Proposition \ref{prop:coeff_darmpoly}. We now assume that the remaining Tesler matrices have a diagonal product of $2n$, the second lowest diagonal product. Using this, we note that

\begin{align*} 
T(1^{n+1}) &= \sum\limits_{A \in \mathcal{T}(1^n)} \text{dpro}(A) \\
& \geq T(1^{n-1}) (n+1) + \left[T(1^n) - T(1^{n-1})\right] (2n) \\ 
\end{align*}

\noindent We now use the previously known bounds in \eqref{eq:first_approx} that $T(1^n) \geq n T(1^{n-1}) $ to get that 

\begin{align*}
T(1^{n+1}) &\geq T(1^n) \left[ \frac{T(1^{n-1})}{T(1^n)} (n+1) + \frac{T(1^n)-T(1^{n-1})}{T(1^n)} (2n) \right] \\ 
& \geq T(1^n) \left[ \frac{1}{n} (n+1) + \frac{n-1}{n} (2n) \right] \geq T(1^n) (2n-1) \\ 
\end{align*}

\noindent Iterating this, we get our desired lower bound that $$T(1^{n+1}) \geq \prod\limits_{i=1}^n(2i-1) = (2n-1)!! $$ 

\noindent We get the upper bound by the same method and further reductions. 
\end{proof}

\begin{remark}
We note that the lower bound in \eqref{eq:newbounds} is better than $n!$ since  $\prod\limits_{i=1}^{n-1} (2i-1) \geq 2^{n-2}\cdot(n-2)!$ and is $O( (\frac{2n}{e})^{n-1})$. Note that this still does not give an affirmative answer to Question \ref{question:pak} and that the upper bound in \eqref{eq:newbounds} is still $e^{\Theta(n^2)}$, but is slightly tighter.  
\end{remark}

\section{ Understanding Different Hook Sum Vectors } \label{sec:gen_tm}
Recall that $\mathcal{T}(1^k,0^{n-k})$ denotes the set of generalized Tesler matrices with hook sum vector equal to $(1,\ldots,1,0,\ldots,0)$ where there are $k$ $1$'s and $(n-k)$ $0$'s and that  $T(1^k,0^{n-k})$ denotes the number of such matrices. In this section, we will refer to Armstrong polynomials for generalized Tesler matrices with hook sum vector $\alpha$ as $A_n(\alpha,q)$ where the Armstrong polynomial from the previous section is such that $A_n(q):=A_n(1,1, \ldots,1,q)$.

First, we consider $\mathcal{T}(1,0^{n-1})$. It follows that  $\mathcal{T}(1,0^{n-1})$ can be generated by the method discussed in Section \ref{tesler_generating_process}. Now, we note that there is only one possible diagonal up to reordering of $(1,0, \ldots, 0)$, so all elements have the same diagonal product and as a result the Armstrong polynomial is always in the form $A_n(1,0, \ldots, 0,q)=(T(1,0^{n-2})) q^2 $ which yields that $T(1,0^{n-1}) = 2^{n-1}$ by the first part in Proposition \ref{prop:coeff_darmpoly}. Hence, letting $T_1(x)$ be the generating function for the number of generalized Tesler matrices with hook sum vector $(1,0^{n-1})$ we get that $$ T_1(x) = \frac{1}{1-2x} $$

Now, let us consider $\mathcal{T}(1^2,0^{n-2})$. These matrices have been recently studied in \protect\cite{CSKM, HIO}. For the same reason as above, we can consider the corresponding Armstrong polynomial. There are only two possible diagonals up to reordering of $(2,0, \ldots, 0)$ and $(1,1,0, \ldots, 0)$ with diagonal products $3$ and $4$ respectively. We now consider the corresponding Armstrong polynomial $A_n(1^2,0^{n-2},q)$
 
\begin{prop}
Let $A_{n-1}(1^2,0^{n-3},q) = a_{n-1}q^3 + b_{n-1}q^4$. Then, we have $$A_n(1^2,0^{n-2},q) = ( 2a_{n-1} + b_{n-1})q^3 + ( a_{n-1} + 3b_{n-1})q^4$$ 
\end{prop}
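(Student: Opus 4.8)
The plan is to use the Tesler generating algorithm from Section \ref{tesler_generating_process} to track how each generalized Tesler matrix of size $n-1$ with hook sum vector $(1^2,0^{n-3})$ produces matrices of size $n$ with hook sum vector $(1^2,0^{n-2})$, and to record the effect on the diagonal product. Recall that all matrices in $\mathcal{T}(1^2,0^{n-2})$ have a main-diagonal that is, up to reordering, either $(2,0,\ldots,0)$ with $\text{dpro}=3$, or $(1,1,0,\ldots,0)$ with $\text{dpro}=4$. So we write $A_{n-1}(1^2,0^{n-3},q)=a_{n-1}q^3+b_{n-1}q^4$, where $a_{n-1}$ counts the ``type-$3$'' matrices and $b_{n-1}$ the ``type-$4$'' matrices, and the goal is to express the analogous counts $a_n,b_n$ for size $n$ in terms of $a_{n-1},b_{n-1}$.

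First I would fix a matrix $A$ of size $n-1$ and apply the algorithm: one selects, for each main-diagonal entry $d_i$, a replacement $0\le d_i'\le d_i$, moves the difference $d_i-d_i'$ into the new bottom row, and then the new corner entry $d_n'$ is forced so the diagonal sums to $\sum\alpha_k = 2$. I would split into the two cases for the shape of $A$'s diagonal. If $A$ is type-$3$ (diagonal has a single $2$), the choices for $d_i'$ on the $2$-entry are $0,1,2$: choosing $2$ leaves the new diagonal of shape $(2,0,\ldots,0)$ (type-$3$); choosing $1$ forces the new corner to be $1$, giving shape $(1,1,0,\ldots,0)$ (type-$4$); choosing $0$ forces the new corner to be $2$, giving type-$3$ again. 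So each type-$3$ matrix of size $n-1$ yields two type-$3$ and one type-$4$ matrix of size $n$, contributing $2a_{n-1}$ to $a_n$ and $a_{n-1}$ to $b_n$. If $A$ is type-$4$ (diagonal has two $1$'s), the relevant choices are on the two $1$-entries; enumerating the $2\times 2$ possibilities for $(d_i',d_j')\in\{0,1\}^2$ and the forced corner, one checks that exactly one choice keeps shape $(1,1,0,\ldots,0)$ and three choices produce shape $(2,0,\ldots,0)$ (the corner absorbs the removed mass), so each type-$4$ matrix yields three type-$3$ and one type-$4$, contributing $3b_{n-1}$ to $a_n$ and $b_{n-1}$ to $b_n$. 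Summing gives $a_n = 2a_{n-1}+3b_{n-1}$ and $b_n = a_{n-1}+b_{n-1}$, hence $A_n(1^2,0^{n-2},q) = (2a_{n-1}+b_{n-1})q^3+(a_{n-1}+3b_{n-1})q^4$.

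Wait — I need to double-check the bookkeeping, since the proposition's stated coefficients are $(2a_{n-1}+b_{n-1})$ and $(a_{n-1}+3b_{n-1})$ rather than $(2a_{n-1}+3b_{n-1})$ and $(a_{n-1}+b_{n-1})$. So the correct transition must be: each type-$3$ matrix produces two type-$3$ and one type-$4$; each type-$4$ matrix produces one type-$3$ and three type-$4$. This means the careful case analysis has to land the other way: when $A$ is type-$4$, exactly one of the four $\{0,1\}^2$ diagonal-replacement choices yields shape $(2,0,\ldots,0)$ and three yield $(1,1,0,\ldots,0)$. Concretely, with the two $1$'s at positions $i,j$: the choice $(d_i',d_j')=(0,0)$ sends both units to the new row and forces the corner to be $2$, giving shape $(2,0^{n-1})$; the choices $(1,0)$, $(0,1)$, $(1,1)$ each leave at least one $1$ on the diagonal and force the corner to make the sum $2$, and one checks these all give shape $(1,1,0^{n-2})$. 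That yields the contribution $b_{n-1}$ to $a_n$ and $3b_{n-1}$ to $b_n$, matching the proposition. The main obstacle, then, is precisely this case enumeration — being careful about which replacement choices produce which diagonal shape (and in particular verifying that no choice is double-counted and that the forced value of the new corner entry $d_n'$ is computed correctly in each subcase); the rest is immediate bookkeeping, and combining with $A_n(q)|_{q=1}$ recovers the recurrence $T(1^2,0^{n-2}) = a_n+b_n = 3a_{n-1}+4b_{n-1}$ if one wants a sanity check via Proposition \ref{prop:coeff_darmpoly}.
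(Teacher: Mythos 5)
Your final argument is correct and is essentially the paper's proof: both apply the Tesler generating algorithm and classify the resulting diagonal by its shape, the only difference being that the paper counts just the transitions into shape $(2,0,\ldots,0)$ (giving $2a_{n-1}+b_{n-1}$) and obtains the $q^4$ coefficient for free from the total $3a_{n-1}+4b_{n-1}$, whereas you track both transitions explicitly. Your corrected type-$4$ enumeration (one choice landing in shape $(2,0,\ldots,0)$, three in $(1,1,0,\ldots,0)$) is the right one, and since you verify it concretely the earlier swapped count is only a false start, not a gap.
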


\begin{proof}
We only need to that prove the value of the coefficient of $q^3$ is as stated as the other coefficient is determined by the fact we know the total number of matrices that are in this set from the $A_{n-1}(1^2,0^{n-2},q)$ term. Thus, we consider the ways to get the diagonal $(2,0, \ldots, 0)$ from the previous set. First, we can do nothing in the diagonal part of the Tesler generating process and add a zero to each of the $(2,0, \ldots, 0)$ of the previous case. Second, for all of the previous size matrices, we subtract everything from the diagonal and then add $2$ yielding the diagonal $(0, \ldots, 0,2)$. As a result, we generate  $2a_{n-1} + b_{n-1}$ distinct terms with diagonal $(2,0, \ldots, 0)$.
\end{proof}

\begin{prop}
Let $t_n:=T(1^2,0^{n-2})$. Then $t_n \geq 3^{n-1}$ for $n\geq5$.
\end{prop}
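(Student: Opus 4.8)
The plan is to extract a recurrence for $t_n = T(1^2, 0^{n-2})$ from the previous proposition and then show that the resulting sequence eventually dominates $3^{n-1}$. Write $A_{n-1}(1^2,0^{n-3},q) = a_{n-1}q^3 + b_{n-1}q^4$, so that $t_{n-1} = a_{n-1} + b_{n-1}$ and, by Proposition \ref{prop:coeff_darmpoly} applied to this class of generalized Tesler matrices, $t_n = \tfrac{d}{dq}A_{n-1}(1^2,0^{n-3},q)\big|_{q=1} = 3a_{n-1} + 4b_{n-1}$. Combining this with the previous proposition's formula $A_n(1^2,0^{n-2},q) = (2a_{n-1}+b_{n-1})q^3 + (a_{n-1}+3b_{n-1})q^4$ gives $a_n = 2a_{n-1}+b_{n-1}$ and $b_n = a_{n-1}+3b_{n-1}$. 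So $t_n = a_n + b_n = 3a_{n-1}+4b_{n-1}$, consistent with the derivative computation, and one more substitution yields a linear recurrence purely in the $t_n$: from $a_n = 2a_{n-1}+b_{n-1}$ and $b_n = a_{n-1}+3b_{n-1}$ we get $a_n + b_n = 3a_{n-1} + 4b_{n-1}$ and $t_{n} = 5t_{n-1} - 5t_{n-2}$ (the pair $(a_n,b_n)$ evolves by a $2\times 2$ matrix with trace $5$ and determinant $5$, so $t_n$ satisfies its characteristic recurrence).

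Next I would pin down small values to seed the recurrence. One checks directly (or from the data implicit in the earlier sections) that $\mathcal{T}(1^2,0^{n-2})$ gives, say, $t_2, t_3, t_4, t_5$ explicitly; in particular I would verify $t_5 \ge 3^4 = 81$ and $t_6 \ge 3^5 = 243$ by hand from the recurrence and the base cases. These two anchored inequalities are the base case of the induction.

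The inductive step is then immediate: assuming $t_{n-1} \ge 3^{n-2}$ and $t_{n-2} \ge 3^{n-3}$, I want $t_n = 5t_{n-1} - 5t_{n-2} \ge 3^{n-1}$. The subtlety is the negative coefficient — a naive lower bound $5t_{n-1} - 5t_{n-2} \ge 5\cdot 3^{n-2} - 5t_{n-2}$ is useless without an \emph{upper} bound on $t_{n-2}$. So the real step is to strengthen the induction hypothesis to a two-sided bound, e.g. $3^{n-1} \le t_n \le c\cdot\gamma^n$ for the dominant root $\gamma = \tfrac{5+\sqrt 5}{2}$ of $x^2 - 5x + 5$, or more simply to track the ratio $t_n/t_{n-1}$ and show it stays in an interval bounded away from both roots in a way that forces growth faster than $3$. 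Concretely: show by induction that $t_n \ge 3\,t_{n-1}$ for $n$ large (equivalently $t_n - 3t_{n-1} \ge 0$), since $t_n - 3t_{n-1} = 5t_{n-1} - 5t_{n-2} - 3t_{n-1} = 2t_{n-1} - 5t_{n-2}$, and this is $\ge 0$ once $t_{n-1} \ge \tfrac52 t_{n-2}$ — a condition that propagates because $\tfrac52 < \gamma$. Once $t_n \ge 3t_{n-1}$ holds from some index on, together with a verified base value $\ge 3^{k-1}$ at that index, the bound $t_n \ge 3^{n-1}$ follows by a one-line induction.

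The main obstacle is precisely this sign issue in the recurrence: the bound cannot be proved by a one-sided induction on $t_n \ge 3^{n-1}$ alone, so the key decision is to carry the auxiliary invariant (either the two-sided exponential bracket or the ratio bound $t_{n-1}/t_{n-2} \ge \tfrac52$) through the induction. Everything else — deriving the $2\times 2$ system, reducing it to $t_n = 5t_{n-1} - 5t_{n-2}$, and checking the handful of initial values — is routine.
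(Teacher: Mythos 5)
Your argument is correct in substance, but it takes a genuinely different and heavier route than the paper. The paper's proof is a two-line combinatorial observation: every matrix in $\mathcal{T}(1^2,0^{n-2})$ has diagonal product at least $3$ (the diagonals are rearrangements of $(2,0,\ldots,0)$ or $(1,1,0,\ldots,0)$, with products $3$ and $4$), so by the Tesler generating algorithm of Section \ref{tesler_generating_process} one gets $t_{n+1}=\sum_{A}\mathrm{dpro}(A)\geq 3t_n$ directly; anchoring with the computed value $t_5=90\geq 3^4$ then gives $t_n\geq 90\cdot 3^{n-5}\geq 3^{n-1}$ for $n\geq 5$. You instead pass through the linear recurrence $t_n=5t_{n-1}-5t_{n-2}$ (derived correctly from the coefficient recurrence $a_n=2a_{n-1}+b_{n-1}$, $b_n=a_{n-1}+3b_{n-1}$ via the trace-$5$, determinant-$5$ transfer matrix, consistently with Proposition \ref{prop:coeff_darmpoly}), and then handle the negative coefficient by carrying the ratio invariant $t_{n-1}/t_{n-2}\geq \tfrac52$, which indeed propagates since $r_{n}=5-5/r_{n-1}\geq 3$ once $r_{n-1}\geq\tfrac52$. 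This works, but the ``sign issue'' you identify as the main obstacle is an artifact of the recurrence route: the inequality $t_n\geq 3t_{n-1}$ that your ratio argument labors to establish is exactly what the minimum-diagonal-product observation gives for free. Your approach does buy something --- it connects the bound to the recurrence behind the generating function of Proposition \ref{prop:genfun2} and shows the true growth rate is the root $\gamma=\tfrac{5+\sqrt5}{2}$ --- but it requires explicitly verified seeds, which you defer. One caveat there: the initial conditions $t_1=1$, $t_2=2$ quoted near Proposition \ref{prop:genfun2} do not satisfy the recurrence at the first step (one computes $t_3=7$ and $t_4=25$ directly, while $5t_2-5t_1=5$), so you must anchor your induction at honestly computed values such as $t_4=25$, $t_5=90$ (giving $t_5/t_4=\tfrac{18}{5}\geq\tfrac52$ and $t_5\geq 3^4$), not at data inferred from those stated seeds.
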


\begin{proof} 
Generating these matrices through a computer program, we note that $t_5 = 90$. Thus, since $3$ is the smallest possible diagonal product we have  $t_n \geq 3t_{n-1} \cdots \geq 3^{n-5} t_5 = 90(3^{n-5} )\geq 3^{n-1} $
\end{proof}

\begin{prop}[See also \protect\protect\cite{HIO}]\label{prop:genfun2} 
 The ordinary generating function for $t_n$ is $$T_2(x) = \frac{ 1-4x-2x^2} {1-5x+5x^2}$$
\end{prop}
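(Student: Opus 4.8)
The plan is to derive a recursion for $t_n = T(1^2, 0^{n-2})$ from the previous proposition and then package it as a rational generating function. First I would use the relation $t_n = A_n(1^2,0^{n-2},1) = a_n + b_n$ together with the identity from the preceding proposition, which gives $a_n = 2a_{n-1} + b_{n-1}$ and $b_n = a_{n-1} + 3b_{n-1}$. Adding these yields $t_n = 3a_{n-1} + 4b_{n-1}$; to turn this into a closed recursion purely in the $t_n$, I would eliminate $a_n$ and $b_n$ by noting that the pair $(a_n, b_n)$ evolves by a fixed $2\times 2$ matrix $\begin{pmatrix} 2 & 1 \\ 1 & 3 \end{pmatrix}$, whose characteristic polynomial is $\lambda^2 - 5\lambda + 5$. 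By the Cayley--Hamilton theorem, both $a_n$ and $b_n$ — and hence any linear combination of them such as $t_n$ — satisfy the linear recurrence $t_n = 5t_{n-1} - 5t_{n-2}$ for $n$ large enough (say $n \geq 5$, once we are safely inside the regime where the proposition and the base cases apply).

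Next I would pin down the initial conditions. One computes directly (or reads off from small cases, e.g.\ $t_2, t_3, t_4$, using the earlier data and the relation $t_n = a_n + b_n$) the first two values needed, and then defines $T_2(x) = \sum_{n \geq ?} t_n x^n$ (with the indexing chosen to match the stated answer). Multiplying the recurrence $t_n - 5t_{n-1} + 5t_{n-2} = 0$ by $x^n$ and summing gives $(1 - 5x + 5x^2) T_2(x) = (\text{polynomial determined by the first two terms})$, and solving for $T_2(x)$ produces a rational function with denominator $1 - 5x + 5x^2$. Matching the numerator to $1 - 4x - 2x^2$ is then just a matter of checking that the chosen initial conditions are consistent; this is the routine bookkeeping step.

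The main obstacle is not the algebra of the recursion — that is essentially forced by Cayley--Hamilton once the $2\times 2$ update matrix is identified — but rather getting the indexing conventions and base cases exactly right so that the numerator comes out as the clean expression $1 - 4x - 2x^2$ rather than something shifted. In particular one has to be careful about where the generating function starts and whether the recurrence $t_n = 5t_{n-1} - 5t_{n-2}$ holds for all $n$ in the summation range or only eventually (small $n$ may be exceptional because the diagonal $(1,1,0,\ldots,0)$ is not available, or because the generating algorithm behaves differently for tiny matrices). I would resolve this by explicitly tabulating $t_n$ for the first several values, verifying the recurrence directly against that table, and then reading off the correct numerator. Since \protect\cite{HIO} already contains this generating function, I would also cross-check the final formula against their statement as a sanity check.
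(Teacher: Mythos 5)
This is essentially the paper's own proof: the paper likewise just records the recurrence $t_{n+1} = 5t_n - 5t_{n-1}$ (your Cayley--Hamilton argument on the update matrix $\left(\begin{smallmatrix} 2 & 1 \\ 1 & 3 \end{smallmatrix}\right)$ coming from the preceding proposition makes explicit what the paper merely asserts) and then invokes standard methods with the initial conditions $t_1 = 1$, $t_2 = 2$. Your caution about base cases and indexing is the one substantive point and is well placed: the recurrence only begins to hold once the two-diagonal recursion applies (indeed $t_3 = 7 \neq 5t_2 - 5t_1$ with the paper's stated initial data), so the tabulation you propose ($t_1 = 1$, $t_2 = 2$, $t_3 = 7$, $t_4 = 25$, $t_5 = 90$) is what actually pins down the numerator --- and carrying it out shows that with the natural convention $T_2(x) = \sum_{n \geq 1} t_n x^{n-1}$ the numerator comes out as $1 - 3x + 2x^2$, whereas $\frac{1 - 4x - 2x^2}{1 - 5x + 5x^2}$ expands with negative coefficients and so cannot be the ordinary generating function of these counts under any shift, a discrepancy your planned cross-check against \cite{HIO} would catch.
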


\begin{proof}
We note that $t_n$ has the following recurrence relation $t_{n+1} = 5t_{n} - 5t_{n-1}$. From this difference equation, and the initial conditions $t_1=1$ and $t_2=2$, we can find the generating function for $t_n$ via standard methods. 
\end{proof} 

\begin{prop}\label{prop:motivating_conj} 
For all k, there exists some $N_k \in \mathbb N$ such that for all $n \geq N_k$ we have $$T(1^k,0^{n-k} ) \geq (k+1)^{n-1} $$
\end{prop}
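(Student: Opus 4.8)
The plan is to obtain a recursion for $T(1^k,0^{n-k})$ via the Tesler generating algorithm and show that, for $n$ large, the "growth rate" eventually exceeds $k+1$. Concretely, let $t_n^{(k)} := T(1^k,0^{n-k})$. The Tesler generating algorithm of Section \ref{tesler_generating_process} gives the exact identity
\begin{equation*}
t_{n+1}^{(k)} = \sum_{A \in \mathcal{T}(1^k,0^{n-k})} \text{dpro}(A) = A_n(1^k,0^{n-k},q)\bigr|_{q=1} \text{ differentiated, i.e. } \frac{d}{dq}A_n(1^k,0^{n-k},q)\Bigr|_{q=1},
\end{equation*}
so everything reduces to controlling $\sum_{A}\text{dpro}(A)$ from below. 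First I would record that $\text{dpro}(A)\geq k+1$ for every $A\in\mathcal{T}(1^k,0^{n-k})$ (the smallest possible diagonal product, attained by the diagonal with a single nonzero entry $k$), which already yields the crude bound $t_{n+1}^{(k)} \geq (k+1)\, t_n^{(k)}$, hence $t_n^{(k)} \geq (k+1)^{\,n-m}\, t_m^{(k)}$ for any fixed base point $m$. This is not quite enough: to beat $(k+1)^{n-1}$ we need the base value $t_m^{(k)}$ to be at least $(k+1)^{m-1}$, which is a finite check once $m$ is chosen well, \emph{or} we need to extract a genuinely better-than-$(k+1)$ factor at infinitely many steps.

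The cleaner route, which I would pursue as the main argument, is to isolate a sub-family of matrices whose diagonal products are strictly larger than $k+1$ and show this sub-family is a positive fraction of the total. Split $\mathcal{T}(1^k,0^{n-k})$ according to the diagonal: matrices with diagonal a permutation of $(k,0,\dots,0)$ (contributing exactly $k+1$ each) versus all others (contributing at least $2k$, since the next-smallest diagonal product comes from a diagonal like $(k-1,1,0,\dots,0)$ giving $k(k-1+1)=... $ wait — more carefully, the second-smallest diagonal product is $\geq 2k$, attained e.g. by $(1,k-1,0,\dots,0)$ with product $2\cdot k$). Writing $t_n^{(k)} = u_n + v_n$ with $u_n$ the count of "rank-one-diagonal" matrices, the generating algorithm gives $t_{n+1}^{(k)} \geq (k+1)u_n + 2k\, v_n = (k+1)t_n^{(k)} + (k-1)v_n$. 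So it suffices to show $v_n$ is eventually at least a fixed constant multiple of $t_n^{(k)}$; then $t_{n+1}^{(k)} \geq (k+1 + c(k-1))\,t_n^{(k)}$ with $k+1+c(k-1) > k+1$, and iterating from a base point $m$ large enough that both $v_n\geq c\,t_n^{(k)}$ holds for $n\geq m$ and $t_m^{(k)} \geq (k+1)^{m-1}$, we conclude. The $k=1$ and $k=2$ cases computed earlier ($T(1,0^{n-1})=2^{n-1}$ exactly, and $t_n = T(1^2,0^{n-2})$ with $t_n\geq 3^{n-1}$ for $n\geq 5$ via the recurrence $t_{n+1}=5t_n-5t_{n-1}$) are consistent with and illustrate this pattern.

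To show $v_n/t_n^{(k)}$ is bounded away from $0$, I would argue that the generating algorithm produces, from any matrix of size $n$, many size-$(n+1)$ matrices whose diagonal is \emph{not} a permutation of $(k,0,\dots,0)$: indeed if the current diagonal has two or more nonzero parts, then "doing nothing" (appending a $0$) already keeps us in the $v$-class, so $v_{n+1} \geq \text{dpro} \cdot$-weighted count over the $v$-class $\geq 2k\, v_n$, whereas $u_{n+1} \leq (k+1) t_n^{(k)}$ trivially; combined with $t_{n+1}^{(k)} \geq (k+1)t_n^{(k)}$ this forces $v_{n+1}/t_{n+1}^{(k)} \geq \frac{2k v_n}{(k+1)t_n^{(k)} + 2k v_n}$, and a fixed point / monotonicity analysis of this recursion shows the ratio converges upward to a positive limit once $v_m > 0$ for some $m$ (which holds for all $m \geq 2$). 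The main obstacle is precisely this last bookkeeping — making the "positive fraction" claim rigorous with explicit constants and nailing down the exact second-smallest diagonal product as a function of $k$ — together with verifying the finite base case $t_m^{(k)} \geq (k+1)^{m-1}$, which in principle requires knowing $N_k$ explicitly or arguing its existence abstractly from the fact that the eventual growth ratio strictly exceeds $k+1$. I expect the growth-ratio part to be routine once set up, and the "is a positive fraction" step to be where the real care is needed; an alternative, softer finish is simply to note that since $\limsup (t_{n+1}^{(k)}/t_n^{(k)}) \geq k+1 + c(k-1) > k+1$ and the ratio is eventually $\geq k+1$ at every step, the product telescopes to beat $(k+1)^{n-1}$ for all sufficiently large $n$, giving existence of $N_k$ without an explicit value.
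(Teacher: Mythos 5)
Your outline is the same as the paper's: use the generating-algorithm identity $T(1^k,0^{n-k+1})=\sum_A \mathrm{dpro}(A)$, note the minimal diagonal product is $k+1$ and the next smallest is $2k$, argue that the matrices with minimal product are a bounded-away-from-one fraction of $\mathcal{T}(1^k,0^{n-k})$, conclude the average diagonal product exceeds $k+1$ by a constant for $k\geq 2$, and let the telescoped ratio eventually beat $(k+1)^{n-1}$ (the paper asserts ``less than half'' of the matrices have product $k+1$, deduces an expected product of at least $(3k+1)/2$, and stops there). Your ``softer finish'' at the end is the right one and is what the paper implicitly uses, so your worry about also needing a base case $t_m^{(k)}\geq (k+1)^{m-1}$ is unnecessary: once the ratio $t_{n+1}^{(k)}/t_n^{(k)}$ is bounded below by $k+1+\varepsilon$ for all large $n$, the quantity $t_n^{(k)}/(k+1)^{n-1}$ grows geometrically and eventually exceeds $1$, which is all the statement asks.

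The one step you flagged as delicate is indeed where your write-up breaks. The displayed recursion $v_{n+1}/t_{n+1}\geq 2kv_n/\bigl((k+1)t_n+2kv_n\bigr)$ implicitly uses $t_{n+1}\leq (k+1)t_n+2kv_n$, which is false: $v$-class matrices have diagonal products ranging up to $2^k$ (diagonal $(1,\dots,1,0,\dots,0)$), so $2k$ is a lower bound, not an upper bound, for their contribution. Also $v_{n+1}\geq 2kv_n$ overcounts slightly, since each $v$-class parent has exactly one child in the $u$-class (subtract everything), giving $(2k-1)v_n$. The repair is easier than your fixed-point analysis and also makes the paper's ``less than half'' claim precise: under the generating algorithm every size-$(n+1)$ matrix has a unique parent, and a parent contributes at most two children with minimal diagonal product (two if its own diagonal is a permutation of $(k,0,\dots,0)$, one otherwise) out of $\mathrm{dpro}\geq k+1$ children in total; summing over parents gives $u_{n+1}\leq \tfrac{2}{k+1}\,t_{n+1}$, hence $v_n\geq \tfrac{k-1}{k+1}\,t_n$ for all $n>k$. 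Plugging this into your inequality $t_{n+1}^{(k)}\geq (k+1)t_n^{(k)}+(k-1)v_n$ gives a ratio of at least $k+1+(k-1)^2/(k+1)>k+1$ for $k\geq 2$ (with $k=1$ holding by the exact equality $T(1,0^{n-1})=2^{n-1}$), and the telescoping finish then yields $N_k$. With that substitution your argument is correct and, in fact, tighter than the paper's sketch.
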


\begin{proof}
For a given $k$, the smallest possible diagonal product in $\mathcal{T}(1^k,0^{n-k} )$ is $(k+1)$. Using similar methods of generating the diagonals of the form $(0,0,\ldots,0,k)$, we can see that less than half of the terms in the set $\mathcal{T}(1^k,0^{n-k} )$ have a diagonal product of $(k+1)$. Hence, noting that the next lowest diagonal product is $2k$, the expected value of the diagonal product is {\it at least} $\sfrac{(3k+1)} {2}$. Since $\sfrac{(3k+1)} {(2k+2)} >1$  for  $k \geq 2$, we will eventually have an $N_k$ such that $T(1^k,0^{N_k-k} ) \geq (k+1)^{N_k-1}$
\end{proof}

\subsection{Conjectures and Future Work}

\noindent The sequence $\{T(1^n)\}$ appears in the OEIS \href{http://oeis.org/A008608}{A008608}. Based on the $25$ entries in this sequence, and the insight from Proposition \ref{prop:motivating_conj}, we make the following conjecture. 
\begin{conj}
Let $n,k \in \mathbb{Z}$ be such that $n \geq k \geq 11$. Then, we have $$T(1^k,0^{n-k}) \geq (k+1)^{n-1}$$ 
\end{conj}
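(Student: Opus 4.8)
The plan is to reduce the conjecture, by induction on $n$ with $k$ held fixed, to the single base case $n = k$, i.e. to the inequality $T(1^k) \geq (k+1)^{k-1}$. For the inductive step, recall the identity from Section \ref{tesler_generating_process} that $T(1^k, 0^{n-k+1}) = \sum_{A \in \mathcal{T}(1^k, 0^{n-k})} \text{dpro}(A)$, together with the elementary bound $\text{dpro}(A) \geq k+1$ valid for every $A \in \mathcal{T}(1^k, 0^{n-k})$: the main diagonal of such a matrix sums to $k$, so $\prod_i (d_i+1) \geq 1 + \sum_i d_i = k+1$. Hence $T(1^k, 0^{n-k+1}) \geq (k+1)\, T(1^k, 0^{n-k})$, and if $T(1^k, 0^{n-k}) \geq (k+1)^{n-1}$ then $T(1^k, 0^{n-k+1}) \geq (k+1)^{n}$. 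So the whole conjecture for a fixed $k$ follows from the base case $T(1^k) \geq (k+1)^{k-1}$.

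For $11 \leq k \leq 25$ the base case is a finite check against the $25$ known entries of OEIS \href{http://oeis.org/A008608}{A008608}: one verifies directly that $T(1^{11}) \geq 12^{10}$, $T(1^{12}) \geq 13^{11}$, and so on through $k=25$. The remaining and genuinely hard part is to prove $T(1^k) \geq (k+1)^{k-1}$ for all $k \geq 26$. For this I would bootstrap in $k$ using the companion identity $T(1^k) = \sum_{A \in \mathcal{T}(1^{k-1})} \text{dpro}(A)$ obtained from the same note with $\alpha = 1^{k-1}$: assuming inductively $T(1^{k-1}) \geq k^{k-2}$, it suffices to show that the \emph{average} diagonal product over $\mathcal{T}(1^{k-1})$ is at least $(k+1)^{k-1}/k^{k-2} = (k+1)\bigl(1+\tfrac1k\bigr)^{k-2} \approx e(k+1)$. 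One then partitions $\mathcal{T}(1^{k-1})$ by diagonal product: the matrices of minimal diagonal product $k$ are exactly those obtained from size $k-2$ by emptying the entire diagonal, so there are only $T(1^{k-2})$ of them (this is Proposition \ref{co:prop}(2) with $n = k-1$), and analogous identities bound the counts at the next few levels ($2(k-1)$, $3\cdot 2^{k-3}$, and so on, as in Proposition \ref{co:prop}(3)). Feeding these level counts back into $\sum_A \text{dpro}(A)$ should force the average up to the required $\Theta(k)$ multiple of the minimum.

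The main obstacle is precisely this last step: showing that the average diagonal product over $\mathcal{T}(1^{k-1})$ grows like a sufficiently large constant times $k$. Proposition \ref{prop:motivating_conj} already yields an average of at least $(3k+1)/2$, which beats $k+1$ but falls well short of $e(k+1)$, so the crude estimate ``at most half have the minimal diagonal product'' is not enough; one needs quantitative control of the entire low-order tail of the Armstrong polynomial $A_{k-1}(q)$, i.e. sharp upper bounds on $[q^{a}]A_{k-1}(q)$ for all small $a$, uniformly in $k$. Absent such control — which is exactly why the statement is posed as a conjecture rather than proved — the honest fallback is to prove the weaker assertion with $N_k$ replaced by the explicit threshold coming out of Proposition \ref{prop:motivating_conj}, and to record that the sharp threshold $N_k = k$ for $k \geq 11$ is consistent with, but not yet implied by, the available data.
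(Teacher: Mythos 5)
The statement you are proving is stated in the paper only as a conjecture; the paper offers no proof of it, and your proposal, as you yourself flag, does not close it either. What you do prove is correct and is a genuinely cleaner reduction than the paper makes explicit: since the diagonal of any $A\in\mathcal{T}(1^k,0^{n-k})$ sums to $k$, one indeed has $\text{dpro}(A)=\prod_i(d_i+1)\geq 1+\sum_i d_i=k+1$, and combined with the identity $T(1^k,0^{n-k+1})=\sum_{A\in\mathcal{T}(1^k,0^{n-k})}\text{dpro}(A)$ from Section \ref{tesler_generating_process} this gives $T(1^k,0^{n-k+1})\geq (k+1)\,T(1^k,0^{n-k})$, so induction on $n$ with $k$ fixed legitimately reduces the whole conjecture to the diagonal case $T(1^k)\geq(k+1)^{k-1}$ for each $k\geq 11$. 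The paper only records the implication in the other direction (that the conjecture would give $T(1^n)\geq(n+1)^{n-1}$ for $n\geq 11$), so observing that the two statements are in fact equivalent, modulo the finite check against the OEIS data for $11\leq k\leq 25$, is worthwhile.

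The genuine gap is exactly where you place it, and it is the same obstruction the paper itself records: for $k\geq 26$ you need $T(1^k)\geq(k+1)^{k-1}$, and your bootstrap in $k$ requires the average diagonal product over $\mathcal{T}(1^{k-1})$ to be at least roughly $e(k+1)$, whereas the only estimate available in the paper (the argument of Proposition \ref{prop:motivating_conj}) gives about $(3k+1)/2$, and Proposition \ref{co:prop} controls only isolated coefficients of the Armstrong polynomial, not its full low-degree tail uniformly in $k$. The remark following the conjecture in the paper states the same sufficient condition in the form $T(1^{n+1})\geq e\,(n+2)\,T(1^n)$, which is precisely your average-dpro requirement; neither formulation is proved, which is why the statement remains a conjecture. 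Your proposed fallback, proving the bound only beyond some unspecified threshold $N_k$, is Proposition \ref{prop:motivating_conj} itself and hence strictly weaker than the statement at hand, so it cannot be offered as a proof of this conjecture. In short: the reduction and the finite verification are sound, the diagnosis of the difficulty matches the paper's, but the claim for $k\geq 26$ is left open, so this is not a proof.
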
 

\begin{remark}
This conjecture would prove that for $n \geq 11$, we have $T(1^n) \geq (n+1)^{n-1}$ which is a significant because $(n+1)^{n-1}$ is the value of \eqref{tm_sum} with $t=1$ and $q=1$ (i.e. the dimension of $DH_n$). We note that for $k=11$, this conjecture is true as $T(1^{11})= 515,564,231,770$ which is bigger than $12^{10}$. Thus if we can show that $T(1^{n+1}) \geq e\cdot (n+2)T(1^n)$ for $k \geq 11$, then we have proven the conjecture. Here the number $e$ comes from looking at the fraction of the next term over the previous term which gives us $(\frac{n+2}{n+1})^{n-1}\cdot(n+2)$ where the first term is bounded below by $e$.
\end{remark}

The statistics dinv and area, which are mentioned in more detail in \protect\cite{JHAG1}, are used in the now settled Haglund-Loehr conjecture \protect\cite{JHAG3}. Carlsson and Mellit show in \protect\cite{ECAR} that 

\begin{equation}\label{eq:carlssonmellit}
\text{Hilb}(DH_n;q,t)= \sum\limits_\pi q^{\text{dinv}(\pi)}t^{\text{area}(\pi)}
\end{equation}

\noindent where the sum is over {\em parking functions} $\pi$ of size $n$. 

\noindent Haglund's Tesler matrix approach to showing \eqref{eq:carlssonmellit} reduces to proving that 
\begin{equation}\label{eq:hlconj}
\sum\limits_{\pi} q^{dinv(\pi)}t^{area(\pi)}=\sum\limits_{A=(a_{i,j})\in\mathcal{T}(1^n)} wt_{q,t}(A)
\end{equation}

\noindent where $\text{wt}_{q,t}(\cdot)$ is as in \eqref{eq:wt_def}. 

\medskip
\noindent It was shown in \protect\cite{AGHRS} that by plugging in $t=1$ and $q=1$ we get 
\begin{equation}\label{eq:pf_permtm}
(n+1)^{n-1} = \sum\limits_{A=(a_{i,j})\in \mathcal{PT}(1^n)} \prod\limits_{a_{i,j} > 0} [a_{i,j}]_{q,t}
\end{equation}

\noindent We note here that the only terms that survive on the RHS of \eqref{eq:hlconj} after plugging in $t=1$ and $q=1$ are Tesler matrices with exactly one nonzero element in each row. These are called {\it Permutation Tesler matrices}. This relationship between parking functions and Tesler matrices adds intrigue to having the number of parking functions eventually be a lower bound for Tesler matrices since this would imply there is a lot of cancellation in the alternating sum on the RHS of \eqref{eq:hlconj}. We will now explore a way to affirmatively answer Question \ref{question:pak} using $\chi(P(1^n);q)$. 
 
\begin{prop}
Let $\mu(\cdot)$ be the M\"{o}bius function for the Tesler poset $P(1^n)$. If for all $A \in \mathcal{T}(1^n)$ we have that $|\mu(\hat{0},A)| \leq f(n)$, then we have that:
\[ T(1^n) \geq \frac{2^{\binom{n}{2}}}{f(n)} \]
\end{prop}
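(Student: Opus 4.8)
The plan is to feed the now-proved value of the characteristic polynomial, $\chi(P(1^n);q)=(q-1)^{\binom n2}$ from Corollary~\ref{cor:settled_armconj}, into the defining M\"obius-weighted sum and evaluate it at a convenient point. Recall that the rank of $P(1^n)$ is $\sum_{i=1}^n(n-i)=\binom n2$, so by the definition of the characteristic polynomial
\[
\chi(P(1^n);q)=\sum_{A\in\mathcal T(1^n)}\mu(\hat0,A)\,q^{\binom n2-\rho(A)}=(q-1)^{\binom n2}.
\]

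First I would substitute $q=-1$. The right-hand side becomes $(-1-1)^{\binom n2}=(-2)^{\binom n2}$, which has absolute value $2^{\binom n2}$. Taking absolute values of the left-hand side and applying the triangle inequality gives
\[
2^{\binom n2}=\Bigl|\sum_{A\in\mathcal T(1^n)}\mu(\hat0,A)\,(-1)^{\binom n2-\rho(A)}\Bigr|\le\sum_{A\in\mathcal T(1^n)}|\mu(\hat0,A)|.
\]
Next I would invoke the hypothesis: since $|\mu(\hat0,A)|\le f(n)$ for every $A\in\mathcal T(1^n)$, the sum on the right is at most $f(n)\cdot|\mathcal T(1^n)|=f(n)\,T(1^n)$. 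Combining the two displays yields $f(n)\,T(1^n)\ge 2^{\binom n2}$, that is, $T(1^n)\ge 2^{\binom n2}/f(n)$, as claimed.

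There is no serious obstacle here beyond having the exact characteristic polynomial in hand; the whole argument is a single application of the triangle inequality once Corollary~\ref{cor:settled_armconj} is available — which is precisely why Armstrong's conjecture is relevant to Question~\ref{question:pak}. One can also extract a slightly more refined statement by comparing coefficients of $q^{\binom n2-j}$ on the two sides of the first display: this gives $\sum_{A:\,\rho(A)=j}\mu(\hat0,A)=(-1)^j\binom{\binom n2}{j}$ for each $j$, hence $\sum_{A:\,\rho(A)=j}|\mu(\hat0,A)|\ge\binom{\binom n2}{j}$, and summing over $0\le j\le\binom n2$ recovers $\sum_{A}|\mu(\hat0,A)|\ge 2^{\binom n2}$ while exhibiting exactly where the power of $2$ comes from.
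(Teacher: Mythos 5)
Your proof is correct and follows essentially the same route as the paper: both deduce $\sum_{A}|\mu(\hat{0},A)|\geq 2^{\binom{n}{2}}$ from Corollary \ref{cor:settled_armconj} and then apply the pointwise hypothesis $|\mu(\hat{0},A)|\leq f(n)$ to conclude $T(1^n)\cdot f(n)\geq 2^{\binom{n}{2}}$. The only difference is that you make explicit (via $q=-1$, or equivalently coefficient-by-coefficient comparison with $(q-1)^{\binom{n}{2}}$ and the triangle inequality) the step that the paper leaves implicit.
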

 
\begin{proof}
We note that by Corollary \ref{cor:settled_armconj}, we have $$\sum\limits_{A} |\mu(\hat{0},A)|  \geq 2^{\binom{n}{2}}$$ Hence, if for all $A \in \mathcal{T}(1^n)$ we have that $|\mu(\hat{0},A)| \leq f(n)$, then we would have that\newline $T(1^n) \cdot f(n) \geq 2^{\binom{n}{2}}$ which gives the desired result. 
\end{proof}
 
\begin{remark}
We would find such a bound on the  M\"{o}bius function for the Tesler poset $P(1^n)$ by analyzing the size of the equivalence classes that we get when we use Hallam-Sagan's method from Section \ref{subsec:hallam_sagan}. In their Lemma \ref{HS_method}, they show that the M\"{o}bius function of the equivalence class $[X]$ is equal to the sum of the M\"{o}bius function evaluated at the elements in the  equivalence class $[X]$.

\begin{conj}\label{conj:mobfunctionbound}
Let $\alpha=(1,1,\ldots,1)$ and $P(\alpha)$ be the Tesler poset with M\"{o}bius function $\mu(\cdot)$. Then we can have the following lower bound on the  M\"{o}bius function $$ |\mu(\hat{0},A)| \leq n! $$ 
\end{conj}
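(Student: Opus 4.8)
The plan is to prove the bound $|\mu(\hat 0, A)| \le n!$ by induction on $n$, tracking the Möbius function through the Hallam--Sagan construction used to prove Theorem \ref{generalized_theorem}. Recall that the Tesler poset $P(1^n)$ is built from $P(1^{n-1})$ (or rather from the intermediate posets $P(\alpha)$ as the binary word is filled in, starting from $a_n=(1,0,\dots,0,1)$ whose poset is $B_{n-1}$) by repeatedly forming the quotient $\sfrac{P(\alpha)\times B_{r-1}}{\sim}$. As noted in the remark preceding the conjecture, when a quotient is formed, the Möbius value $\mu(\hat 0,[X])$ of an equivalence class equals the sum $\sum_{x\in[X]}\mu_Q(\hat 0, x)$ of the Möbius values in the product poset $Q=P(\alpha)\times B_{r-1}$. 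Since $\mu_Q(\hat 0,(x,y)) = \mu_{P(\alpha)}(\hat 0,x)\cdot\mu_{B_{r-1}}(\hat 0,y)$ and the Boolean lattice has Möbius values $\pm 1$, we get the crude bound $|\mu(\hat 0,[X])| \le |[X]|\cdot \max_x |\mu_{P(\alpha)}(\hat 0,x)|$. So the whole problem reduces to controlling the sizes of the equivalence classes.

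First I would analyze the equivalence classes of $\sim$ from Definition \ref{defn:equiv}. An equivalence class of $(A,S)$ consists of all pairs $(A',S')\in P(\alpha)\times B_{r-1}$ with $A'+S' = A+S$ (with $S$ embedded in the lower-right $r\times r$ block as in Definition \ref{defn:equiv}). The key observation is that $S$ ranges over $\mathcal{T}(1,0^{r-1})$, which has exactly $2^{r-1}$ elements; and for a fixed target matrix $M = A+S \in P(\alpha+e_{n-r+1})$, the number of ways to write $M = A'+S'$ is bounded by the number of $S'\in\mathcal{T}(1,0^{r-1})$ that are "dominated" by $M$ in the lower-right block, which is at most $2^{r-1}$ but in fact, using the column-structure argument from Proposition \ref{boolean_lattice_bijection}, is controlled more tightly. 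Carrying this bound through the iteration of Lemma \ref{inductive_step_method} — at the step going from the word-length-$(j-1)$ poset to the word-length-$j$ poset we take a quotient by $\sim$ on a product with $B_{r-1}$ where $r = j$ — I would show the equivalence classes at stage $j$ have size at most $j$ (heuristically: the extra freedom introduced at each stage is "where in a chain of length $\le j$ does the change occur"), which telescopes to give $|\mu(\hat 0,A)|\le 1\cdot 2\cdot 3\cdots n = n!$ after $n$ stages — matching the conjectured bound and the known fact that $T(1,\ldots) \ge n!$.

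The main obstacle is making the bound on equivalence-class sizes precise and showing it is genuinely $\le j$ at stage $j$ rather than something like $2^{j-1}$; the naive count of $S'\in\mathcal T(1,0^{r-1})$ gives $2^{r-1}$, which would only yield $|\mu(\hat 0,A)| \le 2^{\binom n2}$ — useless, since that is exactly $\sum_A|\mu(\hat 0,A)|$. The refinement must use that $A'$ is constrained to lie in $P(\alpha)$ (a genuinely smaller poset, not all of $U_n$), so that not every $S'$ dominated by $M$ actually arises with a valid $A' = M - S' \in \mathcal{T}(\alpha)$; Lemmas \ref{first_sum_lemma} and \ref{second_sum_lemma} already show such incompatibilities exist and I expect a strengthening of their argument — ruling out chains of incomparable relative maxima in a lower order ideal — to pin the class size down to $\le j$. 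An alternative, more computational fallback would be to bound $|\mu(\hat 0,A)|$ directly by $\mathrm{dpro}(A_{\max})$-type quantities or by the number of maximal chains from $\hat 0$ to $A$, then bound that by $n!$ using the rank formula $\rho(P(1^n))=\binom n2$ together with the branching structure of the cover relation; but the quotient-size approach is cleaner and ties directly into the machinery already developed, so I would pursue it first.
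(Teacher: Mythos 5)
You should first be aware that this statement is not a theorem of the paper: it is stated as an open conjecture, verified there only computationally up to size $5$, so there is no proof in the paper to match your outline against. Judged on its own terms, your plan has a genuine gap at exactly the point you flag, and the gap is worse than you suggest. The equivalence class of a target matrix $M\in\mathcal{T}(\alpha+e_{n-r+1})$ under the relation of Definition \ref{defn:equiv} consists of all pairs $(M-S',S')$ with $S'\in\mathcal{T}(1,0^{r-1})$ (embedded in the lower-right block) entrywise dominated by $M$. Your hoped-for refinement --- that the constraint $A'=M-S'\in\mathcal{T}(\alpha)$ rules out most dominated $S'$ --- cannot work: the embedded $S'$ has hook-sum vector exactly $e_{n-r+1}$, so for \emph{every} dominated $S'$ the difference $M-S'$ automatically has hook-sum vector $\alpha$ and hence lies in $\mathcal{T}(\alpha)$. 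The class size is therefore exactly the number of $0$--$1$ "path" matrices in $\mathcal{T}(1,0^{r-1})$ whose support sits inside the positive support of the block of $M$, i.e. the number of paths through a DAG determined by the nonzero entries of $M$, and this can grow much faster than $r$ (rows of a Tesler matrix can have several positive entries, so the branching compounds). Lemmas \ref{first_sum_lemma} and \ref{second_sum_lemma} exclude only two very particular elements from sharing a class and give no bound on class sizes, so the claimed stage-$j$ bound of $j$ is unsupported, and without it your telescoping gives nothing better than the $2^{\binom{n}{2}}$-type bound you yourself note is useless.

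The real content of the conjecture is that there is massive \emph{signed} cancellation: by Lemma \ref{HS_method} the M\"{o}bius value of a class is the sum of products $\mu_{P(\alpha)}(\hat 0,A')\cdot(\pm 1)$ over the class, and a purely size-based (triangle-inequality) estimate discards exactly the cancellation one would need. So any successful version of your approach must track signs within classes, not just cardinalities; that is an open problem, not a technical refinement of the existing lemmas. Your fallback is also not viable as stated: bounding $|\mu(\hat 0,A)|$ by the number of chains from $\hat 0$ to $A$ (Philip Hall) gives quantities that far exceed $n!$ once the rank $\binom{n}{2}$ is large, so it cannot recover the conjectured bound.
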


We have been able to computationally able to verify Conjecture \ref{conj:mobfunctionbound} in the Tesler poset corresponding to hook sum vectors $(1,1,\ldots,1)$ up size $5$. A proof of this conjecture would give an affirmative answer to Question \ref{question:pak} since $$T(1^n) \geq \frac{2^{\binom{n}{2}}}{n!} = e^{\Theta(n^2)} $$
\end{remark}

\section{Appendix} 
\begin{figure}[H]
\centering
\includegraphics[width=150mm, scale=1] {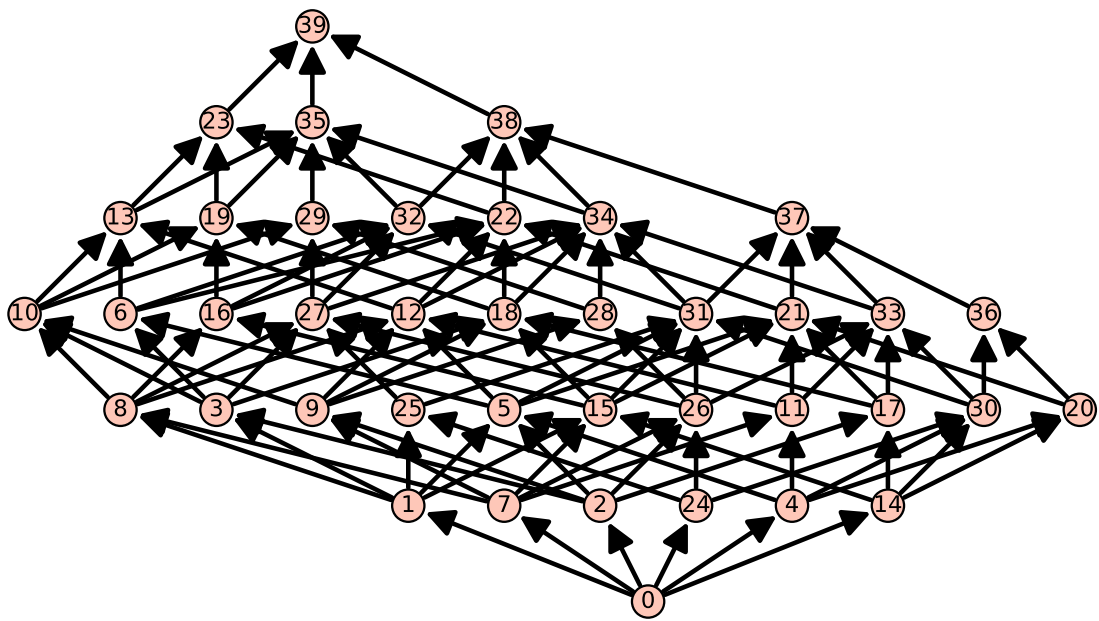}
\caption{The Tesler poset corresponding to hook sum vector $(1,1,1,1)$}
\label{tesler4poset.pdf}
\end{figure}

\begin{figure}[H]
\centering
\includegraphics[width=30mm, scale=.75] {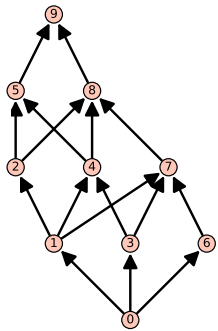}
\caption{The Tesler poset corresponding to hook sum vector $(1,2,3)$}
\label{tesler121poset.pdf}
\end{figure}

\section{Acknowledgements} 
I would like to thank Alejandro H. Morales for mentoring me during my REU, proposing the project, numerous edits in the process of writing this paper, and for suggesting the method of Hallam and Sagan in Section \ref{subsec:hallam_sagan}. I would also like to thank Igor Pak for supporting the REU and his question regarding the number of Tesler matrices and Drew Armstrong for conjecture \ref{conj:armstrong} and for the method of generating Tesler matrices in Section \ref{tesler_generating_process} in conversations with Alejandro H. Morales. Lastly, I want to thank the UCLA Math Department and the private donors to the VIGRE Pure Math REU for providing me with an Undergraduate Research experience. 

\newpage

\end{document}